\crefname{subsection}{§}{§§}
\Crefname{subsection}{§}{§§}
\newcommand{\nwc}{\newcommand}
\nwc{\aaa}{\mathcal{F}}
\nwc{\aap}{\mathcal{F}_{P}}
\nwc{\al}{\alpha}
\nwc{\C}{\mathbb{C}}
\nwc{\cb}{\overline{C}}
\nwc{\ccc}{\mathfrak{c}}
\nwc{\ch}{\widehat{C}}
\nwc{\cin}{\textbf{(v)}}
\nwc{\cl}{C'}
\nwc{\cp}{\mathcal{C}_{P}}
\nwc{\cpll}{\mathfrak{c}_{P'}}
\nwc{\ct}{\widetilde{C}}
\nwc{\dd}{\mathcal{L}}
\nwc{\ddd}{\mathfrak{d}}
\nwc{\ddl}{\mathcal{L}'}
\nwc{\dlp}{\delta_{P}}
\nwc{\doi}{\textbf{(ii)}}
\nwc{\fl}{\flushleft}
\nwc{\fff}{\mathcal{F}}
\nwc{\ffp}{\mathcal{F}_{P}}
\nwc{\ffq}{\mathcal{F}_{Q}}
\nwc{\ffl}{\mathcal{F}'}
\nwc{\G}{\mathcal{G}}
\nwc{\Ga}{\Gamma}
\nwc{\gtl}{\widetilde{g}}
\nwc{\hra}{\hookrightarrow}
\nwc{\hua}{h^{1}(C,\aaa )}
\nwc{\kk}{{\rm K}}
\nwc{\llb}{\mathcal{L}}
\nwc{\mb}{\mathbb}
\nwc{\mc}{\mathcal}
\nwc{\mm}{\mathfrak{m}}
\nwc{\mmp}{\mathfrak{m}_{P}}
\nwc{\mpd}{\mathfrak{m}_{P}^{2}}
\nwc{\nn}{\mathbb{N}}
\nwc{\ob}{\overline{\mathcal{O}}}
\nwc{\obr}{\mathcal{O}^*}
\nwc{\obp}{\overline{\mathcal{O}}_P}
\nwc{\och}{\mathcal{O}_{\hat{C}}}
\nwc{\oh}{\hat{\mathcal{O}}}
\nwc{\ohp}{\hat{\mathcal{O}}_{P}}
\nwc{\ol}{\mathcal{O}'}
\nwc{\oma}{\Omega (\mathfrak{a})}
\nwc{\omo}{\Omega (\mathcal{O})}
\nwc{\oo}{\mathcal{O}}
\nwc{\op}{\mathcal{O}_P}
\nwc{\opc}{\mathcal{O}_{P,C}}
\nwc{\oph}{\hat{\mathcal{O}}_{P}}
\nwc{\opl}{\mathcal{O}_{P}'}
\nwc{\oplc}{\mathcal{O}_{P,C}'}
\nwc{\opll}{\mathcal{O}_{P'}}
\nwc{\opt}{\tilde{\mathcal{O}}_{P}}
\nwc{\optt}{{\mathcal{O}}_{\tilde{P}}}
\nwc{\oq}{\mathcal{O}_{Q}}
\nwc{\oqt}{\tilde{\mathcal{O}}_{Q}}
\nwc{\ot}{\widetilde{\mathcal{O}}}
\nwc{\overop}{\bar{\oo}_{P}}
\nwc{\pb}{\overline{P}}
\nwc{\pbb}{P^*}
\nwc{\pbi}{\overline{P_{i}}}
\nwc{\pbr}{\overline{P_{r}}}
\nwc{\pgmd}{\mathbb{P}^{g+2}}
\nwc{\pgmu}{\mathbb{P}^{g+1}}
\nwc{\ph}{\hat{P}}
\nwc{\pp}{\mathbb{P}}
\nwc{\prv}{\noindent\textbook{Proof}:}
\nwc{\pt}{\widetilde{P}}
\nwc{\ptl}{\tilde{P}}
\nwc{\pum}{\mathbb{P}^{1}}
\nwc{\qh}{\hat{Q}}
\nwc{\qtl}{\tilde{Q}}
\nwc{\qua}{\textbf{(iv)}}
\nwc{\ra}{\rightarrow}
\nwc{\rh}{\hat{R}}
\nwc{\sei}{\textbf{(vi)}}
\nwc{\sep}{\beq\ast\ \ast\ \ast\enq}
\nwc{\sig}{\sigma}
\nwc{\Sig}{\Sigma}
\nwc{\ssp}{S_{P}}
\nwc{\sss}{{\rm S}}
\nwc{\tre}{\textbf{(iii)}}
\nwc{\um}{\textbf{(i)}}
\nwc{\vpb}{v_{\overline{P}}}
\nwc{\vtxp}{\widetilde{V}_{x,P}}
\nwc{\vxp}{V_{x,P}}
\let \wt=\widetilde
\let \mc=\mathcal
\nwc{\wh}{\hat{\omega}}
\nwc{\whp}{\hat{\omega}_{P}}
\nwc{\woch}{\omega\cdot\mathcal{O}_{\hat{C}}}
\nwc{\woh}{\omega\cdot\hat{\mathcal{O}}}
\nwc{\ww}{\omega}
\nwc{\wwb}{\omega^*}
\nwc{\wwct}{\omega _{\widetilde{C}}}
\nwc{\wwh}{\widehat{\omega}}
\nwc{\wwhp}{\widehat{\omega}_P}
\nwc{\wwp}{\omega _{P}}
\nwc{\wwt}{\widetilde{\omega}}
\nwc{\wwtp}{\widetilde{\omega}_P}
\nwc{\zz}{\mathbb{Z}}
\newtheorem{coro}{Corollary}[section]
\newtheorem{dfn}[coro]{Definition}
\newtheorem{lemma}[coro]{Lemma}
\newtheorem{prop}[coro]{Proposition}
\newtheorem{rem}[coro]{Remark}
\newtheorem{thm}[coro]{Theorem}
\newtheorem{conj}[coro]{Conjecture}
\let \fl=\flushleft
\let \ga=\gamma
\let \sub=\subset
\let \be=\beta
\let \al=\alpha
\let \pr=\prime
\let \la=\lambda
\let \mf=\mathfrak
\let \ov=\overline
\title[Arithmetic inflection formulae]{Arithmetic inflection formulae for linear series on hyperelliptic curves}
\title{Arithmetic inflection formulae for linear series on hyperelliptic curves}
\author{Ethan Cotterill}
\address{Instituto de Matem\'atica, UFF, Rua Prof Waldemar de Freitas, S/N,
24.210-201 Niter\'oi RJ, Brazil}
\email{cotterill.ethan@gmail.com}
\author{Ignacio Darago}
\address{Department of Mathematics, University of Chicago, 5734 S. University Avenue, Chicago, IL 60637}
\email{idarago@math.uchicago.edu}
\author{Changho Han}
\address{Department of Mathematics, University of Georgia, Athens, GA 30602}
\email{Changho.Han@uga.edu}
\begin{document}

\begin{abstract}
Over the complex numbers, {\it Pl\"ucker's formula} computes the number of inflection points of a linear series of fixed degree and projective dimension on an algebraic curve of fixed genus. Here we explore the geometric meaning of a natural analogue of Pl\"ucker's formula and its constituent local indices in $\mb{A}^1$-homotopy theory for certain linear series on hyperelliptic curves defined over an arbitrary field.
\end{abstract}

\maketitle                   





\tableofcontents  
\section{Introduction}\label{intro}
\subsection{Inflection of linear series on algebraic curves}
The {\it inflectionary index} of a linear series of rank $r$ and degree $d$ in a point $p$ of an algebraic curve, as measured by the total deviation of the vanishing orders of global sections in $p$ from the generic sequence $(0,1,\dots,r)$, is 
a fundamental local invariant. Over the complex numbers, a classical formula of Pl\"ucker expresses the sum of inflectionary indices as $p$ varies, as an explicit polynomial in $r$, $d$, and the genus $g$ of the underlying curve. Pl\"ucker's inflection formula plays a foundational role in the Eisenbud and Harris' reworking of the {\it Brill--Noether theorem} for general curves of genus $g$; see \cite[Prop. 1.1]{EH}. Pl\"ucker's formula notwithstanding, the behavior of the individual inflectionary indices that contribute to the inflection of a given series are quite mysterious, even over $\mb{C}$; indeed, the stratification of curves in moduli according to the local inflectionary behavior of their canonical series is ongoing \cite{EH2,P}. If we furthermore replace $\mb{C}$ by a non-algebraically closed field, it is natural to wonder whether an analogue of Pl\"ucker's formula is still operative, and how we may interpret its constituent inflectionary indices. The answer to the first question is often ``yes", and $\mb{A}^1$-homotopy theory provides the tools both to produce a conserved global quantity \`a la Pl\"ucker and to interpret its local contributions.

\subsection{\texorpdfstring{The $\mb{A}^1$}{A^1}-homotopy machine}
$\mb{A}^1$-homotopy theory was originally developed by Morel, Voevodsky and others as a purely algebraic approach to homotopy theory modeled on Grothendieck's approach to algebraic geometry. Recently, J. Kass and K. Wickelgren, M. Levine, and M. Wendt have applied $\mb{A}^1$-homotopy theory to spectacular effect to investigate enumerative algebraic geometry over fields $F$ other than $\mb{C}$. In this paper, we will apply the Kass--Wickelgren program, as developed in \cite{KW1,KW2}, to investigate the inflection of linear series on hyperelliptic curves. To this end, we will exhibit inflection as the Euler class of a vector bundle $\mc{E}$, and then show that $\mc{E}$ is (relatively) orientable. These conditions imply that a well-defined {\it $\mb{A}^1$-inflection class} exists in the Grothendieck--Witt group of $F$, which we review below.

\subsection{Inflection as an Euler class}\label{subsec:inf_Euler}
Let $X$ denote a smooth projective curve of genus $g$. A $g^r_d$ on $X$ is a linear series comprised of a degree-$d$ line bundle $L$, together with an $(r+1)$-dimensional subspace of sections $V \subset H^0(X,L)$. By the {\it inflection divisor} of $(L,V)$ on $X$ we mean the nonsurjective locus of the natural evaluation morphism
\begin{equation}\label{evaluation_morphism}
V \otimes \mathcal{O} \ra J^{r+1}(L)
\end{equation}
where $J^{r+1}(L)$ is the principal parts bundle with fibers $H^0(L/L(-(r+1)p))$, $p \in X$. Taking top exterior powers in \eqref{evaluation_morphism}, the inflection divisor is precisely the zero locus of a nonvanishing section $s \in H^0(\mc{E})$, where $\mc{E}:=\det J^{r+1}(L)$. 
Thus the class of this divisor is $e(\mc{E})$, the Euler class (i.e., top Chern class) of $\mc{E}$. A fundamental fact (see Proposition~\ref{prop:jet_exact_seq}) is that for every positive integer $k$, $J^{k+1}(L)$ and $J^k(L)$ are related by an exact sequence, namely
\begin{equation}\label{eq:jet_exact_seq}
0 \ra L \otimes K_X^{\otimes k} \ra J^{k+1}(L) \ra J^k(L) \ra 0
\end{equation}
where $K_X$ is the canonical bundle of $X$; it follows that $\mc{E}=L^{\otimes (r+1)} \otimes K_X^{\otimes \binom{r+1}{2}}$.

\subsection{Relative orientability}\label{subsec:Rel_Orient}
Recall from \cite{KW2} that a vector bundle $\mc{F}$ on an algebraic variety $Y$ is {\it relatively orientable} when $\mc{H} := \mc{H}\mathrm{om}(\det T_Y,\det \mc{F})$ is isomorphic to the tensor square $\mc{L}^{\otimes 2}$ of some line bundle $\mc{L}$ on $Y$. 
When $Y=X$ is a curve and $\mc{F}=\mc{E}$ is as above, the corresponding hom-bundle $\mc{H}$ is isomorphic to $L^{\otimes (r+1)} \otimes K_X^{\otimes (\binom{r+1}{2}+1)}$. In particular, $\mc{H}$ is a line bundle of degree $(r+1)(d+r(g-1))+ 2g-2$. When $X$ is a {\it complex} curve, multiplication by any fixed integer defines an isogeny of the Jacobian of $X$, and
it follows that $\mc{F}$ is relatively orientable if and only if $\phi(d,g,r):=(r+1)(d+r(g-1))$ is an even number. For example, when $X=E$ is of genus 1, every complete linear series of degree $d$ is of rank $r=d-1$, and correspondingly $\phi$ is always even; so $\mc{H}$ is always relatively orientable over $\mb{C}$. Over an elliptic curve $E=(E,0_E)$ defined over a general base field $F$ not of characteristic 2, deciding whether $\mc{F}$ is relatively orientable may in principle be decided by checking that $\deg \mc{H}$ is even and by applying the ``divisibility by 2'' criterion of \cite[Thm. 2.1]{BZ} for a $F$-rational point $P$ of $E$, where $P$ is the unique point for which $\mc{H} \cong \mc{O}((\deg \mc{H}-1) \cdot 0_E) \otimes \mc{O}(P)$.\footnote{Here we identify the elliptic curve $E$ as a group with its Jacobian via $P \mapsto \mc{O}(P-0_E)$, so the equation $P=2Q$ amounts to the statement that $\mc{O}(P-0_E) \cong \mc{O}(Q-0_E)^{\otimes 2}$ as line bundles on $E$.}
On the other hand, for an arbitrary choice of complete linear series of degree $d \geq 2g-1$ on a smooth curve of genus $g$ greater than 1 we have $r=d-g$, and correspondingly $\phi=g(d-g+1)^2$, which is odd when $d$ and $g$ are odd, and even otherwise.

\subsection{Inflection of linear series on hyperelliptic curves}\label{subsec:generalized_torsion}
In this paper, the primary situation of interest will be that in which $X$ is {\it hyperelliptic} and $L$ is the line bundle obtained from any multiple of the $g^1_2$ on $X$. Relative orientability over an arbitrary base field is then trivial, as the hyperelliptic projection $\pi: X \ra \mb{P}^1$ is of degree 2 and both $L$ and $K_X$ are pullbacks of line bundles on $\mb{P}^1$. Note that when $X=E$ is of genus {\it one}, inflection points of any complete linear series of degree $N$ are in bijection with $N$-torsion points on $E$; so the study of inflection on $X$ generalizes the study of torsion on $E$. The analogy between inflection and torsion is itself not new (see, e.g., \cite{Si}), though to our knowledge it hasn't been examined systematically through the lens of $\mb{A}^1$-homotopy theory.

\medskip
Our strategy for calculating inflection on a given hyperelliptic curve is a two-step process, which involves separately computing contributions arising from the hyperelliptic ramification locus and its complement. Roughly speaking, inflection that arises from the ramification locus is constant in moduli, while inflection supported away from the ramification locus varies nontrivially when the underlying (marked) curve moves. This in turn leads naturally to what we call {\it inflectionary varieties} associated to any given flat family of linear series on marked hyperelliptic curves, whose points parameterize those inflection points supported away from the ramification locus. They are analogues of modular curves $X_1(N)$ that parameterize pairs $((E,0),p)$ of elliptic curves $(E,0)$ together with $N$-torsion points $p \in E[N]$.
The study of rational points on $X_1(N)$ is a classical and enduring area of inquiry; here we initiate an experimental study of rational points on inflectionary curves derived from Legendre and Weierstrass pencils of elliptic curves.

\subsection{The Grothendieck--Witt group}\label{subsec:GW}
We will be interested in the class of $e(\mc{E})$ in the Grothendieck--Witt group ${\rm GW}(F)$ of an arbitrary field $F$. Here ${\rm GW}(F)$ is the (additive) group completion of the monoid of symmetric nondegenerate bilinear forms. It is worth recalling here that ${\rm GW}(F)$ has an explicit presentation in terms of generators and relations; see \cite{Lam}. We use $\langle a \rangle$ to denote the class of $a \in F$. The group ${\rm GW}(F)$ contains a distinguished {\it hyperbolic form} $\mb{H}:=\langle 1 \rangle + \langle -1 \rangle$.

\medskip
In the classical situation, we have ${\rm GW}(\mb{C}) \cong \mb{Z}$, which reflects the fact that any quadratic form over the complex numbers is determined up to isomorphism by its rank. 
The nonclassical situations of primary interest to us in the sequel will be $F=\mb{R}$ and $F=\mb{F}_q$, in which $q=p^n$ is a finite prime power. We note that ${\rm GW}(\mb{R}) \cong \mb{Z}^2$ (resp., ${\rm GW}(\mb{F}_q) \cong \mb{Z} \times \mb{F}_q^{\ast}/(\mb{F}_q^{\ast})^2$); indeed, quadratic forms over the real numbers (resp., over a finite field) are characterized by rank and signature (resp., rank and discriminant, modulo squares).

\subsection{Enriched Euler classes and local indices}\label{subsec:loc_glob_Euler}
According to \cite[Thm 1.1]{BW}, the class in ${\rm GW}(F)$ of the Euler class $e(\mc{E})$ may be recovered as a sum 
\[
e(\mc{E})=\sum_{p: s(p)=0}\mathrm{ind}_p(s)
\]
of local Euler indices $\mathrm{ind}_p(s)$ over (all points of) the vanishing locus of the section $s \in H^0(\mc{E})$ described above, provided $s$ has isolated zeroes. In particular, $e(\mc{E})$ is independent of the particular section $s$ chosen. It will turn out that our global Euler classes are themselves uninteresting inasmuch as they are uniform, whereas the local indices reflect the features of our particular choice of base field $F$.

\subsection{Calculating local Euler indices via local coordinates and trivializations}\label{subsec:compute_loc_Euler}
Computing the local Euler indices $\mathrm{ind}_p(s)$ that contribute to $e(\mc{E})$, in turn, is a three-step procedure. We begin by calculating a {\it local Wronskian} expression for the determinant of \eqref{evaluation_morphism} in an \'etale chart of the inflection point $p$ in question. For $\mb{A}^1$-homotopy theory, we need to work in the more refined Nisnevich topology, however, so in a second step we rewrite our local Wronskian in terms of a Nisnevich uniformizer, using standard facts about how Wronskians transform under changes of coordinates. In practice, the \'etale charts arise from projections to the coordinate axes, while the Nisnevich 
charts are associated with generic projections. Finally, we apply a linear algebraic result originally due to Scheja and Storch (we follow \cite{KW3}) to extract $\mathrm{ind}_p(s)$ from our Nisnevich local Wronskian.
The output of this procedure is a {\it trace} of a certain class in ${\rm GW}(k(p))$, where $k(p)$ is the splitting field of $p$ and the trace is canonically induced by the usual field trace of $k(p)$ over $F$.

\medskip
In this paper, we apply this procedure to explicitly compute local Euler indices $\mathrm{ind}_p(s)$ of inflection points $p$ of complete linear series $|2\ell\infty_X|$ on an odd hyperelliptic curve $X$. We assume that the characteristic of the base field $F$ is not 2, which ensures that $X$ admits an affine model of the form $y^2=f(x)$.
We assume, furthermore, that $F$ is perfect and that either (1) $\ell \le g$ and $\ell \equiv 1 \pmod{4}$; or else (2) $\ell > g$ and $2\ell-g \equiv 1 \pmod{4}$, in order to guarantee that the generic projections mentioned above are compatible with our choice of relative orientation in section~\ref{subsec:Rel_Orient} (see Remarks~\ref{rmk:perfect_field} and \ref{rmk:compatibility}). With these assumptions in mind, we break the local Euler index computation based on whether the hyperelliptic projection $\pi: X \rightarrow \mb{P}^1$ is ramified or split over $p$; see Remark~\ref{rmk:infl_poly_cond} for a discussion of the difficulties that arise in the inert case. Our Theorems~\ref{thm:euler_index_ramif_l_leq_g}, \ref{thm:euler_index_ramif_l>g}, \ref{thm:infl_conc_ramif_infty} and \ref{thm:infl_conc_ramif_infty_2} deal with the case in which $\pi$ is ramified over $p$, assuming that determinants of certain Gessel--Viennot-type matrices $M(\ell,g)$ are not divisible by the characteristic of $F$; see Remark~\ref{rem:Gessel--Viennot}. Theorem~\ref{thm:local_Euler_index_infl_poly} deals with the case in which $\pi$ is split over $p$. 


\subsection{Inflection away from the hyperelliptic ramification locus}\label{subsec:infl_general}
To calculate inflectionary indices {\it away} from the hyperelliptic ramification locus $R_{\pi}$, we introduce an additional computational device: the {\it inflection polynomial} $P_{g,\ell}(x)$, whose roots parameterize the $x$-coordinates over $\ov{F}$ of inflection points of $|2\ell \infty_X|$ supported on the complement of $R_{\pi}$. It is defined by a functional equation \eqref{hasse_inflection_poly}, and is a determinant in {\it atomic inflection polynomials} $P_{g,g+1}(x)$ 
that may be generated recursively; see equation~\eqref{eq:reduced_recursion}. In Theorem~\ref{thm:local_Euler_index_infl_poly}, we derive an expression for the $\mb{A}^1$-inflectionary index in a point $(\ga, \pm \sqrt{f(\ga)})$ of $X$, under the assumption that $f(\ga)$ is a quadratic residue in the residue field $k(\ga)$; see Remark~\ref{rmk:infl_poly_cond}. A crucial point is that any flat family of hyperelliptic curves $X_{\la}$ over a $d$-dimensional base defines a $d$-dimensional {\it inflectionary variety} cut out by the inflection polynomial $P_{g,\ell}(x)$, which now varies with the modular parameters $\la$. In this paper, we study the properties of atomic inflectionary {\it curves} derived from Weierstrass pencils of elliptic curves, and in so doing we are led to Conjecture~\ref{atomic_inflectionary_curves}, which makes a precise prediction about the number of $\mb{R}$-rational points of the atomic Weierstrass inflectionary curve as a function of the underlying pencil parameter.

\subsection{Outline}

The paper following this introduction is organized as follows.
In \autoref{sec:blueprint} we detail our strategy for computing arithmetic inflection of arbitrary multiples of the $g^1_2$ on $X$. We begin by reviewing linear series on hyperelliptic curves, including their explicit presentations in terms of coordinates, and their associated inflection divisors. We then review {\it Nisnevich} coordinate charts, as these are essential for calculating local Euler indices. Lemma~\ref{lem:locEuler} is a now-standard general result that characterizes the local Euler index at an isolated zero of arbitrary multiplicity. Theorem~\ref{thm:global_Euler_class} and its Corollary~\ref{thm:global_Euler_class_inflection} together give a Pl\"ucker-type formula for the class of the inflection locus of an arbitrary multiple $\ell g^1_2$ of the $g^1_2$ on a hyperelliptic curve. Subsections~\ref{subsec:toric_geometry} and \ref{subsec:nisnevich_coords_and_projections} are devoted to an in-depth discussion of the particular Nisnevich coordinate charts we use in calculating those local Euler indices that contribute to the global inflection class; we construct these using toric geometry and linear projections. In the concluding subsection~\ref{subsec:numerology} we account for the various numerical hypotheses we make throughout the paper, and their significance.

\medskip
Section~\ref{sec:infl} is the technical core of this paper, in which we compute local Euler multiplicities at points of the ramification locus $R_{\pi}$ of the hyperelliptic projection $\pi$. We use 
{\it Hasse} derivatives, as these behave better than usual derivatives in positive characteristic, to compute local Wronskians. Subsections~\ref{subsec:Hasse_Witt} and \ref{subsec:infl_jet} recapitulate their basic theory, as well as the bundle of principal parts $J^{r+1}(L)$ associated to a $g^r_d$ using these. For lack of a suitable reference, we construct $J^{r+1}(L)$ from scratch using local trivializations, and we prove Proposition~\ref{prop:jet_exact_seq}, which we use in Corollary~\ref{thm:global_Euler_class_inflection}. In subsection~\ref{subsec:Hasse_Wronskian} we then construct the Hasse Wronskians required for our calculation of local Euler indices. Theorems~\ref{thm:infl_conc_ramif_1} and \ref{thm:infl_conc_ramif_infty} (resp., Theorems~\ref{thm:infl_conc_ramif} and \ref{thm:euler_index_ramif_l_leq_g_infty}) give explicit formulae for the Hasse Wronskian in terms of a Nisnevich (resp., \'etale) uniformizer at a point of $R_{\pi}$ whenever $\ell \leq g$ (resp., $\ell>g$), which we subsequently promote in Theorems~\ref{thm:euler_index_ramif_l_leq_g} and \ref{thm:infl_conc_ramif_infty_2} (resp., Theorems~\ref{thm:euler_index_ramif_l>g} and \ref{thm:euler_index_ramif_l>g_infty}) to explicit expressions for the corresponding local Euler indices; when $\ell>g$, ``explicit" means a combinatorial count of nonintersecting lattice paths in the plane. In subsection~\ref{subsec:hasse_inflection_polys}, we generalize (and redefine) the {\it inflection polynomials} studied in \cite{CG} in terms of Hasse derivatives; their roots parameterize the $x$-coordinates of $\overline{F}$-inflection points away from $R_{\pi}$ whenever $\ell>g$, and from them it is straightforward to read off the corresponding local Euler indices. The case $\ell=g+1$ is distinguished in that the corresponding {\it atomic} inflection polynomials are prescribed recursively (Proposition~\ref{prop:recursion}), while general inflection polynomials are determinants in atomic inflection polynomials (Proposition~\ref{inflection_poly_determinantal_presentation}).

\medskip
Finally, in \autoref{sec:geometric_interpretations}, we give concrete interpretations of our formulas for local Euler indices over $\mb{R}$, $\mb{F}_q$, and $\mb{C}(\!(t)\!)$; and we study inflectionary curves (defined by inflectionary polynomials) derived from Lefschetz and Weierstrass pencils of elliptic curves. In Conjecture~\ref{atomic_inflectionary_curves} (resp., Proposition~\ref{atomic_inflectionary_curves_over_Fq}) we make some explicit speculations regarding the $F$-rationality loci of these curves when $F=\mb{R}$ (resp., $F=\mb{F}_q$).

\subsection{Conventions} Hereafter $\mb{N}$ denotes the natural numbers, including zero; while $\ov{\mb{N}}:=\mb{N} \cup \{\infty\}$.

\section{A blueprint for arithmetic inflection on a hyperelliptic curve}\label{sec:blueprint}

\subsection{Linear series on hyperelliptic curves}\label{subsec:lin_series_hypell}
Let $X$ denote a smooth hyperelliptic curve of genus $g$, defined over a base field $F$ of characteristic not equal to 2. By definition, $X$ comes equipped with a degree-2 morphism $\pi: X \ra \mb{P}^1$. It is convenient to assume the projection $\pi$ is ramified over $\infty \in \mb{P}^1$, i.e., that $X$ is an {\it odd} hyperelliptic curve; away from $\infty$, then, $X$ is given by an affine equation
\begin{equation}\label{affine_hyperelliptic_equation}
y^2= f(x)
\end{equation}
in which $\deg_x(f)=2g+1$ and $\pi$ becomes the projection $(x,y) \mapsto x$. As a matter of notation, we also set $\infty_X:= \pi^{-1}(\infty)$.

\medskip
Algebraic morphisms from $X$ to $\mb{P}^r$ of degree $d$ are specified by linear series $(L,V)$, where $\deg(L)=d$ and $V \sub H^0(X,L)$ is an $(r+1)$-dimensional subspace of global sections; in classical notation, this data comprises a $g^r_d$. In this paper, we study the inflection of {\it complete} linear series, i.e., those of the form $V=H^0(X,\mc{O}(\ell \pi^* \infty))$, where $\ell$ is a positive integer, and of certain distinguished subseries arising from the degeneration-based analysis over $\mb{C}$ and $\mb{R}$ carried out in \cite{BCG}. 

\medskip
The line bundle $L=\mc{O}(\ell \pi^* \infty)= \mc{O}(2\ell \infty_X)$ is then relatively orientable, and it has a monomial basis of global sections given by
\begin{equation}\label{eq:monomial_basis}
(1,x,\dots,x^{\ell};y, xy, \dots, x^{\ell-g-1}y)
\end{equation}
in which we suppress monomials divisible by $y$ whenever $\ell \leq g$.

\medskip
Associated with every $g^r_d$ on $X$ there is a natural evaluation morphism \eqref{evaluation_morphism} given by expanding sections locally as Taylor series to order $r$; and when $F$ is of characteristic {\it zero}, there are finitely many {\it inflection} points $p \in X$ along which the evaluation morphism fails to be surjective.\footnote{Strictly speaking, the inflection point $p$ might only be defined over $\ov{F}$.} In positive characteristic, ensuring that the inflection locus is finite and/or easily computable may require additional hypotheses; here our basic policy will be to impose these hypotheses as the need arises. 

\medskip
We now describe in more detail how an inflectionary divisor is defined as a vanishing locus of a section of the determinant $\det(J^{r+1}(L))$ of the corresponding bundle $J^{r+1}(L)$ of principal parts in \eqref{evaluation_morphism}, which is essential for computing local inflectionary indices (see \S \ref{subsec:infl_jet} and \S \ref{subsec:Hasse_Wronskian} for a detailed construction and local analysis of $J^{r+1}(L)$). To begin, note that a given basis $\lambda = (\lambda_0,\ldots,\lambda_r)$ of $V\in\mathrm{Gr}(r+1,H^0(L))$ induces a canonical isomorphism $\psi_{\la}: V \otimes \mc{O} \rightarrow \mc{O}^{r+1}$ that sends $\la=(\la_0,\dotsc,\la_r)$ to the standard coordinate basis $(e_0,\dotsc,e_r)$ of $\mc{O}^{r+1}$. This, in turn, induces an isomorphism $\psi_{\la}^*: \mathcal{H}\mathrm{om}(\mc{O}^{r+1},J^{r+1}(L)) \rightarrow \mathcal{H}\mathrm{om}(V \otimes \mc{O},J^{r+1}(L))$ between vector bundles. The evaluation morphism \eqref{evaluation_morphism} defines a section $\mathrm{ev} \in \mathrm{Hom}(V \otimes \mc{O},J^{r+1}(L))=H^0(\mathcal{H}\mathrm{om}(V \otimes \mc{O},J^{r+1}(L)))$; we define 
$W(\lambda) \in \mathrm{Hom}(\mc{O}^{r+1},J^{r+1}(L))$ of $\lambda$ to be $(\psi_{\la}^*)^{-1}(\mathrm{ev})$. 
We call the determinant $w(\la) \in H^0(\det(J^{r+1}(L)))$ of $W(\la)$ the {\it Hasse Wronskian} of $\la$. The vanishing subscheme of $w(\la)$, i.e., the associated inflectionary locus, is independent of the choice of $\la$; indeed, replacing $\la$ by another basis has the effect of rescaling $w(\la)$ by a scalar.

\medskip
Hereafter, we will refer to the {\it arithmetic inflection divisor} of a $g^r_d$ on $X$ (and defined over $F$) as the class in $\text{GW}(F)$ of the $\mb{A}^1$-Euler class of $\det(J^{r+1}(L))$, whenever the nonsurjective locus of \eqref{evaluation_morphism} is finite and $\det(J^{r+1}(L))$ is relatively orientable. In light of the two preceding paragraphs, this is the case for the complete series $|2 \ell \infty_X|$ on a hyperelliptic curve $X$ ramified over an $F$-rational point $\infty_X$. The arithmetic inflection divisor is a sum of local Euler {\it inflectionary} indices computed by Hasse Wronskians, which we examine next.

\subsection{Nisnevich coordinates}\label{subsec:Nis}
To calculate local inflectionary indices, we carry out calculations in
{\it Nisnevich} local coordinates of a hyperelliptic curve $X$. Here a Nisnevich chart, or system of Nisnevich local coordinates, at a closed point $p \in X$ is an \'etale morphism $\varphi: U \ra \mb{A}^1$ from an open neighborhood $U$ of $X$ containing $p$, which induces an isomorphism $k(\varphi(p)) \cong k(p)$ between residue fields via the pullback $\varphi^*$. We will refer to the latter requirement as the {\it Nisnevich condition}. Note that given a closed point $p \in X$, a Nisnevich local coordinate exists by \cite[Proposition 20]{KW2}. 

\medskip
We then need to check that the local Euler indices obtained by the choices of Nisnevich local coordinates and local trivializations of $\mc{E}=\mc{L}^{\otimes 2} \otimes K_X$ as above are consistent with the relative orientation described in \S \ref{subsec:Rel_Orient}. 
To explain how this works, assume that $X$ is a smooth projective variety of dimension $d>0$, that $\mathcal{E}$ is a vector bundle of rank $d$ on $X$, and that $(X,\mathcal E)$ is relatively oriented, i.e., equipped with a (fixed) isomorphism $\mc{H}\mathrm{om}(\det T_X,\det \mc{E}) \rightarrow \mc{L}^{\otimes 2}$. Given a Nisnevich chart $X \supset U \rightarrow \mb{A}^d$, let $\psi: \mc{E}|_U \rightarrow \mc{O}_U^d$ denote the corresponding trivialization of $\mc{E}$. Our Nisnevich chart and the associated trivialization $\psi$ of $\mc{E}$ determine a \emph{distinguished basis} $(e_1,\dotsc,e_d)$ on $H^0(\mc{E}|_U)$ induced by pulling back standard coordinate bases on $H^0(\mc{O}_U^d)$ under the isomorphism $\psi$. On the other hand, the fact that $\phi$ is \'etale implies that $T_X|_U \cong \phi^*T_{\mb{A}^d}$, so the standard basis $(\partial_1,\dotsc,\partial_d)$ on $H^0(T_{\mb{A}^d})$ pulls back to a \emph{distinguished basis} $(v_1,\dotsc,v_d)$ on $H^0(T_X|_U)$. Similarly, the \emph{distinguished basis} on $\det \mc{E}|_U$ (resp. $\det T_X|_U$) refers to $e_1 \wedge \dotsb \wedge e_d \in H^0(\det \mc{E}|_U)$ (resp. $v_1 \wedge \dotsb \wedge v_d \in H^0(\det T_X|_U)$). The following {\it Nisnevich compatibility} condition was introduced in \cite[Def. 19]{KW2}.


\begin{dfn}\label{def:compatible}
    Given a Nisnevich chart $\phi$ and a local trivialization $\psi$ as above, $\psi$ is compatible with $\phi$ if the element of $\mc{H}\mathrm{om}(\det T_X|_U,\det \mc{E}|_U)$ sending the distinguished basis $v_1 \wedge \dotsb \wedge v_d$ of $\det T_X|_U$ to the distinguished basis $e_1 \wedge \dotsb \wedge e_d$ of $\det \mc{E}|_U$ is a square in $H^0(\mc{L}|_U^{\otimes 2})$, under the identification $\mc{H}\mathrm{om}(\det T_X,\det \mc{E}) \rightarrow \mc{L}^{\otimes 2}$ prescribed by the relative orientation.
\end{dfn}

In practice, it is equivalent to verify the compatibility conditions of various local trivializations by checking that the transition maps of $\mc{H}\mathrm{om}(\det T_X,\det \mc{E})$ between pairs of Nisnevich charts and local trivializations are squares; see, e.g., \cite[Ex. 31]{KW2}. This is because the collection of such trivializations determines the identification $\mc{H}\mathrm{om}(\det T_X,\det \mc{E}) \rightarrow \mc{L}^{\otimes 2}$ up to squares of scalars.

\subsection{Local inflectionary indices}\label{subsec:loc_Euler_general}
In \autoref{subsec:loc_glob_Euler}, we saw that the global Euler class is a sum of local Euler indices. Often, 
local Euler indices encode subtle geometric data. Here, we explain first how to compute local Euler indices 
along an open neighborhood $U$ of a closed point $p \in \mb{A}^1_F$. The reduction from the general case to the affine case is explained in \cite[\S 4]{KW2}, which we summarize now.

\medskip
To compute a local Euler index, suppose that $\sigma$ is a nontrivial section of a relatively orientable line bundle $\mc{E}$ on a curve $X$ defined over $F$, and that $\sig$ vanishes at an isolated closed point $p$ in $X$. Let $\varphi: U \rightarrow \mb{A}^1_F$ be a Nisnevich chart at $p \in U$, and let $\psi:\mc{E}_U \rightarrow \mc{O}_U$ be a compatible local trivialization. Without loss of generality we may assume, shrinking $U$ if necessary, that $p$ is the unique point of $U$ in which $\sigma$ vanishes. Let $Z$ denote the intersection of the closed subscheme $(\sigma=0)$ and $U$; the local ring $\mc{O}_{Z,p}$ is then isomorphic to the quotient $\mc{O}_{U,p}/f$, where $f=\psi(\sigma|_U)$. According to \cite[Proof of Lem. 24]{KW2}, the pullback $\varphi^*:F[x]_{\varphi(p)} \rightarrow \mc{O}_{U,p}$ on local rings now induces surjections $F[x] \twoheadrightarrow \mc{O}_{U,p}/\mf{m}^n$ for every $n \in \mb{N}$, where $\mf{m}$ is the maximal ideal of $\mc{O}_{U,p}$. 
As $\mc{O}_{Z,p}$ is an Artinian local ring, there is some $n \in \mb{N}$ with $\mf{m}^n=0$ in $\mc{O}_{Z,p}$; Kass and Wickelgren then choose some $g \in F[x]$ for which $\varphi^*(g)-f \in \mf{m}^{2n}$, so that $\mc{O}_{Z,p} \cong F[x]_{\varphi(p)}/g$. 
They show in the remainder of \cite[\S 4]{KW2} (see, in particular, \cite[Def. 30 and Cor. 31]{KW2}) that the local Euler index at $p$ may be obtained directly from the presentation $\mc{O}_{Z,p} \cong F[x]_{\varphi(p)}/g$, and is independent of the choice of local Nisnevich coordinate $\varphi$, compatible local trivialization $\psi$, and local section $g \in F[x]$. 
This, in turn, allows us to reduce to the case in which $p \in U = \mb{A}^1_F$ and $\sigma$ is represented as a section $g \in H^0(\mc{O}_{\mb{A}^1_F})=F[x]$ that vanishes in an isolated closed point $p$. 
Note that when the residue field of $p \in \mb{A}^1_F$ is $F$, we may assume without loss of generality that $p=0$. The following proposition characterizes the corresponding local Euler index.



\begin{lemma}\label{lem:locEuler}
    Let $F$ be a field of characteristic $\neq 2$. Suppose that $\sigma \in H^0(\mathcal{O}_{\mathbb{A}^1_F})$ is given by $x^m(a+xg(x))$, for some $a \in F^*$, $m \in \nn$ and $g(x) \in F[x]$; then
    \begin{align*}
        \mathrm{ind}_0\sigma=
        \begin{cases}
            \dfrac{m}{2}\cdot \mb{H} & \text{if $m$ is even}\\
            \noalign{\vskip9pt}
            \dfrac{m-1}{2}\cdot \mb{H} +\left<a\right> & \text{otherwise}.
        \end{cases}
    \end{align*}
\end{lemma}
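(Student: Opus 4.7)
The plan is to invoke the Scheja--Storch construction, as formulated in \cite{BBMMO} and \cite{KW3}, identifying $\mathrm{ind}_0\sigma$ with the class in $\mathrm{GW}(F)$ of a symmetric bilinear form $\beta_\sigma$ on the local algebra $Q_\sigma := F[x]_{(x)}/(\sigma)$. Since $u(x) := a + xg(x)$ is a unit in the local ring at $0$, the ideal $(\sigma)$ coincides with $(x^m)$ in $F[x]_{(x)}$, so $Q_\sigma \cong F[x]/(x^m)$ has $F$-dimension $m$; the form $\beta_\sigma$ is explicitly encoded by the Bezoutian $B_\sigma(x,y)$ determined by $\sigma(x)-\sigma(y)=(x-y)B_\sigma(x,y)$.

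Next, I would reduce to the pure-power case $\sigma = ax^m$. Consider the family $\sigma_t(x) := x^m(a + t\cdot xg(x))$ for $t \in \mb{A}^1$; at every $t$, the origin is the unique zero of $\sigma_t$ in a small fixed neighborhood, of constant multiplicity $m$, and the local algebras $Q_{\sigma_t}$ form a flat family of rank $m$. By the deformation invariance of the Scheja--Storch class along such a family, equivalently by $\mb{A}^1$-invariance of the local degree as in \cite[\S 4]{KW3}, $\mathrm{ind}_0\sigma_1 = \mathrm{ind}_0\sigma_0 = \mathrm{ind}_0(ax^m)$. Alternatively, since $\mathrm{char}(F) \neq 2$, Hensel's lemma applied to $T^2 - u(x)/u(0)$ shows that $u(x)/u(0)$ is a square in $F[[x]]^*$; absorbing this square into the local trivialization of $\mc{E}$ rescales $\beta_\sigma$ by a square in $F^*$ and hence does not alter its class in $\mathrm{GW}(F)$.

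It remains to compute the Scheja--Storch form for $\sigma = ax^m$. Here the Bezoutian is $B_\sigma(x,y) = a(x^{m-1} + x^{m-2}y + \cdots + y^{m-1})$, and in the ordered basis $\{1, x, \ldots, x^{m-1}\}$ of $Q_\sigma$ this yields a Gram matrix $M$ for $\beta_\sigma$ which vanishes outside the anti-diagonal, with $M_{i,m-1-i} = a$ for $0 \leq i \leq m-1$. I would diagonalize $M$ by grouping symmetric pairs: for each $i$ with $2i < m-1$, the basis vectors $x^i$ and $x^{m-1-i}$ span a $2 \times 2$ block $\bigl(\begin{smallmatrix} 0 & a \\ a & 0 \end{smallmatrix}\bigr)$ representing a hyperbolic plane, which contributes $\mb{H}$ to the class (since $\langle b \rangle \cdot \mb{H} = \mb{H}$ in $\mathrm{GW}(F)$ for every $b \in F^*$). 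When $m$ is odd, the unpaired central vector $x^{(m-1)/2}$ pairs with itself with value $a$, contributing an additional $\langle a \rangle$; when $m$ is even no such central term arises, yielding the two cases of the lemma.

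The main obstacle I expect is making the reduction step rigorous: one must verify that the deformation $\sigma_t$ genuinely preserves the Scheja--Storch isomorphism class of $\beta_{\sigma_t}$, and not merely the rank of $Q_{\sigma_t}$. The Hensel-square-root alternative is arguably cleaner, but relies on checking that replacing a polynomial trivialization by a formal-power-series one does not alter $\mathrm{ind}_0\sigma$; both facts are standard in the $\mb{A}^1$-degree literature but should be invoked carefully.
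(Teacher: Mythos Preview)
Your approach is correct and essentially coincides with the paper's: both reduce to $\sigma = ax^m$ and then diagonalize the resulting anti-diagonal $m \times m$ form into hyperbolic planes plus (when $m$ is odd) $\langle a \rangle$. The paper carries out the reduction in a single line by observing that the Newton matrix of \cite[Def.~7]{KW3} depends only on $\sigma$ up to a unit in $F[x]_{(x)}$, and obtains $a^{-1}$ rather than your $a$ on the anti-diagonal; since $\langle a^{-1}\rangle=\langle a\rangle$, this is immaterial.
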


\begin{proof}
    Applying \cite[Cor. 8]{KW3}, 
    it suffices to find the class in ${\rm GW}(F)$ of a bilinear form associated to the local Newton matrix $\mathrm{New}(x^m(a+xg(x)),0)$ from \cite[Def. 7]{KW3}; since $x^m(a+xg(x))$ is equal to a unit times $ax^m$ in the local ring $F[x]_{(x)}$, \cite[Def. 7]{KW3} implies that $\mathrm{New}(x^m(a+xg(x)),0)=\mathrm{New}(ax^m,0)$ is the following $m$ by $m$ matrix:
    \begin{align*}
        \begin{pmatrix}
            0 & 0 & \cdots & 0 & a^{-1}\\
            0 & 0 & \cdots & a^{-1} & 0\\
            \vdots & \vdots & \reflectbox{$\ddots$} & \vdots & \vdots\\
            0 & a^{-1} & \cdots & 0 & 0\\
            a^{-1} & 0 & \cdots & 0 & 0
        \end{pmatrix}.
    \end{align*}
    By \cite[Lemma 6]{KW3}, the class in ${\rm GW}(F)$ of $\mathrm{New}(ax^m,0)$ matches with the statement of Lemma~\ref{lem:locEuler} except that $a$ is replaced with $a^{-1}$; we conclude using $\left<a^{-1}\right>=\left< a^2 \cdot a^{-1} \right>=\left<a\right>$.
\end{proof}

\begin{rem}\label{rem:loc_Euler_cases}
    When $F=\mathbb{R}$, we have $\left<a\right>=\left<1\right>$ if $a>0$, and $\left<a\right>=\left<-1\right>$ otherwise. In this case, the local index recovers Milnor's real oriented index. Indeed, suppose that we have a smooth function $f:\mathbb{R}^n\to\mathbb{R}^n$ and $q\in\mathbb{R}^n$ is a regular value of $f$ (Sard's theorem implies that such a regular value exists). The inverse function theorem guarantees that for any point $p\in f^{-1}(\{q\})$, the map $f$ is a diffeomorphism between a neighborhood of $p$ and $q$. The Milnor index at the point $p$ will be $1$ if the restriction of $f$ to that neighborhood is orientation-preserving, and $-1$ if it is orientation-reversing. In particular, when $m=1$, we have a map $\sigma:\mathbb{R}\to\mathbb{R}$, $\sigma(x) = ax + g(x)x^2$ that has a regular value at $0$, and its derivative at the origin is $\sigma^{\pr}(0) = a$; the sign of $a$ determines whether or not $\sigma$ is orientation-preserving.
    
    \medskip
    Now say $F=\mathbb{F}_q$, where $2 \not | \; q$. Recall that ${\rm GW}(\mathbb{F}_q) \cong \mathbb{Z} \times \mathbb{F}_q^{\times}/(\mathbb{F}_q^{\times})^2
    \cong \mb{Z} \times \mathbb{Z}/2\mathbb{Z}$. Thus $\left<a \right>=\left<1\right>$ if and only if $a$ is a square in $\mb{F}_q$. 
    In particular, $\left<-1\right>=\left<1\right>$ if and only if $-1$ is a square in $\mathbb{F}_q$.
    
    \medskip
    A significant and interesting case arises when $q=p$ is an odd prime. Then $\left< -1\right>=\left<1\right>$ if and only if $p \equiv 1$ modulo 4, while $\left<a\right>=\left<1\right>$ if and only if $\left(\dfrac{a}{p}\right)=1$, where $\left(\dfrac{a}{p}\right)$ is the Legendre symbol. For example, when $p \equiv 1$ modulo $4$, quadratic reciprocity reduces Lemma~\ref{lem:locEuler} to the statement that
    \begin{align*}
        \mathrm{ind}_0\sigma=
        \begin{cases}
            m\left<1\right> & \text{if $m$ is even}\\
            \noalign{\vskip9pt}
            (m-1)\left<1\right> +\left<a\right> & \text{otherwise}.
        \end{cases}
    \end{align*}
\end{rem}

\medskip

In general, an isolated zero $p$ of a section $\sigma$ may not be defined over the base field $F$. Nonetheless, under a mild hypothesis on the residue field of $p$, the local Euler index may be computed using \cite[Theorem 1.3]{BBMMO}, as follows.

\begin{lemma}\label{lem:locEuler_bch}
    Let $F$ be a field. If $\sigma \in H^0(\mathcal{O}_{\mathbb{A}^1_F})$ has an isolated zero at $p \in \mathbb{A}^1_F$ and $k(p)/F$ is separable, then 
    \[
        \mathrm{ind}_p\sigma=\mathrm{Tr}_{k(p)/F}\mathrm{ind}_{\overline{p}}(\sigma \otimes_F 1)
    \]
    where $\overline{p}$ is a point above $p$ in $\mathbb{A}^1_{k(p)}$, $\sigma \otimes_F 1$ is the lift of $\sigma$ as a section of $\mc{O}_{\mb{A}^1_{k(p)}}$ via the base change into $k(p)$, and $\mathrm{Tr}_{k(p)/F}: {\rm GW}(k(p)) \ra {\rm GW}(F)$ denotes the trace on bilinear forms induced via post-composition by the field trace ${\rm tr}_{k(p)/F}:k(p) \ra F$.\footnote{In fact, $\mathrm{ind}_{\overline{p}}(\sigma \otimes_F 1)$ is independent of the choice of $\overline{p}$ because $\sigma \otimes_F 1$ is fixed under the action of $\mathrm{Gal}(k(p)/F)$.}
\end{lemma}

\begin{rem}\label{rmk:geom_pt}
    When the residue field $k(x)$ of a point $x \in X$ is a trivial extension of $F$, 
    the dual $dv$ of the distinguished basis $v$ of $T_X|_U$ 
    associated with a local Nisnevich 
    chart $\varphi: U \rightarrow \mb{A}^1_F \cong \mathrm{Spec}\: F[t]$ coincides with the pullback $\varphi^*(dt)$, where $t$ up to $F$-translation is a uniformizer of $p:=\varphi(x) \in \mathrm{Spec}\: F[t]$; we call $dv$ the distinguished basis of $K_X|_U$. This fact is quite useful 
    in light of 
    Definition~\ref{def:compatible}, and given the role played by the canonical bundle $K_X$ in constructing the line bundle $\mc{E} = \det(J^{r+1}(L)) \cong L^{\otimes r+1} \otimes K_X^{\binom{r+1}{2}}$. 
    
    \medskip
    However, the distinguished basis $dv=\varphi^*(dt)$ no longer coincides with the pullback of the differential of a uniformizer of $p$ when the extension $k(x)/F$ is nontrivial: there is no $F$-linear function of $t$ 
    with a simple zero in $p$. 
    As a consequence of Lemma~\ref{lem:locEuler_bch}, the computation of the local Euler index of a nontrivial section $\sigma \in H^0(\mc{E})$ at $x$ reduces to that of a $k(p) \cong k(x)$-rational closed point $\overline{p} \in \mathrm{Spec}\: k(p)[t]$ above $p:=\varphi(x) \in \mathrm{Spec}\: F[t]$. Observe that $\overline{p}$ is an image $\overline{\varphi}(\overline{x})$ of a $k(x)$-rational point $\overline{x} \in X \times_{\mathrm{Spec}(F)} \mathrm{Spec}(k(x))$ above $x$ under the lift $\overline{\varphi}:U \times_{\mathrm{Spec}(F)} \mathrm{Spec}(k(p)) \rightarrow \mathrm{Spec}\: k(p)[t]$ of $\varphi$. The main advantage of using a $\mathrm{Gal}(k(x)/F) \cong \mathrm{Gal}(k(p)/F)$-equivariant local Nisnevich coordinate 
    for $X \times_{\mathrm{Spec}(F)} \mathrm{Spec}(k(p))$ is that the $\overline{\varphi}^*(dt)$ is indeed the dual $d\overline{v}$ of the distinguished basis $\overline{v}$ under the local Nisnevich 
    chart $\overline{\varphi}$, where $\overline{v}$ is the lift of $v$ under the base change $k(p)/F$.
    
    \medskip
    Hereafter, we abusively conflate a given local Nisnevich chart 
    and compatible local trivialization $\psi$ at $x$ 
    with their corresponding lifts under the base change $k(x)/F$ whenever the field extension $k(x)/F$ is nontrivial. With this abuse of notation, calling $t$ a \emph{local Nisnevich uniformizer at $x$} whenever $\mb{A}^1_F \cong \mathrm{Spec}\: F[t]$ 
    is justified up to $k(x)$-translation. 
\end{rem}

\subsection{A global arithmetic Euler class}\label{subsec:global_Euler_general}

Here we present a general formula for the global $\mb{A}^1$-Euler class of a relatively orientable line bundle on a curve; as a consequence, we deduce a global arithmetic inflection formula for complete linear series $|2\ell \infty_X|$ on a hyperelliptic curve $X$.

\begin{thm}\label{thm:global_Euler_class}
    Suppose that $\mc{E}$ is a relatively orientable line bundle on a curve $X$ defined over a field $F$ not of characteristic 2, equipped with a nontrivial section $\sigma \in H^0(\mc{E})$ that vanishes in isolated points. 
    The $\mb{A}^1$-Euler class $e(\mc{E})$ in ${\rm GW}(F)$ is $\frac{m_{\overline{F}}}{2} \cdot \mb{H}$, where $m_{\overline{F}}$ is the degree of $\mc{E}$.\footnote{Here the fact that $\mc{E}$ is relatively orientable forces $m_{\overline{F}}$ to be even.}
\end{thm}

\begin{proof}
    The statement is a special case of \cite[Prop. 19]{SW}; here we give the salient points of its proof. As explained in section~\ref{subsec:loc_glob_Euler}, \cite[Thm 1.1]{BW} establishes that the global Euler class $e(\mc{E})$ is a sum of local Euler indices $\mathrm{ind}_p\sigma$ over points $p \in X$ where $\sigma$ vanishes. Moreover, {\it loc.cit.} implies that $e(\mc{E})$ is independent of the choice of section $\sigma$.
    
    \medskip
    Now, given a field extension $L/F$, let $\mc{E}_L$ (resp. $\sigma_L$) denote the pullback of $\mc{E}$ (resp. $\sigma$) under the base change $X_L \rightarrow X$. Applying Lemma~\ref{lem:locEuler} together with the fact that $\left<\al\right>\mb{H}=\mb{H}$ for every $\al \in L^*$ (see \cite{Lam}), we have $\mathrm{ind}_x(\alpha\sigma_L)=\left<\al\right> \mathrm{ind}_x(\sigma_L)$. 
    Comparing the sums of local Euler indices for $\sigma_L$ and $\al \cdot \sigma_L$, we immediately see that $e(\mc{E}_L)=\left<\al\right>e(\mc{E}_L)$. Note that the field extension $L/F$ induces a map on Grothendieck--Witt groups ${\rm GW}(F) \rightarrow {\rm GW}(L)$, which identifies $e(\mc{E}_L)$ with the image of $e(\mc{E})$. 
    
    \medskip
    The fact that $e(\mc{E})$ is an integer multiple of $\mb{H}$ 
   follows from basic results of Milnor--Witt theory as developed by Morel in \cite[Ch. 3]{Mor}. Specifically, there is a (residue) map $\partial_t: {\rm GW}(F(t)) \rightarrow {\rm W}(F)$ by \cite[Lem. 3.10 and Theorem 3.15]{Mor}, 
    and the map $\eta: {\rm GW}(F) \rightarrow {\rm W}(F)$ which coincides with the quotient ${\rm GW}(F) \rightarrow {\rm GW}(F)/\mb{H}$. It then follows that
    \[
        0=\partial_t(e(\mc{E}_{F(t)}))=\partial_t(\left<t\right>e(\mc{E}_{F(t)}))=\eta(e(\mc{E}))
    \]
    by \cite[Remark 21]{SW} and the fact that $e(\mc{E}_{F(t)})=\left<\al\right>e(\mc{E}_{F(t)})$ for any $\al \in F(t)^*$ as above. The fact that $\eta(e(\mc{E}))=0$ immediately implies that $e(\mc{E})$ must be an integer multiple of $\mb{H}$. 
    
    \medskip
    Finally, to see that $e(\mc{E})$ is exactly $\frac{m_{\overline{F}}}{2} \cdot \mb{H}$, it suffices to show that $\mathrm{rk}(e(\mc{E})) = m_{\overline{F}}$, where $\mathrm{rk}:{\rm GW}(F)\to\mathbb{Z}$ is 
    induced by the rank map on bilinear forms. Indeed, this is enough because $\mathrm{rk}(\mb{H})=2$. To see it, note that the divisor $(\sigma=0)$ has degree $m_{\overline{F}}$ by definition. On the other hand, every local Euler index $\mathrm{ind}_p(\sigma)$ for each $p \in (\sigma=0)$ has rank equal to the multiplicity of $(\sigma=0)$ at $p$. Summing ranks of local Euler indices, we see that the rank of $e(\mc{E})$ must be $m_{\overline{F}}$, and we conclude.
\end{proof}


\begin{coro}\label{thm:global_Euler_class_inflection}
Let $F$ be a field with $\mathrm{char}(F) \neq 2$, $\ell \geq 1$ a positive integer, and $L=\mc{O}(2\ell \infty_X)$, where $\pi:X \ra \mb{P}^1$ is a hyperelliptic curve of genus $g$ defined over $F$ and ramified in $\infty_X=\pi^{-1}(\infty)$. 
Assume, moreover, that 
the number of $\ov{F}$-inflection points is finite. Associated to the complete linear series $|L|$ on $X$ there is a well-defined arithmetic $F$-inflection class in ${\rm GW}(F)$ given by
\[
[\mbox{\rm{Inf}}_F(|L|)]_{\mb{A}^1}=\frac{\ga_{\mb{C}}}{2}\cdot \mb{H}
\]
where 
\begin{equation*}
\ga_{\mb{C}}:= \begin{cases} \ell(\ell+1)(g+1) & \text{if } \ell  \le g \\ g(2\ell-g+1)^2 & \text{if } \ell > g \end{cases}
\end{equation*}
is the $\mb{C}$-inflectionary degree computed by the usual Pl\"ucker formula.
\end{coro}

\begin{proof}
The line bundle $\mc{E}=\det J^{r+1}(L)$ is isomorphic to $L^{\otimes (r+1)} \otimes K_X^{\otimes \binom{r+1}{2}}$ by Proposition~\ref{prop:jet_exact_seq}. As $K_X \cong \mc{O}((2g-2)\infty_X)$ and $L=\mc{O}(2\ell \infty_X)$ are both tensor squares over $F$, $\mc{E}$ on $X$ is relatively orientable. Moreover, from section~\ref{subsec:lin_series_hypell}, $\mc{E}$ admits a Hasse Wronskian section $w(\la)$, whose vanishing locus comprises the zero-dimensional $\ov{F}$-inflection divisor on $X$ whenever the number of $\ov{F}$-inflection points is finite. The associated arithmetic $F$-inflection class is the $\mb{A}^1$-Euler class of $\mc{E}$. According to Theorem~\ref{thm:global_Euler_class}, this is $\frac{m_{\ov{F}}}{2} \cdot \mb{H}$, where
\[
m_{\ov{F}}= \deg \left(L^{\otimes (r+1)} \otimes K_X^{\otimes \binom{r+1}{2}}\right)= \begin{cases} \ell(\ell+1)(g+1) & \text{if } \ell  \le g \\ g(2\ell-g+1)^2 & \text{if } \ell > g \end{cases}
\]
which in view of \eqref{eq:monomial_basis} is precisely $\ga_{\mb{C}}$. 
\end{proof}

\subsection{Toric geometry of hyperelliptic curves}\label{subsec:toric_geometry}

\medskip
Any fact about toric varieties in this subsection is standard; we follow \cite{CLS}. As before, let $X$ denote a hyperelliptic curve of genus $g$ defined over a field $F$ with $\text{char}(F) \neq 2$. Whenever $g \geq 2$, passing to the projective completion of $X:y^2=f(x)$ by homogenizing the affine equation of $X$ with respect to an auxiliary variable introduces singularities. On the other hand, $X$ always smoothly embeds in a Hirzebruch surface, namely $\mb{F}_d=\mb{P}(\mc{O}_{\mb{P}^1} \oplus \mc{O}_{\mb{P}^1}(\mc{D}))$, where $2\mc{D} \sim B$ is the branch divisor of the hyperelliptic structure map $\pi$ and $d=\deg \mc{D}=g+1$. The cone of curves on $\mb{F}_d$ is generated by (the classes of) a fiber $f$ and by the unique section $\sig$ with self-intersection $-d$. The adjunction formula implies that $X \sim 2(\sigma+df)$, so in particular $X$ and $\sigma$ do not intersect. Accordingly, it suffices to work locally over those two toric local coordinate charts $U_{\tau_1}$ and $U_{\tau_2}$ of $\mathbb{F}_d$ corresponding to the cones $\tau_1$ and $\tau_2$, as in Figure~\ref{fig:Hirzebruch_fan}.

\begin{figure}
\begin{tikzpicture}[scale=0.7]
\draw[fill=gray!50] (0,2) -- (0,0) -- (2,0) (2,2) node{$\tau_1$};
\draw[fill=gray!10] (-1,-2) -- (0,0) -- (0,2) (-2,0) node{$\tau_2$};
\draw (0,0) -- (0,-2);
\end{tikzpicture}
\caption{The fan of $\mathbb{F}_d$, with rays $u_1=e_1$, $u_2=e_2$, $u_3=-e_1-de_2$, $u_4=-e_2$. Here $\tau_1= \mathrm{Span}\{u_1,u_2\}$, $\tau_2= \mathrm{Span}\{u_2,u_3\}$, and $d=2$.}
\label{fig:Hirzebruch_fan}
\end{figure}
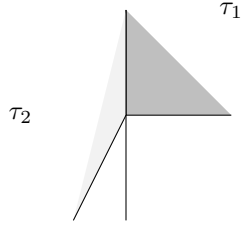

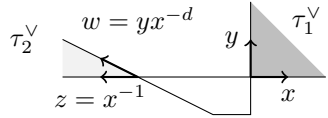
\begin{figure}
\begin{tikzpicture}[scale=0.5]
\draw[fill=gray!50] (0,2) -- (0,0) -- (2,0) (1.5,1.5) node{$\tau_1^\vee$};
\draw (0,0) -- (-3,0) -- (-1,-1) -- (0,-1) -- (0,0);
\draw[fill=gray!10] (-5,1) -- (-3,0) -- (-5,0) (-6,1) node{$\tau_2^\vee$};
\draw[thick,->] (0,0)--(1,0);
\draw (1,-0.5) node{$x$};
\draw[thick,->] (0,0)--(0,1);
\draw (-0.5,1) node{$y$};
\draw[thick,->] (-3,0)--(-4,0);
\draw (-4,-0.5) node{$z=x^{-1}$};
\draw[thick,->] (-3,0)--(-4,0.5);
\draw (-3,1.5) node{$w=yx^{-d}$};
\end{tikzpicture}
\caption{The dual polygon of $\mb{F}_d$, along with the dual cones $\tau_1^{\vee}$ and $\tau_2^{\vee}$ generated by rays $e_1$, $e_2$, $-e_1$, $-de_1+e_2$, which determine characters $x$, $y$, $z$, $w$ respectively. Here $\tau_1^{\vee}=\mathrm{Cone}(e_1,e_2)$ and $\tau_2^{\vee}=\mathrm{Cone}(-e_1,-de_1+e_2)$.}
\label{fig:dual_Hirzebruch_fan}
\end{figure}


\medskip
Here we use the canonical stratification of $\mb{F}_d$ by torus orbits prescribed by a {\it fan} $\Sig$ contained inside the Euclidean space $N_{\mb{R}}= N \otimes_{\mb{Z}} \mb{R}$ determined by the lattice $N \cong \mb{Z}^2$ of one-parameter subgroups $F^* \ra T$, where $T$ denotes the maximal torus of $\mb{F}_d$. 
Each two-dimensional cone corresponds uniquely to a torus-fixed point of $\mb{F}_d$, and each ray $r$ corresponds to a $T$-invariant curve $\mathrm{D}_r$, whose closure contains torus-fixed points corresponding to adjacent two-dimensional cones; in our case there are four of these, namely $\mathrm{D}_{u_i}$, $i=1,2,3,4$. The lattice $N$ of one-parameter subgroups is dual to the lattice $M$ of {\it characters} $\chi:T \ra F^*$; and $M_{\mb{R}}= M \otimes_{\mb{Z}} \mb{R}$ contains a polygon $\mc{P}$ (in the case at hand, a trapezoid) that is dual to $\Sig$.



\medskip
By definition, the ring of functions of each affine open $U_{\tau_i}$ is the semigroup algebra of the dual cone $\tau_i^{\vee}$ of linear functionals that pair nonnegatively with $\tau_i$, $i=1,2$. See Figure~\ref{fig:dual_Hirzebruch_fan}, which illustrates the polygon that underlies $\mb{F}_d$, each of whose vertices corresponds to a cone of the dual fan; the characters determined by each cone's rays give canonical local affine coordinates.\footnote{More precisely, the fan of $\mb{F}_d$ may be recovered as the collection of outer normals to the edges of the polygon, translated so that they collectively emanate from the origin.}
It follows that there are canonical isomorphisms $\psi_1: U_{\tau_1} \xrightarrow{\sim} \mathbb{A}^2_{x,y}$ and $\psi_2: U_{\tau_2} \xrightarrow{\sim} \mathbb{A}^2_{z,w}$. 
Via $\psi_1$, $\mathrm{D}_{u_1}$ and $\mathrm{D}_{u_2}$ are identified with $(y=0)$ and $(x=0)$ in $\mb{A}^2_{x,y}$, respectively. Similarly, via $\psi_2$, $\mathrm{D}_{u_2}$ and $\mathrm{D}_{u_3}$ correspond to $(z=0)$ and $(w=0)$, respectively. Since $z=x^{-1}$ and $w=x^{-d}y$, the transition map $\varphi: U_{\tau_1}\backslash \mathrm{D}_{u_2} \rightarrow U_{\tau_2}\backslash \mathrm{D}_{u_2}$ is identified with $(\psi_2\circ \varphi\circ \psi_1^{-1})(x,y)=(x^{-1},yx^{-d})$. 

\medskip
Now let $\theta:=\sigma+df$. Since $X$ is given by a vanishing locus of a section of 
$\mathcal{O}(2\theta)$, we also need to trivialize the bundle $\mathcal{O}(\theta)$ over $U_{\tau_2}$. Since $\mathrm{D}_{u_2} \sim \theta$, it is easy to see that the transition map of $\mathcal{O}(\theta)$ corresponding to $\varphi$ is given by multiplication by $z^d$. Similarly, the transition map for $\mathcal{O}(f)$, which restricts to $\mathcal{O}_X(2\infty)$, is given by multiplication by $z$. It follows from the preceding discussion that $X$ is given in $U_{\tau_2} \cong \mb{A}^2_{z,w}$ by $w^2=h(z)$, where $h(z):=z(z^{2g+1}f(z^{-1}))$.

\subsection{Nisnevich coordinates and linear projections}\label{subsec:nisnevich_coords_and_projections}
Recall 
that in order to compute local Euler indices, we first need to specify local Nisnevich coordinates with compatible local trivializations; see section~\ref{subsec:compute_loc_Euler} and Definition~\ref{def:compatible}.
Here our aim is to construct, under mild assumptions, explicit compatible local Nisnevich coordinates for complete linear series associated to line bundles $L=\mc{O}(2\ell\infty_X)$ on hyperelliptic curves $X$ as in section~\ref{subsec:lin_series_hypell}.

\medskip
To begin, let $p \in X \setminus \{ \infty_X \}$. 
Assuming that the base field $F$ is perfect, the restriction of a general linear projection $\mb{A}^2_{x,y} \rightarrow \mb{A}^1_t$ to $X \setminus \{\infty_X\} \subset U_{\tau_1} \cong \mb{A}^2_{x,y}$, where $t$ is a nontrivial $F$-linear combination of $x$ and $y$,
determines a local Nisnevich chart at $p$.

\begin{rem}\label{rmk:perfect_field}
    Perfectness of the base field $F$ is necessary in order to guarantee that we can cover a given hyperelliptic curve $X$ by local Nisnevich charts. Indeed, given $p \in X \setminus \{ \infty_X \}$, 
    the fact that $\mathrm{char}(F) \neq 2$ ensures that the hyperelliptic projection $\pi: X \rightarrow \mb{P}^1$ is separable over $x=\pi(p) \in \mb{A}^1$. Therefore, it suffices to check that $k(x)/F$ is separable. This is automatic whenever $F$ is perfect. Identifying $p$ with $(x,y) \in \mb{A}^2 \cong U_{\tau_1} \supset X \setminus \{\infty_X\}$, we have $k(p)=k(x,y)=k(ax+by)$ for some $a,b \in F$ because $k(x,y)/F$ is separable. 
    If $F$ is not perfect, our basic strategy for producing local Nisnevich coordinates via linear projection only works provided 
    all residue fields of inflection points on $X$ are separable extensions of $F$. 
    For example, in Remark~\ref{rmk:infl_poly_cond}, if a point $\ga \in \mb{A}^1_x \subset \mb{P}^1$ is inert with respect to the hyperelliptic projection $\pi:X \rightarrow \mb{P}^1$, checking the separability of the extension $k(\ga,\sqrt{f(\ga)})/F$ is equivalent to checking separability of the extension $k(\ga)/F$. 
\end{rem}

Near $\infty_X$, 
we use the linear projection $\mb{A}^2_{z,w} \rightarrow \mb{A}^1_w$. More precisely, 
we have $U_{\tau_2} \cong \mb{A}^2_{z,w}$, and near $\infty_X \in X \cap U_{\tau_2}$ the restriction of the projection is automatically Nisnevich at $\infty_X$. Therefore, we hereafter assume that the base field $F$ is perfect so that the restrictions of both kinds of projections 
under consideration are Nisnevich in neighborhoods of desired points in $X$.

\medskip
For the local trivializations of $\mc{E} = \det J^{r+1}(L)$ in either case, we first need to fix a local trivialization of $L$, so that we can then use the associated local trivializations on $\mc{E}$ induced by those of $L$ and $K_X$. In order to understand local trivializations of $L$, consider the toric fan associated to $\mb{F}_{g+1} \supset X$ as in \S\ref{subsec:toric_geometry}; our goal is to trivialize $L$ by trivializing a line bundle on $\mb{F}_{g+1}$ that restricts to $L$. For this purpose, note that $L \cong \mc{O}_X(a\sigma + \ell f)$ for any $a$, as $\mc{O}_X(\sigma) \cong \mc{O}_X$ and $\mc{O}_X(f) \cong \mc{O}_X(2\infty_X)$. It therefore suffices to produce an integer $a \in \mb{Z}$ so that $|L|$ is induced by a linear series in $|\mc{O}_{\mb{F}_{g+1}}(a\sigma + \ell f)|$. The appropriate choice, for any $\ell$, is $a=1$ (note that $a=0$ also works when $\ell \le  g$). Indeed, whenever $\ell \le g$, any effective divisor on $\mb{F}_{g+1}$ linearly equivalent to $\sigma + \ell f$ must be a union of $\sigma$ together with $\ell$ fiber classes up to multiplicity, while any section of $L$ arises from $H^0(\mc{O}_{\mb{P}^1}(\ell))$ via pullback under the hyperelliptic structure map $X \rightarrow \mb{P}^1$. If instead $\ell > g$, $|L|$ is generated by $n(\mathrm{D}_{u_1} \cap X) + (\ell - n)(2\infty_X)$ and $R_{\pi} + m(\mathrm{D}_{u_1} \cap X) + (\ell - m - g - 1)(2\infty_X)$, where $0 \le m \le \ell - g - 1$ and $0 \le n \le \ell$ (here $\mathrm{D}_{u_1}$ compactifies $(x=0) \subset \mb{A}^2_{x,y} \cong U_{\tau_1}$, while $2\infty_X = \mathrm{D}_{u_3} \cap X$). Similarly, $\mathrm{D}_{u_2}$ compactifies $(y=0) \subset \mb{A}^2_{x,y} \cong U_{\tau_1}$, while $R_{\pi} = X \cap \mathrm{D}_{u_2}$; the fact that $|\sigma + \ell f|$ restricts to $|L|$ on $X$ follows. 

\medskip
To trivialize $L$ on $X$, it suffices to 
trivialize $\mc{O}_{\mb{F}_{g+1}}(\sigma+\ell f)$ along the open subschemes $U_{\tau_i}, i=1,2$ of $\mb{F}_{g+1} \setminus \mathrm{D}_{u_4} = U_{\tau_1} \cup U_{\tau_2}$, as $U_{\tau_1} \cup U_{\tau_2}$ contains $X$. Observe that $U_{\tau_1} \cup U_{\tau_2}$ is isomorphic to the total space of a line bundle $\mc{O}_{\mb{P}^1}((g+1) \infty)$ 
whose zero section 
corresponds to $\mathrm{D}_{u_1} \subset U_{\tau_1} \cup U_{\tau_2}$; note that $\mathrm{D}_{u_1}$ compactifies $(y=0) \subset \mb{A}^2_{x,y} \cong U_{\tau_1}$. Note that the restriction of $\mc{O}_{\mb{F}_{g+1}}(\sigma+\ell f)$ 
to $U_{\tau_1} \cup U_{\tau_2}$ 
is isomorphic to the pullback $\mc{O}_{U_{\tau_1} \cup U_{\tau_2}}(\ell f)$ of $\mc{O}_{\mb{P}^1}(\ell \infty)$. 
Coordinatizing $\mb{P}^1$ as $\mb{P}^1_{[\al:\be]}$ with $x=\frac \al\be$ and $z=\frac \be\al$ ($\infty$ now corresponds to $[1:0]$), $H^0(\mc{O}_{\mb{P}^1}(\ell \infty))$ becomes identified with $F[\al,\be]_{\ell}$. 
Let $s'_1,s'_2$ denote the sections corresponding to $\be^\ell,\al^\ell$ respectively via this identification; then on $\mb{P}^1_{[\al:\be]} \setminus \{[1:0],[0:1]\}$, we have $\frac{s'_2}{s'_1}=\left(\frac{\al}{\be}\right)^\ell=x^\ell=z^{-\ell}$.\footnote{Note that on the affine part $\mb{A}^1_x = \mb{P}^1_{[\al:\be]} \setminus \{[1:0]\}$, we have $(s_1',s_2')=(1,x^\ell)$.}
Letting $s_i, i=1,2$ denote the pullback of $s^{\pr}_i$ via the projection $\pi:U_{\tau_1} \cup U_{\tau_2} \rightarrow \mb{P}^1_{[\al:\be]}$, we obtain trivializations $\psi_{\mb{F}_{g+1},i}:\mc{O}_{U_{\tau_1} \cup U_{\tau_2}}(\ell f)|_{U_{\tau_i}} \rightarrow \mc{O}_{U_{\tau_i}}$ for which the distinguished basis (defined by pulling back the standard basis of $\mc{O}_{U_{\tau_i}}$; see \S~\ref{subsec:Nis}) is exactly $s_i|_{U_{\tau_i}}$.

\medskip
Now, letting $r+1=h^0(L)$, we may trivialize $\mc{E} = \det(J^{r+1}(L)) \cong L^{\otimes (r+1)} \otimes K_X^{\binom{r+1}2}$ 
and $\mc{H}=\mc{H}\mathrm{om}(T_X,\mc{E}) \cong L^{\otimes (r+1)} \otimes K_X^{\binom{r+1}2+1}$ on $X$ by tensoring trivializations on $L$, $T_X$, and $K_X$ with respect to linear projections. 
Along a Nisnevich chart $\varphi_t: U_t \rightarrow \mb{A}^1_t$ given by the restriction of a linear projection $\mb{A}^2_{x,y} \rightarrow \mb{A}^1_t$ to an open subset $U_t$ of $X \cap U_{\tau_1} \subset U_{\tau_1} \cong \mb{A}^2_{x,y}$\footnote{Here $t$ is a nontrivial $F$-linear combination of $x$ and $y$.}, 
let $\psi_{L,t}:L|_{U_t} \rightarrow \mc{O}_{U_t}$ denote a trivialization of $L$ 
with distinguished basis $s_1|_{U_t}$. 
Let $\psi_{T_{U_t}}:T_{U_t} \rightarrow \mc{O}_{U_t}$ be a trivialization of $T_{U_t}$ that sends the distinguished basis $v_t$ on $T_{U_t}$ singled out by $\varphi_t$ (see section~\ref{subsec:Nis}) to the standard basis of $\mc{O}_{U_t}$; 
and define a corresponding trivialization $\psi_{K_{U_t}}:K_{U_t} \rightarrow \mc{O}_{U_t}$ of $K_{U_t}$ by using the dual basis $dv_t$ as the distinguished basis. Taking tensor powers of these trivializations, we now obtain a trivialization $\psi_{\mc{E},t}$ (resp., $\psi_{\mc{H},t}$) of $\mc{E}$ (resp., $\mc{H}$) along $U_t$ with distinguished basis $s_t := (s_1|_{U_t})^{r+1} \otimes (dv_t)^{\otimes \binom{r+1}2}$ (resp., $s_t \otimes dv_t = (s_1|_{U_t})^{r+1} \otimes (dv_t)^{\otimes \binom{r+1}2+1}$). 
Arguing similarly along a Nisnevich chart $\varphi_w: U_w \rightarrow \mb{A}^1_w$ obtained by restricting the linear projection $\mb{A}^2_{z,w} \rightarrow \mb{A}^1_w$ to an open subset $U_w \subset X \cap U_{\tau_2} \subset U_{\tau_2} \cong \mb{A}^2_{z,w}$ containing $\infty_X$ (which has coordinate $(0,0)$ in $\mb{A}^2_{z,w} \cong U_{\tau_2}$), we construct a trivialization $\psi_{\mc{E},w}$ (resp., $\psi_{\mc{H},w}$) on $\mc{E}$ (resp, $\mc{H}$) along $U_w$ with distinguished basis $s_w := (s_2|_{U_w})^{r+1} \otimes (dv_w)^{\otimes \binom{r+1}2}$ (resp., $s_w \otimes dv_w = (s_2|_{U_w})^{r+1} \otimes (dv_w)^{\otimes \binom{r+1}2+1}$).

\medskip
To check that the local trivializations $\psi_{\mc{E},t}$ and $\psi_{\mc{E},w}$ are compatible (see Definition~\ref{def:compatible}) along their respective Nisnevich 
charts $\varphi_t$ and $\varphi_w$, it suffices to check that the distinguished bases of $\psi_{\mc{H},t}$ and $\psi_{\mc{H}_w}$ are squares. 
Indeed, this is the case because the distinguished bases $s_t \otimes dv_t$ and $s_w \otimes dv_w$ exactly correspond to the maps that send $v_t$ and $v_w$ to $s_t$ and $s_w$, respectively; and Definition~\ref{def:compatible} imposes conditions on such maps. In particular, we seek conditions 
that ensure that both $r+1$ and $\binom{r+1}{2}+1$ are even in order to guarantee the compatibility condition is met.\footnote{Note that $s_1$ and $s_2$ need not themselves be squares.} 
When $\ell \le g$, we have $r+1=\ell + 1$, 
so our condition amounts to requiring that $\ell \equiv 1 \pmod{4}$. Similarly, when $\ell > g$, we have $r+1 = 2\ell-g+1$, so our condition amounts to requiring that $2\ell-g \equiv 1 \pmod{4}$. See Remark~\ref{rmk:compatibility} below for further discussion of these conditions.

\medskip
Sometimes it is useful to remember the transition maps between local trivializations of $\mc{E}$, as it is often easier to represent sections of $\mc{E}$ under a desired local trivialization by using transition maps. Since the local Euler index of a nontrivial section $\sigma \in H^0(\mc{E})$ only depends on the power series representation (see discussions above Lemma~\ref{lem:locEuler}), we can apply this method to find the power series representation of $\sigma$ under a desired Nisnevich local trivialization from a chosen \'etale local trivialization (meaning a choice of \'etale local coordinate together with a compatible local trivialization).
The transition map $\psi_{\mc{E},\tilde t} \circ \psi_{\mc{E},t}^{-1}$ that relates a local trivialization with uniformizer $t$ to another local trivialization with uniformizer $\tilde t$ is multiplication by $\left(\frac{Dt}{d\tilde t}\right)^{\binom{r+1}{2}}$ as $s_t|_{U_t \cap U_{\tilde t}}=\left(\frac{Dt}{d\tilde t}\right)^{\binom{r+1}{2}} s_{\tilde t}$ in $\mc{E}|_{U_t \cap U_{\tilde t}}$. 
Hereafter, a local Nisnevich/\'etale coordinate $t$ means a local Nisnevich/\'etale coordinate with a uniformizer $t$ (which is either a $F$-linear combination of $x$ and $y$, or $t=w$) that is equipped with a compatible local trivialization.

\begin{rem}\label{rmk:compatibility}
    Bachmann and Wickelgren explain in \cite[p.17--18]{BW} that any given local trivialization (without any conditions on $\ell$ and $g$) may be modified to a compatible local trivialization. Doing so involves 
    rescaling the local trivialization by an appropriate scalar function; without precisely identifying this scalar function, however, it is difficult to compute the relevant transition maps explicitly. Consequently, we always assume in this section that either (1) $\ell \le g$ and $\ell \equiv 1 \text{ (mod }4)$, or (2) $\ell > g$ and $2\ell-g \equiv 1 \pmod{4}$, though it would be nice to know whether there is a natural way to overcome this obstacle.
\end{rem}

\subsection{Inflectionary numerology}\label{subsec:numerology}

As we have detailed in the preceding subsections, we work exclusively over fields of characteristic not equal to 2 in order to ensure the existence of affine models $y^2=f(x)$ for hyperelliptic curves; and for the purposes of calculating local Euler indices, we work with complete linear series $|2\ell \infty_X|$ with either (1) $\ell \le g$ and $\ell \equiv 1 \text{ (mod }4)$, or (2) $\ell > g$ and $2\ell - g \equiv 1 \pmod{4}$, to ensure the existence of compatible Nisnevich trivializations. A slightly more obscure numerical hypothesis is operative in the global arithmetic inflection formula of Corollary~\ref{thm:global_Euler_class_inflection}. Namely, linear series defined over fields of positive characteristic may be {\it inflected in every geometric point} of the underlying curve. When this happens, the linear series is of {\it (Frobenius) non-classical} type; such linear series play a central role in the approach of St\"ohr--Voloch \cite{SV} to refinements of the Hasse--Weil bound for the number of $\mb{F}_q$-rational points on curves defined over $\mb{F}_q$.\footnote{Note that a $g^r_d$ defined over a field $F$ of characteristic $p$ is never of non-classical type when $p>d$; see \cite[Cor. 1.8]{SV}.} Celebrated examples include Homma's {\it funny} plane curves defined over $\mb{F}_p$ with equations $y^d=\sum_{i=1}^{d-1} x^i z^{d-i}$ for every prime power $d=p^n$, $n \in \mb{Z}_{>0}$ \cite[Thm. 3.4]{Hom}. It would be interesting to explicitly identify all base fields $F$ not of characteristic 2 for which $|2\ell \infty_X|$ is of non-classical type on the hyperelliptic curve $y^2=f(x)$; we leave this to future work.

\section{Local arithmetic inflection formulae}\label{sec:infl}

In this section, we calculate local arithmetic inflection indices following the basic template established in Section~\ref{sec:blueprint}. In order to do so, we require one additional technical ingredient, which we now introduce.

\subsection{Hasse derivatives}\label{subsec:Hasse_Witt}
A characteristic feature of geometry in characteristic $p>0$ is the existence of nontrivial functions $x^p \in F[x]$ whose derivatives vanish to arbitrary orders. To 
adjust for this, we instead use {\it Hasse derivatives}; for the general theory, see \cite[Def. 1.1]{Vojta} (where a different nomenclature is used; our usage follows \cite{SV}). 

\medskip
Given a polynomial $f = a_n x^n + \ldots + a_1 x + a_0\in F[x]$, its {\it $k$-th Hasse derivative} is
\begin{equation}\label{hasse_derivative}
\frac{D^{k}f}{dx} := \displaystyle \mathlarger{\sum}_{i=k}^n \binom{i}{k} a_i x^{i-k} \in F[x].
\end{equation}
Note that $k! \frac{D^{k}f}{dx}$ is simply the usual $k$-th derivative of $f$; \eqref{hasse_derivative} gives an alternative way to differentiate that is well-suited to positive characteristic. There is also a  version of Taylor's formula for Hasse derivatives; namely,
\[
f = \mathlarger{\sum}_{i=0}^n \frac{D^{i}f}{dx}(a) (x-a)^i.
\]

\medskip
Hasse derivatives may be defined, more generally, for elements of an arbitrary $F$-algebra $R$. Analogues of the key properties of usual derivatives exist for their Hasse counterparts. For example, in place of the usual Leibniz rule for derivatives of products we have 
    \begin{equation} \label{eq:Hasse_Leibniz}
        \frac{D^{k}(fg)}{dz}=\mathlarger{\sum}_{i=0}^k \frac{D^{i}f}{dz}\cdot \frac{D^{k-i}g}{dz}
    \end{equation}
    and its long-form generalization
  \begin{equation} \label{eq:Hasse_Leibniz_longer}
        \frac{D^{k}}{dz}\mathlarger{\prod}_{j=1}^e f_j =\mathlarger{\sum}_{\substack{i_1+\dotsb i_e=k\\i_m \ge 0 \text{ for all }i_m}}\; \mathlarger{\prod}_{j=1}^e \frac{D^{i_j}f_j}{dz} \; .
    \end{equation}  
Similarly, 
{\it Fa\`a di Bruno's chain rule} for Hasse derivatives establishes that given 
$g \in R$ together with a choice of $f \in R[x]$, interpreted as a set-theoretic map $f: R \rightarrow R$ (which need not even be $F$-linear), we have 
\begin{equation} \label{FdB}
\frac{D^k(f \circ g)}{dz} = \mathlarger{\sum}_{\substack{\sum_{i=1}^k i c_i=k \\ c_i \geq 0 \text{ for all }i}} \binom{c_1+\cdots+c_k}{c_1,\dots,c_k} \cdot 
\left(\frac{D^{c_1+\cdots+c_k}(f)}{dx} \circ g \right) \cdot \mathlarger{\prod}_{j=1}^k \left(\frac{D^j g}{dz}\right)^{c_j}.
\end{equation}

\subsection{Hasse principal parts}\label{subsec:infl_jet} 
Next, given a line bundle $L$ on a curve $X$ and a nonnegative integer $k$, let $J^{k+1}(L)$ denote the bundle of $k$-th order {\it (Hasse) principal parts} associated with $L$. 
Its sections may be thought of $k$-th order (truncated) Taylor series expansions of sections of $L$; and it may be constructed by specifying local trivializations and transition maps between them, which glue to form $J^{k+1}(L)$. For lack of a suitable reference, we carry this out now.

\medskip
We start with local trivializations. Accordingly, 
let $U \sub X$ denote an open affine subset, 
and let $\psi_{L_U}: L_U \rightarrow \mc{O}_U$ be a local trivialization of $L$ restricted to $U$. This local trivialization singles out a distinguished basis $s:=\psi_{L_U}^{-1}(1)$. 
Shrinking $U$ if necessary, choose a local section $z \in H^0(\mc{O}_U)$ with nowhere-vanishing differential $dz \in H^0(K_U)$. The latter naturally prescribes a local trivialization $\psi_{K_U}: K_U \rightarrow \mc{O}_U$, along with a distinguished basis $dz$.\footnote{Note that the basis $dz$ need not coincide with the dual distinguished basis of $K_U$ 
induced by a local Nisnevich chart $\varphi: U \rightarrow \mb{A}^1_F$.} These local trivializations 
realize $L_U$ and $K_U$ as free $\mc{O}_U$-modules of rank 1 with specified generators $s$ and $dz$, respectively. 
Using the pair of distinguished bases $(s,dz)$, we now specify a local trivialization of $J^{k+1}(L)|_U$ via the map 
\[
    \psi^{s,dz}_{J^{k+1}(L)}: \bigoplus_{i=0}^k L_U \otimes K_U^{\otimes i} \rightarrow \mc{O}_U^{\oplus k+1}
\]
induced by direct sums and tensor powers of $\psi_{L_U}$ and $\psi_{K_U}$. The associated distinguished basis is then $(s \otimes dz^{\otimes i})_{i=0,\dotsc,k}$.

\medskip
We next describe the transition maps between local trivializations of $J^{k+1}(L)$ as above. Accordingly, 
let $(U_1,s_1,dz_1)$ and $(U_2,s_2,dz_2)$ denote triples of open sets $U_j$ together with pairs of distinguished bases of $L$ and $K_X$. We will glue $\oplus_{i=0}^k L_{U_j} \otimes K_{U_j}^{\otimes i}$ for $j=1,2$ over the intersection $W:=U_1 \cap U_2$ via a transition map $\zeta : \mc{O}_W^{\oplus k+1} \rightarrow \mc{O}_W^{\oplus k+1}$ that changes the coordinates on $(s_1 \otimes dz_1^{\otimes e})_{e=0,\dotsc,k}$ to the coordinates on $(s_2 \otimes dz_2^{\otimes e})_{e=0,\dotsc,k}$. 
To this end, shrink $U_1$ and $U_2$ (in the process shrinking $W$) so that for each $j=1,2$, the set of local sections of the form $\left( \frac{D^i f}{dz_j} \right)_{i=0,\dotsc,k}$ 
with $f \in H^0(\mc{O}_W)$ generate $\mc{O}_W^{\oplus k+1}$ as an $\mc{O}_W$-module. (To see that this is possible, 
let $p \in W$ and apply Nakayama's lemma at $p$ to the submodule of $\mc{O}_W$ generated by such sections) Now define $\zeta$ so that  $\left(\psi^{s_2,dz_2}_{J^{k+1}(U)}\right)^{-1} \circ \zeta \circ \psi^{s_1,dz_1}_{J^{k+1}(U)}$ (which is only defined over $W$) maps $\left(s_1 \otimes \frac{D^i f}{dz_1}dz_1\right)_{i=0,\dotsc,k}\in \oplus_{i=0}^k L_W \otimes K_W^{\otimes k+1}$ to $\left(s_2 \otimes \frac{D^i (fg)}{dz_2}dz_2\right)_{i=0,\dotsc,k}$. As an element of ${\rm GL}_{k+1}(\mc{O}_W)$, the first three rows of $\zeta$ are as follows: 
\[
    \begin{pmatrix}
        g & 0 & 0 & 0 & \cdots & 0\\
        \frac{D^1g}{dz_2} & g\frac{D^1z_2}{dz_1} & 0 & 0 & \cdots & 0\\
        \frac{D^2g}{dz_2} & \frac{D^1z_1}{dz_2} \frac{D^1g}{dz_1} + \frac{D^2g}{dz_1} & g\left(\frac{D^1z_1}{dz_2}\right)^2 & 0 & \cdots & 0
    \end{pmatrix}
\]
where $s_1=g \cdot s_2$. Indeed, these row vectors reflect the following consequences of the Hasse Leibniz rule \eqref{eq:Hasse_Leibniz_longer} and Fa\`a di Bruno's chain rule \eqref{FdB}:  
\begin{align*}
    fg &= f \cdot g\\
    \frac{D^1(fg)}{dz_1} &= f\cdot \frac{D^1g}{dz_2} + \frac{D^1f}{dz_1} \cdot \left( \frac{D^1z_1}{dz_2} \right) \cdot g\\
    \frac{D^2(fg)}{dz_1} &= f\cdot \frac{D^2g}{dz_2} + \frac{D^1f}{dz_1} \cdot \left( \frac{D^1z_1}{dz_2} \frac{D^1g}{dz_1} + \frac{D^2g}{dz_1} \right) +  \frac{D^2f}{dz_1} \cdot \left(\frac{D^1z_1}{dz_2}\right)^2 \cdot g
\end{align*}
To conclude that $J^{k+1}(L)$ is well-defined, it remains to explicitly check that its transition maps satisfy the cocycle condition, using \eqref{eq:Hasse_Leibniz_longer} and \eqref{FdB}; we leave this to the reader.

\medskip
The particular shape of the matrix representation of a transition map $\zeta$ leads to the following proposition:

\begin{prop}\label{prop:jet_exact_seq}
    The sequence \eqref{eq:jet_exact_seq} is short exact:
    \[
        0 \ra L \otimes K_X^{\otimes k} \ra J^{k+1}(L) \ra J^k(L) \ra 0
    \]
\end{prop}

\begin{proof}
    The fact that the $k$-th Hasse derivatives of $fg$ 
    may be expressed as a polynomial in the $e$-th derivatives of $f$ and $g$ for $e \le k$ implies that the matrix $\zeta$ is lower triangular. Moreover, 
    applying \eqref{eq:Hasse_Leibniz_longer} and \eqref{FdB} again to the $(e+1)^{\mathrm{th}}$ row of $\zeta$, we see that the corresponding diagonal entry is $g\left(\frac{D^1 z_1}{dz_2}\right)^e$, which is precisely the transition function $\zeta_e$ of $L \otimes K_X^{\otimes e}$ that identifies distinguished bases $s_1 \otimes dz_1^{\otimes e}$ and $\zeta_e \cdot (s_2 \otimes dz_2^{\otimes e})$. The exactness of \eqref{eq:jet_exact_seq} follows. 
\end{proof}

\begin{rem}\label{rmk:usual_jet}
    Recall that the $e$-th Hasse derivative $\frac{D^ef}{dx}$ is $\frac{1}{e!}\frac{d^ef}{dx}$ when the characteristic of $F$ is either zero or larger than $e$. Consequently, replacing every instance of Hasse derivative that appears in $\zeta$ by the usual derivative has the effect of rescaling the $(e+1)$-th row of $\zeta$ by $e!$; in particular, the factor $e!$ is independent of the choice of local trivializations of $J^{k+1}(L)$. This new matrix $\zeta^{\pr}$ coincides with the transition map for the usual bundle of $k$-th order principal parts of \cite[Exp. II, App. II.1.2.4]{SGA6}. 
\end{rem}

\subsection{Hasse Wronskians revisited}\label{subsec:Hasse_Wronskian}

Let $(L,V)$ be a $g^r_d$ on a curve $X$ as in \S \ref{subsec:lin_series_hypell}. We first describe the evaluation map \eqref{evaluation_morphism} via the local trivializations given in \S \ref{subsec:infl_jet}. These enable us 
to describe $\mathrm{ev}(v)$ as a global section of $J^{r+1}(L)$, which we then $\mc{O}_X$-linearly extend to realize $\mathrm{ev}$ as a global section of $\mathcal{H}\mathrm{om}(V \otimes \mc{O}_X, J^{r+1}(L))$.

\medskip
Now choose any section $v \in V \subset H^0(L)$, and choose any triple $(U,s,dz)$ as in \S \ref{subsec:infl_jet} that represents a local trivialization $\psi^{s,dz}_{J^{r+1}(L)}:J^{r+1}(L)|_U \rightarrow \mc{O}_U^{\oplus r+1}$. Set $f=\psi_{L_U}(v)$; this means that $v=fs$ in $H^0(L|_U)$. We now define $\mathrm{ev}(v)$ on $U$ via $\psi^{s,dz}_{J^{r+1}(L)}(\mathrm{ev}(v)) = \left( \frac{D^if}{dz} \right)_{i=0,\dotsc,r}$. 
Using the 
transition maps $\zeta$ of \S \ref{subsec:infl_jet}, 
we glue the local descriptions of $\mathrm{ev}(v)$ to produce a global section of $J^{r+1}(L)$. We finally extend $\mathrm{ev}(v)$ $\mc{O}_X$-linearly for every $v \in V$ to obtain $\mathrm{ev} \in \mathrm{Hom}(V \otimes \mc{O}_X, J^{r+1}(L))$. 

\medskip
Recall from \S \ref{subsec:lin_series_hypell} that associated to every local trivialization $\psi_\la:V \otimes \mc{O}_X \rightarrow \mc{O}_X^{r+1}$ with a chosen distinguished basis $\la$ is a homomorphism $W(\la)=(\psi^*_{\la})^{-1}(\mathrm{ev}) \in \mathrm{Hom}(\mc{O}^{r+1},J^{r+1}(L))$. Using the local descriptions of $\mathrm{ev}$ as above, we represent $W(\la)$ as a matrix in the coordinates $(U,s,dz)$; namely,
\begin{equation}\label{wronskian_matrix}
    \psi^{(s,dz)}_{J^{r+1}(L)} \circ W(\la)_U= \begin{pmatrix} \lambda_{0,s} & \cdots & \lambda_{r,s} \\ \dfrac{D^{1}\lambda_{0,s}}{dz} & \cdots & \dfrac{D^{1}\lambda_{r,s}}{dz} \\ \vdots & \ddots & \vdots \\ \dfrac{D^{r}\lambda_{0,s}}{dz} & \cdots & \dfrac{D^{r}\lambda_{r,s}}{dz} \end{pmatrix} \in M_{r+1}(\mc{O}_{U})
\end{equation}
where $\lambda_{i,s}:=\psi_{L_U}(\la_i)$. Taking the determinant of \eqref{wronskian_matrix} and gluing local trivializations together, we obtain the {\it Hasse Wronskian} $w(\la)$.

\medskip
We return now to the case of the complete linear series associated to  $L=\mc{O}_X(2\ell \infty_X)$ on a hyperelliptic curve $X$. In order to calculate the local Euler index $\mathrm{ind}_p w(\la)$ in a point $p$ of $X$, we apply the strategy outlined in \S \ref{subsec:loc_Euler_general}; in so doing, we replace $w(\la)$ by a local section defined along an affine line. Accordingly, choose a local Nisnevich chart $\varphi_\ga:U_\ga \rightarrow \mathrm{Spec}\: F[\ga]$ with adapted local trivializations $\psi_{L,\ga}$ and $\psi_{T_{U_\ga}}$ of $L$ and $T_X$, respectively, as in \S \ref{subsec:nisnevich_coords_and_projections}; here $\ga$ is either a $F$-linear combination of $x$ and $y$ or $w$, and $U_\ga$ contains $p$. By choosing an appropriate distinguished basis $s$ of $L|_{U_\ga}$, the top wedge of $\psi_{J^{r+1}(L)}^{(s,d\ga)}$ coincides with the local trivialization $\psi_{\mc{E},\ga}$ as in \S \ref{subsec:nisnevich_coords_and_projections}, where $\mc{E}:=\det J^{r+1}(L)$. Observing that $\ga$, up to $F$-translation, is a uniformizer for $\bar p:=\varphi_\ga(p)$\footnote{When $k(x)/F$ is a nontrivial extension, this notation is abusive and a base change is required; see Remark~\ref{rmk:geom_pt}.}, we 
now systematically replace each entry of \eqref{wronskian_matrix} by an appropriate approximation in $F[\ga]$.\footnote{More precisely, the error of approximation lies in $\mf{m}^{2n}$, where $\mf{m}$ is the maximal ideal of $p$, $Z:=(w(\la)=0)$, and $\mf{m}^n=0$ as an ideal of $\mc{O}_{Z,p}$.} Call this new matrix $W_\ga(\la) \in M_{r+1}(F[\ga])$, and let $w_\ga(\la)$ denote its determinant. We now have $\mathrm{ind}_p w(\la)=\mathrm{ind}_{\bar p}w_\ga(\la)$, which may be computed using Lemmas \ref{lem:locEuler} and \ref{lem:locEuler_bch}. 
The vanishing locus of $w(\la)$ defines the {\it Hasse inflection locus}, which agrees with the usual inflection locus in characteristic $0$.

\subsection{Local arithmetic Euler indices along $R_{\pi}$}\label{subsec:loc_ind_gen_ramif}
We will argue separately according to whether the underlying ramification point is $\infty_X$; and whether $\ell \leq g$ or $\ell > g$, which conditions the shape of the monomial basis \eqref{eq:monomial_basis}). 

\subsubsection{Local arithmetic Euler indices along $R_{\pi} \setminus \{\infty_X\}$}\label{subsubsec:loc_ind_gen_ramif_affine}
We begin by proving the Hasse-inflectionary analogue of \cite[Theorem 3.3]{BCG}. 

\begin{thm}\label{thm:infl_conc_ramif_1}
    Let $X$ denote a hyperelliptic curve defined over a field $F$ of characteristic $p \neq 2$ as above. Assume that $\ell \leq g$, in which case the complete linear series defined by $\mc{O}(2\ell \infty_X)$ has basis $\lambda=(1,x,x^2,\dotsc,x^{\ell})$. The zero locus of the Hasse Wronskian $w(\lambda)$ is $\binom{\ell+1}{2}$ times the ramification divisor $R_{\pi}$. Moreover, near any inflection point $p \neq \infty_X$ in $R_{\pi}$, we have 
    \[
    w(\la)=\left(\frac{Dx}{dz}\right)^{\binom{\ell+1}{2}}
    \]
    where $z$ is a local Nisnevich uniformizer in $p$.
\end{thm}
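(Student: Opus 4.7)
The plan is to reduce the problem to a change-of-coordinates formula for Hasse Wronskians, starting from a trivial computation in the $x$-chart. First, observe that $x$ is an \'etale coordinate on the open set $X\setminus R_{\pi}$, and there the Hasse Wronskian matrix of $\lambda=(1,x,\dotsc,x^{\ell})$ has entries $\frac{D^j x^k}{dx} = \binom{k}{j}x^{k-j}$, yielding an upper-triangular matrix with $1$'s on the diagonal. Hence $w_x(\lambda)\equiv 1$, so the zero locus of $w(\lambda)$ is contained in $R_{\pi}$.

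To obtain the local formula near a point $p\neq\infty_X$ of $R_{\pi}$, I would derive the transformation law relating $W_z(\lambda)$ to $W_x(\lambda)$. Fix a local Nisnevich uniformizer $z$ at $p$, and apply the Hasse--Taylor formula of \S\ref{subsec:Hasse_Witt} to write $x(z+w)=x(z)+\Delta(w)$, with $\Delta(w):=\sum_{j\geq 1}\frac{D^j x}{dz}(z)\cdot w^j$. Reading off the coefficient of $w^j$ in the identity
\[
x(z+w)^k = \sum_{i=0}^{k}\binom{k}{i}\,x(z)^{k-i}\,\Delta(w)^i
\]
yields a matrix factorization $W_z(\lambda)=M\cdot B$, where $M_{ji}=[w^j]\Delta(w)^i$ is lower-triangular with diagonal entries $M_{ii}=(dx/dz)^i$, and $B_{ik}=\binom{k}{i}x^{k-i}$ is upper-triangular with unit diagonal (indeed, $B$ is exactly $W_x(\lambda)$). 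Multiplicativity of the determinant then gives
\[
w_z(\lambda) = \prod_{i=0}^{\ell}\left(\frac{dx}{dz}\right)^{i} = \left(\frac{Dx}{dz}\right)^{\binom{\ell+1}{2}},
\]
since $0+1+\dotsb+\ell=\binom{\ell+1}{2}$.

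To control the vanishing order at each $(\gamma,0)\in R_{\pi}\setminus\{\infty_X\}$, implicit differentiation of $y^2=f(x)$ gives $dx/dy=2y/f'(x)$; smoothness of $X$ forces $f'(\gamma)\neq 0$, so $dx/dy$ has a simple zero at $p$ and hence so does $dx/dz$ in any Nisnevich uniformizer $z$. Thus $w(\lambda)$ vanishes to order exactly $\binom{\ell+1}{2}$ at every finite ramification point. A degree count then pins down the behavior at $\infty_X$: the line bundle $L^{\otimes\ell+1}\otimes K_X^{\otimes\binom{\ell+1}{2}}$ has degree $2\ell(\ell+1)+(2g-2)\binom{\ell+1}{2}=(2g+2)\binom{\ell+1}{2}$, which matches $\deg(\binom{\ell+1}{2}R_{\pi})$; having already accounted for $(2g+1)\binom{\ell+1}{2}$ at the finite Weierstrass points and $0$ off $R_{\pi}$, the residual vanishing of order $\binom{\ell+1}{2}$ must be concentrated at $\infty_X$, giving $(w(\lambda))=\binom{\ell+1}{2}R_{\pi}$ as claimed.

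The main obstacle is the Hasse Wronskian transformation law. In characteristic zero it follows from the classical chain rule plus row operations, but in arbitrary characteristic we are forced to argue via the Hasse--Taylor expansion. The factorization $W_z=MB$ above is the key technical ingredient, cleanly separating the trivially-computed $x$-Wronskian from the coordinate change $x\mapsto z$.
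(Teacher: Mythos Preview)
Your argument is correct and amounts to a cleaner repackaging of the paper's proof. The paper column-reduces $W_z(\lambda)$ via the long Hasse--Leibniz rule, introducing auxiliary columns $C_\gamma$ with entries $C_{k,\gamma}=\sum_{i_1+\cdots+i_\gamma=k,\ i_m\geq 1}\prod_j D^{i_j}x/dz$; these are exactly the columns of your matrix $M$, since $C_{k,\gamma}=[w^k]\Delta(w)^\gamma$, and the column operations the paper performs are precisely right-multiplication by $B^{-1}$. Your Hasse--Taylor factorization $W_z=M\cdot B$ makes this structure transparent and in particular explains conceptually why $B=W_x(\lambda)$ appears. One genuine difference in strategy: the paper does not pin down the order at $\infty_X$ within this proof but computes the Wronskian there directly in a companion result (using the $(z,w)$-chart on the Hirzebruch surface); your degree count is a valid and more economical way to settle the global divisor statement.
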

\begin{proof}
    We proceed as in \cite[Theorem 3.3]{BCG}, 
    using the Hasse Leibniz rule \eqref{eq:Hasse_Leibniz_longer} in place of the usual Leibniz rule. More precisely, we will show that the square matrix $W(\lambda)$ may be column--reduced to a lower triangular matrix for which the diagonal entry of the column indexed by each $\lambda_{i}(z)$ is $\left(\frac{Dx}{dz}\right)^i$; the theorem follows from the fact that away from $\infty$, $\frac{Dx}{dz}$ vanishes precisely to order 1 along the ramification divisor $R_{\pi}$.
    
    
    \medskip
   To this end, note that the first column of $W(\lambda)$, corresponding to $\lambda_{0}=1$, is zero in every entry except the first one, which is 1. Similarly, the second column, corresponding to $\lambda_{1}=x$, has entries $\frac{D^{k}x}{dz}$, $k \ge 0$. On the other hand, whenever $e \geq 2$, we may rewrite the entries of each column corresponding to $\lambda_{e}$ 
    using equation~\eqref{eq:Hasse_Leibniz_longer}, obtaining
    \begin{equation}\label{eq:Hasse_Leibniz_power}
        \frac{D^{k}(x^e)}{dz}=\mathlarger{\sum}_{\gamma=1}^e\binom{e}{\gamma}x^{e-\gamma}\mathlarger{\sum}_{\substack{i_1+\dotsb i_\gamma=k\\i_m \ge 1 \text{ for all }i_m}}\; \mathlarger{\prod}_{j=1}^{\gamma} \frac{D^{i_j}x}{dz}
    \end{equation}
    for every $k \geq 1$. Now define $C_{0}$ to be a column vector all of whose entries are zero except for the first entry $C_{0,0}=1$, and for every $\gamma \ge 1$, let $C_{\gamma}$ denote a column vector with entries
    \[
        C_{k,\gamma}=\mathlarger{\sum}_{\substack{i_1+\dotsb i_\gamma=k\\i_m \ge 1 \; \text{ for all }i_m}}\; \mathlarger{\prod}_{j=1}^{\gamma} \frac{D^{i_j}x}{dz} 
    \]
    for $k=0,\dotsc,\ell$. Note that each column $W(\la)_{e}$ of $W(\la)$ corresponding to the basis element $\lambda_{e}=x^e$ for $e \ge 1$ is spanned by linearly independent vectors $C_{\gamma}$ for $\gamma=0,\dotsc,e$. More precisely, we have
    \[
        W(\la)_e=\mathlarger{\sum}_{\gamma=0}^e\binom{e}{\gamma}x^{e-\gamma}C_{\gamma} \; .
    \]
    Using this, we immediately conclude that the matrix corresponding to $W(\lambda)$ column-reduces to a matrix with columns $C_{\gamma}$, $0 \le \gamma \le \ell$.
    
    {\fl On the other hand, note} that $C_{k,\gamma}=0$ when $1 \le k <\gamma$, as not all $i_{m}$, $m=1,\dotsc,\gamma$, satisfy $i_m \geq 1$. Since
    \[
        C_{\gamma,\gamma}=\mathlarger{\sum}_{\substack{i_1+\dotsb i_\gamma=\gamma\\i_m \ge 1 \text{ for all }i_m}}\; \mathlarger{\prod}_{j=1}^{\gamma} \frac{D^{i_j}x}{dz} =\mathlarger{\prod}_{j=1}^{\gamma}\frac{Dx}{dz}
    \]
    it follows that the diagonal entries of our column-reduced matrix are $(\frac{Dx}{dz})^{m}$, $0 \le m \le \ell$. Multiplying them together yields the desired local description of $w(\lambda)$.
\end{proof}

\begin{thm}\label{thm:euler_index_ramif_l_leq_g}
Let $X$ denote a hyperelliptic curve defined over a field $F$ of characteristic $p \neq 2$ as above. Whenever $\ell \leq g$ and $\ell \equiv 1 \pmod{4}$, the local Euler index of the complete linear series $|2\ell \infty_X|$ in ${\rm GW}(F)$ associated to a ramification point $(\ga,0) \in R_{\pi} \setminus \{\infty_X\}$ of the hyperelliptic projection $\pi: X \ra \mb{P}^1$ is given by
\begin{equation}\label{eq:Euler_ramif_l<=g}
    \mathrm{ind}_{(\gamma,0)}w(\lambda)=\mathrm{Tr}_{k(\gamma)/F}\left(\frac{\binom{\ell+1}{2}-1}{2}\cdot \mathbb{H}+\left<\frac{(D^1f)(\gamma)}{2}\right>\right).
\end{equation}
\end{thm}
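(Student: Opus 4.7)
The plan is to lift the computation to the completed local ring at $(\gamma,0)$, apply Theorem~\ref{thm:infl_conc_ramif_1} to reduce the Hasse Wronskian to an explicit power of $\frac{Dx}{dz}$ in a Nisnevich uniformizer $z$, and then invoke Lemma~\ref{lem:locEuler} (over $k(\gamma)$) followed by Lemma~\ref{lem:locEuler_bch} to descend the answer to $F$.

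First, since $F$ is perfect, the extension $k(\gamma)/F$ is separable; consequently, the generic linear projection described in \S\ref{subsec:compatible_Nis_gen} supplies a compatible Nisnevich chart around $(\gamma,0)$ with residue field $k(\gamma)$, and the hypothesis of Lemma~\ref{lem:locEuler_bch} is satisfied. After base change to $k(\gamma)$, the completed local ring at the point above $(\gamma,0)$ is $k(\gamma)[[z]]$, and $y$ is likewise a uniformizer there. Using $y^2=f(x)$ with $f(\gamma)=0$ together with the Taylor expansion $f(x)=(D^1 f)(\gamma)(x-\gamma)+O((x-\gamma)^2)$, solving for $x-\gamma$ in terms of $y$ gives $x-\gamma=y^2/(D^1 f)(\gamma)+O(y^3)$. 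Writing $y=c_1 z + c_2 z^2+\cdots$ with $c_1\in k(\gamma)^{\times}$ and substituting yields
\[
x-\gamma \;=\; \frac{c_1^2}{(D^1 f)(\gamma)}\,z^2 \;+\; O(z^3),
\qquad
\frac{Dx}{dz}\;=\;\frac{2c_1^2}{(D^1 f)(\gamma)}\,z\;+\;O(z^2),
\]
so that Theorem~\ref{thm:infl_conc_ramif_1} specializes to
\[
w(\lambda)\;=\;\left(\frac{Dx}{dz}\right)^{\binom{\ell+1}{2}}\;=\;z^{\binom{\ell+1}{2}}\left(\frac{2c_1^2}{(D^1 f)(\gamma)}\right)^{\binom{\ell+1}{2}}\bigl(1+O(z)\bigr).
\]

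Finally, the standing assumption $\ell\equiv 1\pmod 4$ forces $\binom{\ell+1}{2}$ to be odd (both $\ell$ and $(\ell+1)/2$ are odd), so Lemma~\ref{lem:locEuler} applies with odd multiplicity $m=\binom{\ell+1}{2}$ and leading coefficient $a=\bigl(2c_1^2/(D^1 f)(\gamma)\bigr)^m$. Since $m$ is odd and $c_1^{2m}$ is a square, and since $\langle u\rangle=\langle u^{-1}\rangle$ in ${\rm GW}(k(\gamma))$, we obtain $\langle a\rangle=\langle 2/(D^1 f)(\gamma)\rangle=\langle (D^1 f)(\gamma)/2\rangle$. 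Lemma~\ref{lem:locEuler} then delivers the local Euler index
\[
\frac{\binom{\ell+1}{2}-1}{2}\cdot\mb{H}+\left\langle \frac{(D^1 f)(\gamma)}{2}\right\rangle
\]
over $k(\gamma)$, and applying Lemma~\ref{lem:locEuler_bch} produces formula~\eqref{eq:Euler_ramif_l<=g}. The only place requiring care is verifying that the outcome is independent of the choice of Nisnevich uniformizer, but this is automatic because the ambiguity in $c_1$ enters only through the even exponent $2m$ on the leading unit.
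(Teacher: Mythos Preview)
Your proof is correct and follows the same overall strategy as the paper: reduce via Theorem~\ref{thm:infl_conc_ramif_1} to the power series expansion of $Dx/dz$ in a Nisnevich uniformizer, then apply Lemmas~\ref{lem:locEuler} and \ref{lem:locEuler_bch}. The only difference is computational: the paper works in the explicit chart $u_b=y-bx$ and obtains $\frac{D^1x}{du_b}=\frac{2(u_b+b\gamma)}{(D^1f)(\gamma)}+O((u_b+b\gamma)^2)$ by implicit differentiation of $(u_b+bx)^2=f(x)$ (so the leading unit is exactly $2/(D^1f)(\gamma)$, with no $c_1$), whereas you factor through the étale uniformizer $y$ and then pass to an unspecified Nisnevich $z$, carrying along the unit $c_1$ and observing it disappears as a square. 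Your route makes the independence of the Nisnevich chart manifest; the paper's route is more concrete and avoids the auxiliary change of variables.
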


\begin{proof}
To compute the local Euler index at a ramification point $(\gamma,0) \in R_{\pi}$ using Theorem~\ref{thm:infl_conc_ramif_1}, we first need to understand the local description of $w(\lambda)$ when $z=u_b=y-bx$. In this case, the affine equation $y^2=f(x)$ for $X$ becomes $(u_b+bx)^2=f(x)$, and differentiating both sides of the latter equation with respect to $u_b$ yields
\begin{align*}
    2(u_b+bx)\left(1+b\frac{D^1x}{du_b}\right)&=(D^1f)(x)\cdot \frac{D^1x}{du_b}.
\end{align*}
After collecting terms involving $\frac{D^1x}{du_b}$, we obtain
\begin{equation}\label{eq:derivative_of_x}
    \frac{D^1x}{du_b}=\frac{2(u_b+bx)}{(D^1f)(x)-2(u_b+bx)b}.
\end{equation}
Here $x(u_b)=\gamma$ when $u_b=-b\gamma$. Note that $\frac{D^1x}{du_b}$ vanishes at $u_b=-b\gamma$ to order 1, in light of the facts that $f(\ga)=0$ and $D^1f(\gamma) \neq 0$, and that the power series expansion of $\frac{D^1x}{du_b}$ is of the form
\begin{equation}\label{eq:power_series_first_derivative}
    \frac{D^1x}{du_b}=\frac{2(u_b+b\gamma)}{(D^1f)(\gamma)}+h(u_b)(u_b+b\gamma)^2
\end{equation}
where $h \in F\llbracket u_b\rrbracket$. Applying Lemma~\ref{lem:locEuler} and \ref{lem:locEuler_bch} in tandem with Theorem~\ref{thm:infl_conc_ramif_1}, \eqref{eq:Euler_ramif_l<=g} follows.
\end{proof}

\medskip
\noindent Now suppose that $\ell>g$. It is difficult to directly compute $w(\lambda)$ with respect to a local Nisnevich coordinate. To circumvent this difficulty, we first identify the lowest-order terms of $w(\lambda)$ with respect to the \'etale coordinate $y$. In formulating this result, which refines \cite[Theorem 5.7]{BCG}, we let $M(\ell,g)$ denote the $(g+1) \times (g+1)$ matrix with entries $M_{ij}= \binom{\ell-g+j}{2j-i}$, $0 \leq i,j \leq g$. 

 \begin{thm}\label{thm:infl_conc_ramif}
    Assume that $\ell > g$, and let $\la:=(1,y,\dotsc,x^{\ell-g-1},x^{\ell-g-1}y;x^{\ell-g},x^{\ell-g+1},\dotsc,x^\ell)$ denote the corresponding monomial basis of the complete linear series $|\mc{O}(2\ell \infty_X)|$, where $\mathrm{char}(F) \neq 2$ and $\det M(\ell,g)$ is nonzero in $F$. 
    With respect to the local \'etale coordinate $y$, the lowest-order term of the Hasse Wronskian $w(\lambda)$ in a ramification point $(\ga,0) \in R_{\pi} \setminus \{\infty_X\}$ is that of
    \[
    \det M(\ell,g) \cdot \left(D^1_y x\right)^{\binom{g+1}{2}} (D^2_y x)^{\ell(\ell-g)}
    \]
    where $D^i_yx= \frac{D^i x}{dy}$. 
\end{thm}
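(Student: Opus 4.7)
The plan is to extract the lowest-order term of $w(\lambda)$ in $y$ at $(\gamma, 0)$ in three steps.

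First, I would perform upper-triangular column operations with unit diagonals (hence preserving $w(\lambda)$) to replace each $x^i$ in $\lambda$ by $\tilde x^i := (x-\gamma)^i$ and each $x^i y$ by $\tilde x^i y$. Inverting $y^2 = f(x)$ locally at $(\gamma, 0)$ gives $x-\gamma = c_1 y^2 + c_2 y^4 + \cdots$ with $c_1 = 1/f'(\gamma)$; so each $\tilde x^m$ is an even power series in $y$ of vanishing order $2m$ with leading coefficient $c_1^m$, while each $\tilde x^m y$ is an odd power series of order $2m+1$ with the same leading coefficient. Sorting the new basis by vanishing order produces $(a_i)_{i=0}^{r} = (0, 1, 2, \ldots, 2(\ell-g), 2(\ell-g)+2, \ldots, 2\ell)$ with $r = 2\ell - g$, and a direct computation gives $\sum_i a_i - \binom{r+1}{2} = \binom{g+1}{2}$.

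Second, I would extract the leading term of $w(\lambda) = \det(D^k_y v_i)_{k, i = 0}^r$, where $v_i = c_i y^{a_i} + O(y^{a_i+1})$. For each permutation $\sigma$, the product $\prod_i D^{\sigma(i)}_y v_i$ has $y$-order at least $\sum_i\max(a_i - \sigma(i), 0)$; equality with nonzero leading coefficient $\binom{a_i}{\sigma(i)} c_i$ holds precisely when $\sigma(i)\le a_i$ for every $i$, in which case the order is $\sum_i(a_i - \sigma(i)) = \binom{g+1}{2}$ (independent of $\sigma$, since $\sum_i\sigma(i) = \binom{r+1}{2}$). Permutations violating $\sigma(i) \le a_i$ contribute only strictly higher-order terms, so the leading coefficient of $w(\lambda)$ at $y^{\binom{g+1}{2}}$ is $(\prod_i c_i) \cdot \det(\binom{a_i}{k})_{k, i = 0}^r$, with $\prod_i c_i = c_1^{\binom{\ell+1}{2}+\binom{\ell-g}{2}}$.

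Third, I would reduce this last determinant. The sub-matrix indexed by $k, i \le 2(\ell - g)$ (where $a_i = i$) is upper-triangular with unit diagonal, and the complementary block in rows $k > 2(\ell - g)$, columns $i \le 2(\ell - g)$ vanishes; cofactor expansion then gives $\det(\binom{a_i}{k}) = \det\tilde M$, where $\tilde M$ is the $(g+1)\times(g+1)$ matrix with $\tilde M_{ij} = \binom{2(\ell-g+j)}{2j - i}$. On the other hand, since $(D^1_y x)^{\binom{g+1}{2}}(D^2_y x)^{\ell(\ell-g)}$ has leading term $2^{\binom{g+1}{2}} c_1^{\binom{g+1}{2}+\ell(\ell-g)} y^{\binom{g+1}{2}}$, matching the two sides of the theorem reduces (after checking the $c_1$-exponents cancel) to the combinatorial identity
\[
\det\tilde M = 2^{\binom{g+1}{2}}\det M(\ell, g).
\]
I would establish this identity by applying the Vandermonde convolution $\binom{2n}{k} = \sum_a\binom{n}{a}\binom{n}{k-a}$ to each entry of $\tilde M$ and then invoking the Lindström--Gessel--Viennot lemma to identify both $\det M$ and $\det\tilde M$ as counts of nonintersecting lattice paths in the plane, with the factor $2^{\binom{g+1}{2}}$ arising from the splitting choice at each of $\binom{g+1}{2}$ ``middle'' points.

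The main obstacle I foresee is the combinatorial identity in Step 3; the first two steps are formal manipulations once the leading-order formula for Hasse Wronskians is in hand.
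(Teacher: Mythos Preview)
Your approach is sound and genuinely different from the paper's. The paper never expands $x$ as a power series in $y$; instead it works symbolically with the Hasse derivatives $D^i_y x$, applies Fa\`a di Bruno's formula to rewrite each $D^k_y x^j$ as a polynomial in the $D^i_y x$, and then column-reduces the Wronskian matrix so that the entry in the column for $x^{\ell-g+j}$ becomes a sum over partitions of $k$ into exactly $\ell-g+j$ parts. Reading off the minimal power of $D^1_y x$ in each entry (via a ``tropical permanent'' argument on the bottom-right $(g+1)\times(g+1)$ block), the paper finds that the lowest-order contribution comes from the monomials $(D^1_y x)^{2j-i}(D^2_y x)^{\ell-g+i-j}$, whose multinomial coefficients are exactly $\binom{\ell-g+j}{2j-i}=M(\ell,g)_{ij}$. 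So $M(\ell,g)$ appears directly, with no auxiliary identity needed.

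Your route, by contrast, produces the binomial matrix $\tilde M$ with entries $\binom{2(\ell-g+j)}{2j-i}$, and everything then hinges on the identity $\det\tilde M=2^{\binom{g+1}{2}}\det M(\ell,g)$. This identity is correct (one checks it for small $g$, and it follows a posteriori from the theorem together with your Steps~1--2), but your Vandermonde-plus-LGV sketch is not yet a proof: splitting each path of length $2(\ell-g+j)$ at its midpoint does not land the midpoints on the sink set $\{(2j,\ell-g-j)\}$ used for $M(\ell,g)$, and it is unclear how a clean factor of $2^{\binom{g+1}{2}}$ emerges from a bijection on nonintersecting families. You would need either a genuine bijective argument or a direct evaluation of both determinants via a binomial-determinant formula. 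In short, the paper's symbolic Fa\`a di Bruno approach trades your elementary power-series manipulations for avoiding this nontrivial combinatorial step; your Steps~1--2 are cleaner, but the real difficulty is deferred to Step~3, which as written is only a heuristic.
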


\begin{rem}\label{rem:Gessel--Viennot}
Let $\ell_1$ and $\ell_2$ denote the lines $x+y=0$ and $2y+x=2\ell-2g$ in the $xy$-plane, respectively. Let $a_i$ (resp., $b_i$) denote the point of $\ell_1$ (resp., $\ell_2$) with coordinates $(i,-i)$ (resp., $(2i,\ell-g-i)$), $i=0,\dots,g$.
The Gessel--Viennot lemma \cite{GV} implies that (the integer underlying) $\det M(\ell,g)$ is equal to the number of non-intersecting lattice paths connecting the $(g+1)$-tuple of points $a_i$, $i=0,\dots,g$ with the $(g+1)$-tuple of points $b_i$, $i=0,\dots,g$.
\end{rem}

\begin{proof}
An outline of the proof is as follows. We begin by deriving an {\it inflectionary basis} $\la^{\ga}$ adapted to the ramification point $(\ga,0)$ from our usual basis $\la$ of global sections. This inflectionary basis, in turn, naturally singles out a row-reduction of the corresponding Hasse Wronskian matrix $W(\la^{\ga})$. Every entry of the row-reduced version of $W(\la^{\ga})$ is a sum of monomials in Hasse derivatives $D^j_y x$ of $x$ with respect to $y$; and its contribution to the lowest order terms of its determinant is itself a matrix of monomials in $D^1_y x$ and $D^2_y x$ whose associated matrix of coefficients is $M(\ell,g)$. 

\medskip
More explicitly now, note that in each ramification point $(\ga,0)$, 
\begin{equation}\label{inflectionary_basis}
\la^{\ga}:=(1, y, (x-\ga), (x-\ga)y, \dots, (x-\ga)^{\ell-g-1}, (x-\ga)^{\ell-g-1}y; (x-\ga)^{\ell-g}, (x-\ga)^{\ell-g+1}, \dots, (x-\ga)^{\ell})
\end{equation}
determines an inflectionary basis for $\mc{O}(2\ell \infty_X)$, i.e., the elements of $\la^{\ga}$ are ordered according to their orders-of-vanishing in the uniformizer $y$, which are strictly increasing. Note that for each $i$, the $i^{\mathrm{th}}$ element of $\la^{\ga}$ is a linear combination of the first $i$ elements of $\la$ in which the $i^{\mathrm{th}}$ element of $\la$ appears with coefficient one; as a result, the Hasse Wronskians of $\la$ and $\la^{\ga}$ are equal. 

\medskip
As a result, we may assume without loss of generality that $\ga=0$. 
Note first that the Hasse Leibniz rule \eqref{eq:Hasse_Leibniz} implies
\[
D^k_y (x^j y)= D^{k-1}_y x^j+ y D^k_y x^j
\]
for all positive integers $j$ and $k$; it follows easily that the column of $W(\la)$ indexed by $x^j y$ may be replaced by a column whose $k$th entry (when counting starting from 0 at the top of the column) is $D^{k-1}_y x^j$, for every $j=0,\dots,\ell-g-1$. 
Computing the Hasse derivative of powers of $x$ with respect to $y$ via Fa\`a di Bruno's chain rule for Hasse derivatives \eqref{FdB}, we find that
\begin{equation}\label{derivative_of_x^j}
D^k_y x^j= \mathlarger{\sum}_{\substack{\sum_{i=1}^k i c_i=k\\ c_i \geq 0 \text{ for all }i}} \binom{c_1+\cdots+c_k}{c_1,\dots,c_k} 
\binom{j}{c_1+\dots+c_k} x^{j-(c_1+\dots+c_k)} \cdot \mathlarger{\prod}_{i=1}^k (D^i_y x)^{c_i}.
\end{equation}
We claim that via further column reduction, all entries of the form $D^k_y x^j$ may be systematically replaced by
\begin{equation}\label{pure_derivative_columns}
\mathlarger{\sum}_{\substack{\sum_{i=1}^k i c_i=k \\ \sum_{i=1}^k c_i=j}} \binom{j}{c_1,\dots,c_k}
\mathlarger{\prod}_{i=1}^k (D^i_y x)^{c_i}
\end{equation}
or equivalently, that all terms in \eqref{derivative_of_x^j} indexed by $k$-tuples of non-negative integers $(c_1,\dots,c_k)$ with $\sum_{i=1}^k c_i<j$ may be ignored. Notice that this is vacuously true when $j=0,1$. To see this when $j>1$, fix a strictly positive integer $j^{\pr}<j$, and let $C_j$ (resp., $C_j^{\pr}$) denote any column whose $k$th entry is $D^k_y x^j$ (resp., $D^{k-1}_y x^j$); note that with the exception of the ``trivial" columns indexed by 1 and $y$, every column of $W(\la)$ is either of the form $C_j$ or $C_j^{\pr}$ for some $j$.

\medskip
We now iteratively define a sequence of column reductions, as follows. Let $C_{j;1}:= C_j-jx^{j-1} C_1$ and $C^{\pr}_{j;1}:= C^{\pr}_j-jx^{j-1} C^{\pr}_1$ for every $j=2,\dots,\ell$. Replace column $C_j$ (resp., $C_j^{\pr}$) by $C_{j;1}$ (resp., $C^{\pr}_{j;1}$) for every $j=2,\dots,\ell$; doing so eliminates all terms involving $x^{j-1}$. For convenience, continue labeling the columns of $W(\la)$ by $C_j$ and $C_j^{\pr}$, respectively. Now replace $C_j$ (resp., $C_j^{\pr}$) by $C_{j;2}:=C_j-\binom{j}{2}x^{j-2} C_2$ (resp., $C^{\pr}_{j;2}:=C^{\pr}_j-\binom{j}{2}x^{j-2} C^{\pr}_2$) for every $j=3,\dots, \ell$; doing so eliminates all terms involving $x^{j-2}$. Iterating this procedure, at the $m^\mathrm{th}$ step replace $C_j$ (resp., $C^{\pr}_j$) by $C_{j;m}:= C_j-\binom{j}{m}x^{j-m} C_m$ (resp., $C^{\pr}_{j;m}:= C^{\pr}_j-\binom{j}{m}x^{j-m} C^{\pr}_m$). After iterating $\ell$ times, the columns of $W(\la)$ are clearly as in \eqref{pure_derivative_columns}.

\medskip
The preceding analysis shows, in particular, that the $k$th entry of the column indexed by $x^{\ell-g+j}$, $j=0,\dots,g$ of (the column-reduced version of) $W(\la)$ is a sum of monomials $\prod_{i=1}^{2\ell-g} (D^i_y x)^{c_i}$ whose associated (appropriately reordered) exponent vector $(c_1,\dots,c_{2\ell-g})$ determines a partition $(i^{c_i})_i$ of weight $k$ with $\ell-g+j$ parts. Here we may assume $\ell-g+j \leq k \leq \ell$ without loss of generality; indeed, whenever $k<\ell-g+j$, there are no such partitions, and the corresponding entry of $W(\la)$ is zero.
Likewise, it is easy to see that none of the leftmost $(2\ell-2g)$ columns of $W(\la)$ are divisible by $D^1_y x$, but that their reductions modulo $D^1_y x$ are zero above the diagonal. Indeed, this follows from the fact that the only weight-$k$ partitions with $j$ parts for which $j \leq k \leq 2j-1$ have at least one singleton part.

\medskip
Note that modulo $D^1_y x$, the diagonal entries that appear among the leftmost $(2\ell-2g)$ columns of $W(\la)$ are precisely $1, 1, (D^2_y x), (D^2_y x), \dots, (D^2_y x)^{\ell-g-1}, (D^2_y x)^{\ell-g-1}$; their product is $(D^2_y x)^{(\ell-g)(\ell-g-1)}$. In order to conclude, we will more closely examine the rightmost $(g+1)$ columns of $W(\la)$, namely those indexed by $x^{\ell-g+i}$, $i=0,\dots,g$. Accordingly, let $M$ denote the $(g+1) \times (g+1)$ matrix obtained from the submatrix of $W(\la)$ determined by the bottom-most $(g+1)$ rows of the rightmost $(g+1)$ columns, recording the multiplicity with which $D^1_y x$ divides the corresponding entry of $W(\la)$, i.e., its ``$D^1_y x$-adic valuation''; and as a matter of convention consistent with this valuative perspective, we take this multiplicity to be $\infty$ whenever the corresponding entry of $W(\la)$ is zero. Let $M_i$, $i=0,\dots,g$ denote the $i$th column of $M$. Whenever it is finite, the $k$th entry of $M_i$ is precisely the minimal number of singleton parts of weight-$(2\ell-2g+k)$ partitions with $\ell-g+i$ parts, $k=0,\dots,g$. This minimal number, in turn, depends on how large the number of parts is relative to the weight. More precisely, whenever $2\ell-2g+k>2(\ell-g+i)$, i.e., when $k>2i$, the minimal number is clearly zero. On the other hand, whenever $k \leq 2i$, the minimal number is realized by the partition $(2^{\la_1},1^{\la_2})$ for which $\la_1+\la_2=\ell-g+i$ and $2\la_1+\la_2=2\ell-2g+k$; solving, we find $\la_1=\ell-g+k-i$ and $\la_2=2i-k$. Since we require $\la_1$ to be non-negative, it follows that whenever $k \leq 2i$, the $k$th entry of $M_i$ is either $2i-k$, with $i \leq \ell-g+k$; otherwise, it is $\infty$.

\medskip
To compute the multiplicity with which $D^1_y x$ divides $w(\la)$, the key point is that the tropical {\it permanent} ${\rm perm}(M)$ of $M$ with respect to addition and taking minima is precisely $\binom{g+1}{2}$.\footnote{That is, we compute the permanent of the matrix $M$ with entries in $\ov{\mb{N}}$ using the tropical operations $+$ and $\min$ in place of the usual operations of $\times$ and $+$, respectively; the tropical neutral element is $\infty$ instead of $0$.} Indeed, the tropical permanent of $M$ is bounded below by the tropical permanent of the matrix $M^{\pr}$ whose $(k,i)^{\mathrm{th}}$ entry is $2i-k$, for every $0 \leq i,k \leq g$. On the other hand, every $(g+1)$-tuple tropical product that contributes to ${\rm perm}(M^{\pr})$ is the same, namely $\binom{g+1}{2}$, which also gives the contribution of the diagonal to ${\rm perm}(M)$. 
Since $(k,i)^{\mathrm{th}}$ entry of $M'$ records $\lambda_2$ of the corresponding partition $(2^{\lambda_1},1^{\lambda_2})$ (where $\lambda_1,\lambda_2$ are allowed to be negative) of $2\ell-2g+k$ by $\ell-g+i$ parts, the determinant of the bottom-right $(g+1) \times (g+1)$ part of $W(\lambda)$ must be a constant times $(D^1_yx)^{\binom{g+1}{2}}(D^2_yx)^{(g+1)(\ell-g)}$. So we deduce that, up to a scalar multiple, the lowest-order term of $w(\la)$ is equal to $(D^1_y x)^{\binom{g+1}{2}} (D^2_y x)^{\ell(\ell-g)}$.

\medskip
More precisely, the lowest-order term of $w(\la)$ is equal to $(D^1_y x)^{\binom{g+1}{2}} (D^2_y x)^{\ell(\ell-g)}$ times a $(g+1) \times (g+1)$ determinant of Fa\`a di Bruno coefficients associated with monomials $(D^1_y x)^{c_1}(D^2_y x)^{c_2}$ in the (entries of) the lower right-hand $(g+1) \times (g+1)$ submatrix of $W_{\al}(\la)$ that contribute to the lowest-order term of $w(\la)$. According to the preceding two paragraphs, those monomials $(D^1_y x)^{c_1}(D^2_y x)^{c_2}$ in the $(k,i)$ entry of the lower right-hand $(g+1) \times (g+1)$ submatrix of $W(\la)$ that contribute are those for which $c_2=\ell-g+k-i$ and $c_1=2i-k$, respectively, whenever these quantities are nonnegative (if either $\ell-g+k-i$ or $2i-k$ is negative, there is no contributing monomial, and the associated coefficient is zero). According to \eqref{pure_derivative_columns}, this is precisely the $(k,i)$-entry of the matrix $M(\ell,g)$; the desired result follows.
\end{proof}

With Theorem~\ref{thm:infl_conc_ramif} in hand, we can compute the local Euler index of $|\mc{O}(2\ell \infty_X)|$ in a hyperelliptic ramification point whenever $\ell>g$.
\begin{thm}\label{thm:euler_index_ramif_l>g}
Let $X$ denote a hyperelliptic curve defined over a perfect field $F$ of characteristic not equal to 2 as above. Whenever $\ell>g$, $2\ell - g \equiv 1 \pmod{4}$, and $\det M(\ell,g)$ is nonzero in $F$, the local Euler index of the complete linear series $|2\ell \infty_X|$ in ${\rm GW}(F)$ associated to a ramification point $(\ga,0) \in R_{\pi} \setminus \{\infty_X\}$ of the hyperelliptic projection $\pi: X \ra \mb{P}^1$ is given by
\begin{align}
    &\mathrm{ind}_{(\gamma,0)}w(\lambda) \notag \\
    &=\begin{cases}
            \mathrm{Tr}_{k(\gamma)/F}\left(\dfrac{1}{2}\binom{g+1}{2}\cdot \mb{H}\right) & \text{if $\ell$ is even and $g \equiv 3 \pmod{4}$}\\
            \noalign{\vskip9pt}
            \mathrm{Tr}_{k(\gamma)/F}\left(\dfrac{\binom{g+1}{2}-1}{2}\cdot \mb{H} +\left< 2(\det M(\ell,g)) (D^1f)(\gamma) \right>\right) & \text{if $\ell$ is odd and $g \equiv 1 \pmod{4}$}.
        \end{cases} \label{eq:Euler_ramif_l>g}
\end{align}
\end{thm}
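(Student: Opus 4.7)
The plan is to combine Theorem~\ref{thm:infl_conc_ramif}, which gives the leading-order behaviour of $w(\lambda)$ in the \'etale coordinate $y$, with the trivialization-change recipe of \S\ref{subsec:compatible_Nis_gen} to obtain the power-series expansion of $w(\lambda)$ in a Nisnevich uniformizer $u_b = y - bx$, and then to apply Lemmas~\ref{lem:locEuler} and \ref{lem:locEuler_bch}. Throughout I would first work over $k(\gamma)$ and trace to $F$ at the end; separability of $k(\gamma)/F$ is automatic because smoothness of $X$ together with $\mathrm{char}(F)\neq 2$ forces $f$ to have simple roots.

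First I unpack the leading coefficient of the \'etale expression. The relation $y^2=f(x)$ and the implicit function theorem yield $x-\gamma = y^2/f'(\gamma) + O(y^4)$ at $(\gamma,0)$, so that
\[
D^1_y x = \frac{2y}{f'(\gamma)} + O(y^3), \qquad D^2_y x = \frac{1}{f'(\gamma)} + O(y^2).
\]
Plugging these into Theorem~\ref{thm:infl_conc_ramif}, the lowest-order term of $w(\lambda)$ in the trivialization attached to $y$ is
\[
\det M(\ell,g) \cdot \frac{2^{\binom{g+1}{2}}}{f'(\gamma)^{\binom{g+1}{2} + \ell(\ell-g)}} \cdot y^{\binom{g+1}{2}}.
\]

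Next I convert to the Nisnevich trivialization attached to $u_b$. According to \S\ref{subsec:compatible_Nis_gen}, the transition factor is $(Dy/du_b)^{\binom{\ell+1}{2}}$. Differentiating $y=u_b+bx$ and invoking the Hasse-derivative analogue of \eqref{d(x-r)}, we get $Dx/du_b = 2(u_b+bx)/(f'(x)-2b(u_b+bx))$, which vanishes at $(\gamma,0)$; hence $Dy/du_b = 1$ there, and the transition factor is a unit at the ramification point. Similarly, because $y = (u_b+b\gamma) + O((u_b+b\gamma)^2)$, the local change of variable $y \leftrightarrow u_b + b\gamma$ has Jacobian $1$ at $(\gamma,0)$, and thus preserves both the order of vanishing and the leading coefficient. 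So the lowest-order term of $w(\lambda)$ in the $u_b$-trivialization is $\det M(\ell,g) \cdot 2^{\binom{g+1}{2}} \cdot f'(\gamma)^{-\binom{g+1}{2}-\ell(\ell-g)} \cdot (u_b+b\gamma)^{\binom{g+1}{2}}$.

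Finally, Lemma~\ref{lem:locEuler} applied with $m = \binom{g+1}{2}$ and leading coefficient $a$ as above gives $\tfrac{m}{2}\mb{H}$ when $m$ is even, and $\tfrac{m-1}{2}\mb{H} + \langle a \rangle$ otherwise. In the odd case, the identity $\langle c \rangle = \langle c^{-1} \rangle$ in ${\rm GW}(k(\gamma))$ clears the denominator and rewrites $\langle a \rangle$ as $\langle \det M(\ell,g) \cdot 2^{\binom{g+1}{2}} \cdot f'(\gamma)^{\binom{g+1}{2}+\ell(\ell-g)} \rangle$; Lemma~\ref{lem:locEuler_bch} then traces this to $F$ and produces \eqref{eq:Euler_ramif_l>g}. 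The main technical point will be the justification that only the \'etale leading-order term survives the passage to the Nisnevich trivialization — exactly what the vanishing of $Dx/du_b$ at $(\gamma,0)$ delivers, since it simultaneously makes the transition factor and the coordinate-change Jacobian units there.
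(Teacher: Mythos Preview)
Your argument is correct and follows the same overall strategy as the paper: invoke Theorem~\ref{thm:infl_conc_ramif} for the leading term in the \'etale coordinate $y$, pass to the Nisnevich coordinate $u_b$ via the transition factor of \S\ref{subsec:compatible_Nis_gen}, and finish with Lemmas~\ref{lem:locEuler} and~\ref{lem:locEuler_bch}. The only difference is in the execution of the middle step: the paper re-expresses $D^1_y x$ and $D^2_y x$ in terms of $D^i_{u_b}x$ and $D^i_{u_b}y$ via Fa\`a di Bruno and then computes those $u_b$-derivatives explicitly, whereas you read off the Taylor expansion $x-\gamma = y^2/f'(\gamma)+O(y^4)$ directly from $y^2=f(x)$, substitute, and then observe that both the transition factor $(Dy/du_b)^{\binom{\ell+1}{2}}$ and the substitution $y=(u_b+b\gamma)+O((u_b+b\gamma)^2)$ are unit-valued at the point, so the leading coefficient is unchanged. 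Your route is a bit more economical; the paper's is more explicit about how each derivative transforms, which has the minor advantage of making visible exactly which higher-order corrections are being discarded.
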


\begin{proof}
To compute the local Euler index at a ramification point $(\gamma,0) \in R_{\pi}$ using Theorem~\ref{thm:infl_conc_ramif}, we first need to rewrite $w(\lambda)$ in terms of the local Nisnevich coordinate $u_b:=y-bx$. 
Note that equation \eqref{eq:derivative_of_x} describes $D^1_{u_b}x$, and Fa\`a di Bruno's chain rule \eqref{FdB} implies that $D^1_y x=(D^1_{u_b}x)(D^1_{u_b}y)^{-1}$ (where we interpret $u_b$ as a function of $y$). Since $y = u_b + bx$, we have $(D^1_{u_b}y)(-b\gamma) = 1$ as $(D^1_{u_b}x)(-b\gamma)=0$ by \eqref{eq:power_series_first_derivative}. Thus, $D^1_y x$ vanishes once at $-b\gamma$, as a function of $u_b$.

\medskip
Similarly, in order to rewrite $D^2_yx$ as a function of $u_b$, we first apply Fa\`a di Bruno, which yields
\begin{align*}
    D^2_yx=(D^1_{u_b}x)(D^2_yu_b)+(D^2_{u_b}x)(D^1_yu_b)^2.
\end{align*}
To understand the implications of this, we first rewrite $D^2_{u_b}x$. Applying Fa\`a di Bruno to compute the second Hasse derivative of $(u_b+bx)^2=f(x)$, we obtain
\begin{align*}
\left(1+bD^1_{u_b}x \right)^2+2(u_b+bx)b D^2_{u_b}x = (D^2f)(x)\left(D^1_{u_b} x \right)^2+(D^1f)(x)D^2_{u_b}x.
\end{align*}
After collecting terms involving $D^1_{u_b}x$, we obtain
\[
    D^2_{u_b}x= \frac{(b^2-(D^2f)(x))\left(D^1_{u_b}x\right)^2+ 2b D^1_{u_b}x+1}{(D^1f)(x)-2(u_b+bx)b} \label{eq:second_derivative_of_x}.
\]
Note that when $u_b=-b\gamma$, we have $x(-b\gamma)=\gamma$ and $D^1f(\gamma) \neq 0$. Moreover, the power series expansion of $D^1_{u_b}x$ of equation \eqref{eq:power_series_first_derivative} implies that the power series expansion of $D^2_{u_b}x$ is of the form
\begin{equation*}
D^2_{u_b}x=\frac{1}{(D^1f)(\gamma)}+q(u_b)(u_b+b\gamma)
\end{equation*}
where $q \in F\llbracket u_b\rrbracket$. According to Lemma~\ref{lem:locEuler}, to compute the local Euler index we need only the lowest order term of the power series expansion of $D^2_yx$ at $u_b=-b\gamma$; so we may ignore $D^2_yu_b$, as $D^2_{u_b}x$ and $D^1_yu_b$ do not vanish at $u_b=-b\gamma$ but $D^1_{u_b}x$ does.


\medskip
{\fl Putting} this all together, we see that as a function of $u_b$, the lowest order term of $w(\lambda)$ with respect to $y$ is
\[
    (D^1_{u_b}y)^{\binom{2\ell-g+1}{2}}\det M(\ell,g) \cdot (D^1_{u_b}x)^{\binom{g+1}{2}}(D^1_{u_b}y)^{-\binom{g+1}{2}}((D^1_{u_b}x)(D^2_yu_b)+(D^2_{u_b}x)(D^1_yu_b)^2)^{\ell(\ell-g)},
\]
where $(D^1_{u_b}y)^{\binom{2\ell-g+1}{2}}$ comes from the transition map described in \S \ref{subsec:nisnevich_coords_and_projections}.
Simplifying and ignoring terms with $D^2_yu_b$, our local Euler index reduces to that of
\[
    \det M(\ell,g) \cdot (D^1_{u_b}y)^{\binom{2\ell-g+1}{2}-\binom{g+1}{2}-2\ell(\ell-g)} (D^1_{u_b}x)^{\binom{g+1}{2}}(D^2_{u_b}x)^{\ell(\ell-g)}.
\]
Applying Lemma~\ref{lem:locEuler} and \ref{lem:locEuler_bch} in tandem with Theorem~\ref{thm:infl_conc_ramif}, equation~\eqref{eq:Euler_ramif_l>g} follows.
\end{proof}

\subsubsection{Local arithmetic Euler indices at $\infty_X$}\label{subsubsec:loc_ind_gen_ramif_infty}
Recall from Section~\ref{subsec:toric_geometry} that along $U_{\tau_2} \cong \mb{A}^2_{z,w}$, $X$ is defined by $w^2 = h(z)$, where $h(z)= z^{2g+2}f(z^{-1})$. According to Section~\ref{subsec:nisnevich_coords_and_projections}, the transition map for $L$ that results from exchanging any local coordinate away from $\infty$ for the local Nisnevich coordinate $w$ at $\infty$ is multiplication by $z^{\ell}$.

\medskip
Whenever $\ell \le g$, the basis $\lambda = (1,x,x^2,\dotsc,x^{\ell})$ for $|2\infty_X|$, when 
rewritten in terms of $z$ and $w$, becomes
$(z^{\ell},z^{\ell-1},z^{\ell-2},\dotsc,1)$. The latter is a permutation of $(1,z,z^2,\dotsc,z^{\ell})$ with sign equal to $(-1)^{\lfloor \frac{\ell+1}{2} \rfloor} = -1$, since we assume $\ell \equiv 1 \pmod{4}$. The induced action of the permutation on Wronskian matrices yields $w(\lambda) = (-1)^{\binom{\ell+1}{2}} \cdot w(1,z,z^2,\dotsc,z^{\ell})$. As a result, we easily obtain analogues of Theorems~\ref{thm:infl_conc_ramif_1} and \ref{thm:euler_index_ramif_l_leq_g} by replacing $f$ by $h$, $y$ by $t$ and $u_b = y - bx$ by $w$, 
$\gamma$ by $0$, and $x$ by $z$.

\begin{thm}\label{thm:infl_conc_ramif_infty}
    Assume that $\mathrm{char}(F) \neq 2$ and $\ell \le g$, in which case the complete linear series defined by $\mc{O}(2\ell \infty_X)$ has basis $\lambda = (z^{\ell},z^{\ell-1},\dotsc,z,1)$. In terms of the local Nisnevich coordinate $w$ at $\infty_X$, we have
    \[
        w(\lambda)=-\left(\frac{Dz}{dw}\right)^{\binom{\ell+1}{2}}.
    \]
\end{thm}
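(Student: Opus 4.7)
The plan is to reduce Theorem~\ref{thm:infl_conc_ramif_infty} directly to Theorem~\ref{thm:infl_conc_ramif_1} by a change of basis. The setup preceding the statement has already done most of the bookkeeping: after accounting for the transition map $z^\ell$ for $L$ between the $\mb{A}^2_{x,y}$ chart and the $\mb{A}^2_{z,w}$ chart, the monomial basis $(1,x,\dotsc,x^\ell)$ is transported to $(z^\ell, z^{\ell-1}, \dotsc, z, 1)$, which is a reversal of the natural basis $(1,z,z^2,\dotsc,z^\ell)$ of $H^0(\mb{P}^1,\mc{O}(\ell))$ pulled back via $\pi$. Reversing $\ell+1$ entries multiplies the determinant defining the Wronskian by the sign $(-1)^{\binom{\ell+1}{2}}$, which equals $-1$ under the standing hypothesis $\ell \equiv 1 \bmod 4$.

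Next I would apply the argument of Theorem~\ref{thm:infl_conc_ramif_1} verbatim to the basis $(1,z,z^2,\dotsc,z^\ell)$, working in the local chart $(X \cap U_{\tau_2}) \setminus V(dw) \to \mb{A}^1_w$. The proof of that theorem is formal: it relies only on the Hasse--Leibniz rule applied to powers of a single function (here $z$) together with the fact that $\frac{Dz}{dw}$ vanishes to order 1 at the ramification point. The latter vanishing follows exactly as in \S\ref{subsec:loc_Euler_ind_infty}: differentiating $w^2 = h(z)$ with respect to $w$ yields $\frac{Dz}{dw} = \frac{2w}{h^{\pr}(z)}$, which vanishes simply at $(z,w)=(0,0)$ as long as $h^{\pr}(0) = a_{2g+1} \neq 0$. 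Consequently, the column-reduction argument produces a lower triangular matrix with diagonal entries $1, \frac{Dz}{dw}, \bigl(\frac{Dz}{dw}\bigr)^2, \dotsc, \bigl(\frac{Dz}{dw}\bigr)^\ell$, so that
\[
w(1,z,\dotsc,z^\ell) = \left(\frac{Dz}{dw}\right)^{\binom{\ell+1}{2}}.
\]

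Combining the two steps gives
\[
w(\lambda) \;=\; (-1)^{\binom{\ell+1}{2}} \, w(1,z,\dotsc,z^\ell) \;=\; -\left(\frac{Dz}{dw}\right)^{\binom{\ell+1}{2}},
\]
as claimed. The only subtlety to confirm is that the scalar produced by reordering the basis of $V$ genuinely persists after passing through the local trivialization of $\mc{E} \cong \det J^{\ell+1}(L)$; this is automatic, because the evaluation morphism is a map of free modules and reordering a basis of its source multiplies the determinant by the sign of the permutation in any trivialization whatsoever. Compatibility of the chosen Nisnevich trivialization at $\infty$ with the orientation was arranged in \S\ref{subsec:compatible_Nis_gen} under the hypothesis $\ell \equiv 1 \bmod 4$, so no further work is required on that front. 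The hardest thing, if anything, is simply verifying that the formal column-reduction in the proof of Theorem~\ref{thm:infl_conc_ramif_1} transplants without modification to the $\infty$-chart; once one observes that it depends only on the Hasse--Leibniz expansion of powers of a single local coordinate, there is nothing left to check.
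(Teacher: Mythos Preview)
Your proposal is correct and follows essentially the same route as the paper: the paper observes that $(z^\ell,\dotsc,1)$ is the reversal of $(1,z,\dotsc,z^\ell)$ with sign $(-1)^{\lfloor(\ell+1)/2\rfloor}=(-1)^{\binom{\ell+1}{2}}=-1$ under the hypothesis $\ell\equiv 1\bmod 4$, and then appeals to the argument of Theorem~\ref{thm:infl_conc_ramif_1} with the substitutions $x\mapsto z$, $u_b\mapsto w$, $f\mapsto h$, $\gamma\mapsto 0$. Your additional remarks on why the column-reduction transplants and why the permutation sign survives the trivialization are sound but go slightly beyond what the paper spells out.
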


\begin{thm}\label{thm:euler_index_ramif_l_leq_g_infty}
    Let $X$ denote a hyperelliptic curve defined over a field $F$ of characteristic $p \neq 2$ as above. Whenever $\ell \le g$ and $\ell \equiv 1 \pmod{4}$, the local Euler index of the complete linear series $|2\ell \infty_X |$ in ${\rm GW}(F)$ associated to $\infty_X$ is given by
    \begin{equation}\label{eq:Euler_ramif_l<=g_infty}
        \mbox{\rm{ind}}_{\infty_X}w(\lambda) = \left( \frac{\binom{\ell+1}{2}-1}{2} \cdot \mb{H} + \left< -\frac{(D^1h)(0)}{2} \right> \right).
    \end{equation}
\end{thm}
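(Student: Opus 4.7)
The plan is to mirror the strategy used for Theorem~\ref{thm:euler_index_ramif_l_leq_g}, but now working in the local Nisnevich chart $(X\cap U_{\tau_2})\setminus V(dw) \to \mb{A}^1_w$ at $\infty_X$ obtained from the Hirzebruch embedding as in \S\ref{subsec:loc_Euler_ind_infty}. Theorem~\ref{thm:infl_conc_ramif_infty} already reduces the computation of $w(\lambda)$ at $\infty_X$ to understanding the power series expansion of $\frac{Dz}{dw}$ in the uniformizer $w$, since
\[
w(\lambda) = -\left(\frac{Dz}{dw}\right)^{\binom{\ell+1}{2}}.
\]

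First, I would differentiate the defining equation $w^2 = h(z)$ of $X$ in $\mb{A}^2_{z,w}$ with respect to $w$ to obtain
\[
D^1_w z = \frac{2w}{(D^1h)(z)}.
\]
Since $\infty_X$ corresponds to $z=w=0$ and $(D^1h)(0) = a_{2g+1} \neq 0$ (this being the leading coefficient of $f$, as noted in \S\ref{subsec:loc_Euler_ind_infty}), the power series expansion of $D^1_w z$ in $w$ begins as
\[
D^1_w z = \frac{2}{(D^1h)(0)}\, w \;+\; O(w^2),
\]
so $D^1_w z$ vanishes to order exactly one at $w=0$. Raising this to the $\binom{\ell+1}{2}$-th power and multiplying by $-1$, the power series expansion of $w(\lambda)$ in $w$ is of the form $w^{\binom{\ell+1}{2}}(a + w \cdot g(w))$ for some $g \in F\llbracket w\rrbracket$, where
\[
a = -\left(\frac{2}{(D^1h)(0)}\right)^{\binom{\ell+1}{2}}.
\]

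Now I would invoke Lemma~\ref{lem:locEuler} with $m = \binom{\ell+1}{2}$. Under the standing assumption $\ell \equiv 1 \bmod 4$ from \S\ref{subsec:compatible_Nis_gen}, $m = \binom{\ell+1}{2}$ is odd, placing us in the second branch of Lemma~\ref{lem:locEuler}:
\[
\mathrm{ind}_{\infty_X} w(\lambda) = \frac{\binom{\ell+1}{2}-1}{2}\cdot \mb{H} + \langle a \rangle.
\]
Since $\binom{\ell+1}{2}$ is odd, odd powers of a scalar represent the same class in $\mathrm{GW}(F)$ up to squares, so $\langle a \rangle = \langle -\frac{2}{(D^1h)(0)}\rangle$; and since $\langle c \rangle = \langle c^{-1}\rangle$ in $\mathrm{GW}(F)$ (one has $c^{-1} = c\cdot(c^{-1})^2$), this further simplifies to $\langle -\frac{(D^1h)(0)}{2}\rangle$. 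No field trace intervenes because $\infty_X$ is $F$-rational, in contrast to the situation of Theorem~\ref{thm:euler_index_ramif_l_leq_g}. The only conceptually delicate point is parity bookkeeping: one must remember that $\ell \equiv 1 \bmod 4$ forces $\binom{\ell+1}{2}$ to be odd, so the hyperbolic part comes with the correct integer coefficient and a single residual rank-one form $\langle -\frac{(D^1h)(0)}{2}\rangle$ survives.
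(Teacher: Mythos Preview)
Your proposal is correct and follows essentially the same approach as the paper: you apply Theorem~\ref{thm:infl_conc_ramif_infty}, differentiate the local equation $w^2=h(z)$ to extract the leading term of $D^1_w z$, and then invoke Lemma~\ref{lem:locEuler} exactly as in the proof of Theorem~\ref{thm:euler_index_ramif_l_leq_g}, with the extra sign coming from the permutation of the basis. The paper does not write out a separate proof for this theorem; it simply states that the argument for Theorem~\ref{thm:euler_index_ramif_l_leq_g} goes through verbatim after replacing $f$ by $h$, $u_b$ by $w$, $\gamma$ by $0$, and $x$ by $z$, which is precisely what you have unpacked.
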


\medskip
\noindent Now suppose that $\ell > g$. Rewritten in terms of $z$ and $w$, the basis 
\[\lambda = (1,y,\dotsc,x^{\ell-g-1},x^{\ell-g-1}y;x^{\ell-g},x^{\ell-g+1},\dotsc,x^{\ell})\] 
becomes
\[
(z^{\ell},z^{\ell-g-1}w,z^{\ell-1},z^{\ell-g-2}w,\dotsc,z^{g+1},w;z^g,z^{g-1},\dotsc,1).
\]
The latter is a permutation of $(1,w,z,zw,\dotsc,z^{\ell-g-1},z^{\ell-g-1}w;z^{\ell-g},z^{\ell-g+1},\dotsc,z^{\ell})$ with sign 
\[
(-1)^{\lfloor \frac{\ell+1}{2} \rfloor} \cdot (-1)^{\lfloor \frac{\ell-g}{2} \rfloor} = (-1)^{1+\binom{\ell-g}{2}}
\]
as $(-1)^{\lfloor \frac{n}{2} \rfloor} = (-1)^{\binom{n}{2}}$ for every $n \in \mb{N}$. Essentially the same argument as that used when $\ell \le g$ yields the following analogues of Theorems~\ref{thm:infl_conc_ramif} and \ref{thm:euler_index_ramif_l>g}.

\begin{thm}\label{thm:infl_conc_ramif_infty_2}
    Let $\la:=(z^{\ell},z^{\ell-g-1}w,z^{\ell-1},z^{\ell-g-2}w,\dotsc,z^{g+1},w;z^g,z^{g-1},\dotsc,1)$ denote the monomial basis of the complete linear series $|\mc{O}(2\ell \infty_X)|$ as above when $\mathrm{char}(F) \neq 2$, $\ell > g$, and $\det M(\ell,g)$ is nonzero in $F$. 
    With respect to the local \'etale coordinate $w$, the lowest-order term of the Hasse Wronskian $w(\lambda)$ at $\infty_X$ is equal to that of
    \[
    (-1)^{1+\binom{\ell-g}{2}} \cdot \det M(\ell,g) \cdot \left(D^1_w z\right)^{\binom{g+1}{2}} (D^2_w z)^{\ell(\ell-g)}
    \]
    where $D^i_wz= \frac{D^i z}{dw}$.
\end{thm}

\begin{thm}\label{thm:euler_index_ramif_l>g_infty}
Let $X$ denote a hyperelliptic curve defined over a field $F$ of characteristic $p \neq 2$ as above. Whenever $\ell>g$, $2\ell - g \equiv 1 \pmod{4}$, and $\det M(\ell,g)$ is nonzero in $F$, the local Euler index of the complete linear series $|2\ell \infty_X|$ in ${\rm GW}(F)$ associated at $\infty_X$ is given by
\begin{align}
    &\mathrm{ind}_{\infty_X}w(\lambda) \notag \\
    &=\begin{cases}
            \dfrac{1}{2}\binom{g+1}{2}\cdot \mb{H} & \text{if $\ell$ is even and $g \equiv 3 \pmod{4}$}\\
            \noalign{\vskip9pt}
            \dfrac{\binom{g+1}{2}-1}{2}\cdot \mb{H} + \left< (-1)^{1+\binom{\ell-g}{2}} 2(\det M(\ell,g)) (D^1h)(0) \right> & \text{if $\ell$ is odd and $g \equiv 1 \pmod{4}$}.
        \end{cases} \label{eq:Euler_ramif_l>g_infty}
\end{align}
\end{thm}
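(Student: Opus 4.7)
The plan is to follow the same strategy as in the proof of Theorem~\ref{thm:euler_index_ramif_l>g}, adapted to the local picture at $\infty_X$ described in \S\ref{subsec:loc_Euler_ind_infty}. First, I would invoke Theorem~\ref{thm:infl_conc_ramif_infty_2} to obtain the lowest-order term of the Hasse Wronskian $w(\lambda)$ in terms of the local coordinate $w$, namely
\[
(-1)^{1+\binom{\ell-g}{2}}\, \det M(\ell,g)\, (D^1_w z)^{\binom{g+1}{2}} (D^2_w z)^{\ell(\ell-g)}.
\]
A useful simplification relative to the affine case is that $\infty_X$ is an $F$-rational point with $k(\infty_X)=F$, and the projection $\mathbb{A}^2_{z,w}\to\mathbb{A}^1_w$ makes $w$ simultaneously an \'etale and a Nisnevich uniformizer at $\infty_X$; no transition map analogous to the passage $y\leadsto u_b$ in the proof of Theorem~\ref{thm:euler_index_ramif_l>g} is required, and the trace of Lemma~\ref{lem:locEuler_bch} is trivial.

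Next I would compute $D^1_w z$ and $D^2_w z$ from the defining equation $w^2=h(z)$ in \eqref{infinity_hyperelliptic_equation}, viewed locally at $z=w=0$. Applying the Hasse derivative once gives $D^1_w z=2w/(D^1h)(z)$, whose power series in $w$ has leading term $\tfrac{2}{(D^1h)(0)}w$, since $h(0)=0$ while $(D^1h)(0)=a_{2g+1}$ is the nonzero leading coefficient of $f$. Applying Fa\`a di Bruno \eqref{FdB} to the second Hasse derivative of $w^2=h(z)$ yields the relation
\[
1=(D^1h)(z)\,(D^2_w z)+(D^2h)(z)\,(D^1_w z)^2,
\]
whose evaluation at $w=0$ gives $(D^2_w z)(0)=1/(D^1h)(0)$, a nonzero constant. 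Substituting these expansions into the formula from Theorem~\ref{thm:infl_conc_ramif_infty_2} shows that the lowest-order term of $w(\lambda)$ is
\[
(-1)^{1+\binom{\ell-g}{2}}\,\det M(\ell,g)\,\frac{2^{\binom{g+1}{2}}}{(D^1h)(0)^{\binom{g+1}{2}+\ell(\ell-g)}}\, w^{\binom{g+1}{2}}
\]
plus terms of strictly higher order in $w$.

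Finally, I would apply Lemma~\ref{lem:locEuler} with $m=\binom{g+1}{2}$ and $a$ equal to the scalar coefficient displayed above. When $m$ is even the result is $\tfrac{1}{2}\binom{g+1}{2}\mathbb{H}$, independent of $a$; when $m$ is odd the result is $\tfrac{\binom{g+1}{2}-1}{2}\mathbb{H}+\langle a\rangle$. Using the identity $\langle a^{-1}\rangle=\langle a\rangle$ in $\mathrm{GW}(F)$ to clear the denominator $(D^1h)(0)^{\binom{g+1}{2}+\ell(\ell-g)}$ then gives the form of \eqref{eq:Euler_ramif_l>g_infty}. The main bookkeeping obstacle is simply tracking the sign $(-1)^{1+\binom{\ell-g}{2}}$ inherited from the basis-permutation computation in Theorem~\ref{thm:infl_conc_ramif_infty_2} and verifying that the exponent of $(D^1h)(0)$ consolidates as stated, but neither issue requires a new idea beyond those already used at affine ramification points.
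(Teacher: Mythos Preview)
Your proposal is correct and follows exactly the approach the paper indicates: the paper itself gives no detailed proof here, merely asserting that Theorems~\ref{thm:infl_conc_ramif_infty_2} and \ref{thm:euler_index_ramif_l>g_infty} follow by the same argument as their affine counterparts after replacing $f$ by $h$, $y$ and $u_b$ by $w$, $\gamma$ by $0$, and $x$ by $z$, together with the extra sign $(-1)^{1+\binom{\ell-g}{2}}$ coming from the basis permutation. Your observation that $w$ is already a Nisnevich uniformizer at the $F$-rational point $\infty_X$---so that the $y\leadsto u_b$ transition and the trace collapse---is precisely the simplification the paper's substitution $u_b\mapsto w$ encodes, and your derivative computations and application of Lemma~\ref{lem:locEuler} are correct.
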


\subsubsection{\bf Example: $\ell=1$} In this case, all local inflectionary indices away from $R_{\pi}$ are zero. The local index in a hyperelliptic ramification point $(\ga,0)$ other than $\infty_X$ is $\mbox{Tr}_{k(\ga)/F} \langle \frac{2}{f^{\pr}(\ga)} \rangle$, which yields $\mbox{Tr}_{k(\ga)/F} \langle 1 \rangle$ whenever $\frac{2}{f^{\pr}(\ga)}$ is a square in $k(\ga)$. The local index in $\infty_X$, on the other hand, is $\left< -\frac{2}{h^{\pr}(0)} \right>$, where $h(z):=z(z^{2g+1}f(z^{-1}))$. It is worth noting that in \cite[\S 11]{L1} and \cite{BKW}, $\mathrm{ind}_p(s)$ and $e(\mc{E})$ were computed by comparing the canonical bundle of $X$ against that of $\mathbb{P}^1$. 


\subsection{Local Euler indices away from $R_{\pi}$, and Hasse inflection polynomials}\label{subsec:hasse_inflection_polys}

In this subsection, we will generalize the description of local inflection indices given in \cite{CG} and \cite{CG1} to arbitrary characteristic. Accordingly, given positive integers $\ell>g$, we define the {\it $(g,\ell)$th Hasse inflection polynomial} $P_{g,\ell}(x) \in F[x]$ according to
\begin{equation}\label{hasse_inflection_poly}
\det (D^{j}x^iy)_{0 \leq i \leq \ell-g-1; \ell+1 \leq j \leq 2\ell-g}=(f^{-(\ell+1)}y)^{\ell-g}P_{g,\ell}(x)
\end{equation}
where $D^{j}=D^{j}_x$. The characteristic property of $P_{g,\ell}$ is that its roots parameterize precisely the $x$-coordinates of $\overline{F}$-rational Hasse inflection points of the complete linear series $|2\ell \infty_X|$ on $X$ {\it supported on the complement of $R_{\pi}$}. It is worth noting that when $\ell=g+1$, the equation \eqref{hasse_inflection_poly} reduces to the statement that
\begin{equation}\label{eq:infl_l=g+1}
D^{g+2}y= f^{-(g+2)}y \cdot P_{g,g+1}(x).
\end{equation}
In general, we can always realize Hasse inflection polynomials as determinants in the ``atomic" polynomials $P_{g,g+1}(x)$, according to the following Hasse analogue of \cite[Lem. 2.1]{CG1}.

\begin{prop}\label{inflection_poly_determinantal_presentation}
Given positive integers $\ell>g$, let $\mu:=\ell-g$ and define a homogeneous polynomial of degree $\mu$ in the variables $t_{-\mu},\dotsc,t_0,\dotsc,t_{\mu-1}$ as follows:
\[
Q_{\mu,\ell+1}=\det(t_{j-i})_{0 \leq i,j \leq \mu-1} \in \mb{Z}[t_{-\mu},\dots,t_0,\dots,t_{\mu-1}].
\]
We then have $P_{g,\ell}=Q_{\mu,\ell+1}|_{t_i=P_{\ell+1+i,\ell+2+i}}$.
\end{prop}

\begin{proof}
The desired result follows immediately from the (first part of the) Hasse analogue of \cite[Rmk 3.5]{BCG}; namely, that away from the ramification locus $R_{\pi}$, the Hasse Wronskian is locally given by
\[
w(\la)= \det(D^{\ell+1+j-i}y)_{1 \leq i,j \leq \ell-g}.
\]
The latter equality results from an easy, if slightly tedious, row reduction that we leave to the reader.
\end{proof}

The atomic inflection polynomials $P_{g,g+1}(x)$, in turn, may be calculated recursively. Namely, given a positive integer $g$, let $n=g+2$; for simplicity (and consistently with \cite{CG}) we write $P_n$ in place of $P_{g,g+1}$. The following result is a Hasse analogue of \cite[Sect. 2, eq. (3)]{CG}.
\begin{prop}\label{prop:recursion}
Suppose that $\mathrm{char}(F) \neq 2$. The atomic inflection polynomials of the hyperelliptic curve defined by the affine equation $y^2=f(x)$ may then be obtained by applying the recursive relation
\begin{equation}\label{eq:reduced_recursion}
P_{n+1}= \frac{1}{n+1}\bigg((D^1P_n)\cdot f+ \bigg(-n+\frac{1}{2}\bigg) P_n \cdot (D^1f)\bigg)
\end{equation}
for every $n \geq 1$, subject to the seed datum $P_1=\frac{1}{2}D^1f$.
\end{prop}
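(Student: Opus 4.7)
The plan is to prove the stronger identity
\[
D^n y = f^{-n}\, y \cdot P_n(x)
\]
for every $n \geq 1$, interpreted in the completed local ring at a non-Weierstrass point of $X$, with $P_n(x) \in F[x]$ a polynomial; the recursion \eqref{eq:reduced_recursion} will then emerge as the inductive step. Observe that when $n = g+2$, this agrees with \eqref{eq:infl_l=g+1}; for $n = 1, 2$, we are simply extending the notation.

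\medskip
For the base case $n = 1$, I would apply $D^1$ to $y^2 = f$ using the ordinary Leibniz rule (since $D^1$ coincides with the usual derivative): this gives $2y \cdot D^1 y = D^1 f$, hence $D^1 y = \tfrac{D^1 f}{2y} = f^{-1}\, y \cdot \tfrac{1}{2}D^1 f$, matching the seed datum $P_1 = \tfrac{1}{2} D^1 f$.

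\medskip
For the inductive step, the key tool is the Hasse derivative composition identity $D^1 \circ D^n = (n+1)\, D^{n+1}$, a special case of $D^i D^j = \binom{i+j}{i} D^{i+j}$ that is verified termwise on monomials. Applying $D^1$ to both sides of the inductive hypothesis $D^n y = f^{-n}\, y\, P_n$ transforms the left side into $(n+1)D^{n+1}y$, while the Leibniz rule expands the right side as
\[
-n f^{-n-1} (D^1 f)\, y\, P_n \;+\; f^{-n} (D^1 y)\, P_n \;+\; f^{-n}\, y\, (D^1 P_n).
\]
Substituting $D^1 y = \tfrac{D^1 f}{2y}$ converts the middle term's prefactor $f^{-n}/y$ into $f^{-n-1}\, y$, and factoring out $f^{-n-1}\, y$ yields
\[
(n+1)\, D^{n+1} y = f^{-n-1}\, y \cdot \left[\left(-n + \tfrac{1}{2}\right)(D^1 f)\, P_n + f \cdot (D^1 P_n)\right],
\]
which is precisely $(n+1)\, f^{-(n+1)}\, y \cdot P_{n+1}$ for $P_{n+1}$ defined by \eqref{eq:reduced_recursion}. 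Polynomiality of $P_{n+1}$ is transparent from the bracketed expression, provided $n+1$ is invertible in $F$.

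\medskip
The main potential obstacle is justifying the identity $D^n y = f^{-n}\, y\, P_n$ as an honest equation, since $y$ is algebraic rather than polynomial in $x$. This is resolved by working in $F\llbracket x - x_0 \rrbracket$ at any point with $f(x_0) \neq 0$, where $y = \sqrt{f(x)}$ admits a genuine power series expansion and the Hasse derivatives $D^n_x$ act unambiguously; the identities derived there propagate as rational function identities on $X \setminus R_\pi$. A secondary subtlety is the appearance of $\tfrac{1}{n+1}$, which requires $\mathrm{char}(F) \nmid n+1$ --- a hypothesis implicit in the statement of the recursion.
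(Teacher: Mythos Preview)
Your core computation matches the paper's characteristic-zero argument exactly: differentiate $y^2=f$ to obtain $D^1y=\tfrac{D^1f}{2y}$, apply $D^1$ to the inductive hypothesis $D^ny=f^{-n}yP_n$, and invoke $D^1D^n=(n+1)D^{n+1}$ together with the Leibniz rule to extract the recursion.

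The gap lies in your treatment of positive characteristic. You assert that $\mathrm{char}(F)\nmid(n+1)$ is ``a hypothesis implicit in the statement,'' but the proposition claims the recursion for \emph{every} $n\geq 1$ under the sole assumption $\mathrm{char}(F)\neq 2$. When $p\mid(n+1)$, the identity $D^1D^n=(n+1)D^{n+1}$ collapses to $0$ over $F$, so your inductive step yields no information about $P_{n+1}$, and the expression $\tfrac{1}{n+1}(\cdots)$ is not a priori meaningful in $F$. The paper closes this case by lifting the curve $y^2=f(x)$ to a discrete valuation ring $R$ of mixed characteristic, establishing the integral identity $(n+1)P_{n+1}=(D^1P_n)f+(-n+\tfrac{1}{2})P_n(D^1f)$ over $R$, and then noting that Hasse differentiation commutes with base change, so that $P_{n+1}$ over $F$ is the specialization of $P_{n+1}$ over $R$. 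Since $P_{n+1}\in R[x]$, the right-hand side has $p$-adic valuation at least that of $n+1$, and the division by $n+1$ becomes well-defined after reduction modulo the maximal ideal. Without this lifting step (or an equivalent device), the case $p\mid(n+1)$ remains unproved in your argument.
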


\begin{proof}
    Assume first that $\mathrm{char}(F)=0$. Note that differentiating  $y^2=f(x)$ yields $2yD^1y=D^1f$. 
    Similarly, differentiating both sides of the defining equation~\eqref{eq:infl_l=g+1} for atomic inflection polynomials yields
    \begingroup
    \allowdisplaybreaks
    \begin{align*}
        D^1D^{n}y&=(D^1P_n)f^{-n}y+P_n\cdot (-nf^{-(n+1)}(D^1f)\cdot y+f^{-n}D^1y)\\
        &=(D^1P_n)f^{-n}y+P_n\cdot \left(-nf^{-(n+1)}(D^1f)\cdot y+f^{-n}\cdot \frac{1}{2}f^{-1}(D^1f)\cdot y\right)\\
        &=f^{-(n+1)}y\left( (D^1P_n)\cdot f+P_n\cdot (D^1f)\cdot \left(-n+\frac{1}{2} \right) \right).
    \end{align*}
    \endgroup
    The desired recursion now follows from the fact that $D^1D^{n}=(n+1)D^{n+1}$; see \S\ref{subsec:Hasse_Witt}.


\medskip
Assume now that the base field $F$ has characteristic $p>2$. The recursion \eqref{eq:reduced_recursion} makes sense provided $p \not | (n+1)$, as does the argument used in proving it. More generally, as we now explain, $P_{n+1}$ may be obtained by lifting to characteristic zero and then specializing to the base field $F$. The upshot is that there is a well-defined way to ``divide'' by $n+1$.

\medskip
Specifically, we claim that $P_{n+1}$ over $F$ is the specialization of an inflection polynomial defined over a discrete valuation ring $R$ whose field of fractions has characteristic zero, associated to a ``spreading out" of the underlying $F$-curve $y^2=f(x)$ over $R$. Clearly such an $R$ exists; moreover, the argument we used to prove the recursion \eqref{eq:reduced_recursion} 
in the characteristic zero case extends to $R$ and yields
\begin{equation}\label{eq:genl_recursion}
    (n+1)P_{n+1}= (D^1P_n)\cdot f+ \bigg(-n+\frac{1}{2}\bigg) P_n \cdot (D^1f).
\end{equation}
Since Hasse differentiation commutes with base change (see \cite[Prop. 5.6]{Vojta}), equation~\eqref{eq:genl_recursion} descends to $F$. 
In particular, the $p$-adic multiplicity of the right side of equation~\eqref{eq:genl_recursion} is at least that of $n+1$; this means in turn that the right side of 
equation~\eqref{eq:reduced_recursion} is well-defined over $F$ whenever $p>2$, provided we divide both sides of \eqref{eq:genl_recursion} by suitable powers of $p$ (here $p$ is a power of the uniformizer of $R$ up to multiplication by a unit). 
Finally, it is straightforward to check that the recursion we obtain is independent of our choice of mixed-characteristic extension.
\end{proof}

By construction, the Hasse Wronskian of the complete linear series $|\mc{O}_X(2\ell\infty_X)|$ on the hyperelliptic curve $X$ of genus $g$ is $w(\lambda)=(-1)^{\binom{\ell-g+1}{2}} (f^{-(\ell+1)}y)^{\ell-g} P_{g,\ell}(x)$ with respect to the local \'etale coordinate $x$, where $\lambda = (1,y,\dotsc,x^{\ell-g-1},x^{\ell-g-1}y;x^{\ell-g},x^{\ell-g+1},\dotsc,x^{\ell})$, and $(-1)^{\binom{\ell-g+1}{2}}$ is the sign of the permutation that reorders $\lambda$ as $(1, x, x^2, \dotsc, x^{\ell}; y, xy, \dotsc, x^{\ell-g-1}y)$. In particular, every root $x=\ga$ of an inflection polynomial $P_{g,\ell}(x)$ lifts to two inflection points $(\ga,\pm \sqrt{f(\ga)})$ on $X$. As detailed in \S \ref{subsec:Nis}, computing the local Euler index of $w(\lambda)$ at $(\ga,\pm \sqrt{f(\ga)})$ involves identifying the leading term of the power series expansion of $w(\lambda)$ with respect to a local Nisnevich coordinate. In full generality, this is somewhat delicate; there are two cases, depending on whether the extension $k(\ga,\pm \sqrt{f(\ga)})/k(\ga)$ is trivial or not. When the extension is trivial (resp., nontrivial), 
the point $\ga \in \mb{A}^1_x \subset \mb{P}^1$ {\it splits} (resp., is {\it inert}) with respect to the hyperelliptic projection $\pi: X \rightarrow \mb{P}^1$ at $\ga \in \mb{A}^1_x \subset \mb{P}^1$. Note that $x$ determines a local Nisnevich coordinate at $(\ga, \pm \sqrt{f(\ga)})$ if and only if $\ga$ splits. This fact 
is crucial in determining the local Euler index at $(\ga, \pm \sqrt{f(\ga)})$; see Remark~\ref{rmk:infl_poly_cond} below for a discussion of the inert case.
\begin{thm}\label{thm:local_Euler_index_infl_poly}
    Assume that $\ell > g$, $2\ell - g \equiv 1 \pmod{4}$, and $\mathrm{char}(F) \neq 2$. Choose a square root $\sqrt{f(\ga)}$ of $f(\ga)$, where $\ga$ is an $x$-coordinate of an inflection point of a complete linear series $|\mc{O}_X(2\ell\infty_X)|$ on a hyperelliptic curve $X$ of genus $g$. If $\sqrt{f(\ga)} \in k(\ga)$ (i.e., $\ga$ splits), then 
    \[
    \mathrm{ind}_{(\gamma,\sqrt{f(\ga)})}w(\lambda)
    =\mathrm{Tr}_{k(\ga)/F}\left((-1)^{\binom{\ell-g+1}{2}}(f^{-(\ell+1)}(\ga)\sqrt{f(\ga)})^{\ell-g}\cdot \mathrm{ind}_{(\gamma,0)}^{k(\ga)}P_{g,\ell}(x)\right)
    \] 
    and $\mathrm{ind}_{(\gamma,-\sqrt{f(\ga)})}w(\lambda)=\mathrm{Tr}_{k(\ga)/F}\left((-1)^{\ell-g} \cdot \mathrm{ind}_{(\gamma, \sqrt{f(\gamma)})}^{k(\ga)}w(\lambda)\right)$, where $\mathrm{ind}^{k(\ga)}$ is the local Euler index over the field $k(\ga)$.
\end{thm}

\begin{proof}
    The condition $\sqrt{f(\ga)} \in k(\ga)$ implies that the residue field of $(\ga, \pm \sqrt{f(\ga)})$ is $k(\ga)$, hence $x$ is itself a local Nisnevich coordinate near those inflection points. The proposition now follows from our discussion (and proof of) Proposition \ref{inflection_poly_determinantal_presentation}.
\end{proof}

When $\ell=g+1$, the inflection polynomial is of the form $P_n(x)$ for some $n \geq 3$, and may be computed recursively as in Proposition~\ref{prop:recursion}; the general inflection polynomial $P_{g,\ell}(x)$ may then be computed as a determinant in the atomic inflection polynomials $P_n(x)$ as in Proposition~\ref{inflection_poly_determinantal_presentation}. The defining equation \eqref{hasse_inflection_poly} for inflection polynomials reduces the calculation of the local Euler index in one of the two $\overline{F}$-preimages $(\ga,\pm \sqrt{f(\ga)})$ of $x=\ga$ to a calculation purely in terms of $x$. So whenever we understand the power series expansion of all $P_n(x)$ at $x=\ga$, we may deduce the local Euler index of $w(\lambda)$ at inflection points $(\ga, \pm \sqrt{f(\ga)})$.

\begin{rem}\label{rmk:infl_poly_cond}
Note that the condition $\sqrt{f(\ga)} \in k(\ga)$ (i.e., $\ga$ splits) is necessary in Theorem~\ref{thm:local_Euler_index_infl_poly} in order to obtain a local Euler index formula that depends only on $\gamma, f$, and $\ell$. To see the difficulty in computing the local Euler index when $\sqrt{f(\ga)} \not \in k(\ga)$ (i.e., $\ga$ is inert), first assume that $k(\gamma, \sqrt{f(\ga)})/F$ is a separable extension as in Remark~\ref{rmk:perfect_field}. In this case, $x$ is no longer a local Nisnevich coordinate; 
rather, we need to replace it by a general $F$-linear combination $ax+by$ corresponding to a general linear projection. The transition map that sends the local \'etale coordinate $x$ to the local Nisnevich coordinate $ax+by$ is $(D^1_{ax+by}x)^{\binom{2\ell-g+1}{2}}$, which is the reciprocal of
\[
    (D^1_x(ax+by))^{\binom{2\ell-g+1}{2}}=(a+bD^1_yx)^{\binom{2\ell-g+1}{2}}.
\]
Note that near $(\gamma, \sqrt{f(\ga)})$, we have $D^1_yx=(D^1_xy)^{-1}=(D^1_x\sqrt{f(x)})^{-1}=\frac{2\sqrt{f(x)}}{f^{\pr}(x)}$. Nonetheless, the evaluation of $(D^1_{ax+by}x)^{\binom{2\ell-g+1}{2}}$ at $(\ga,\sqrt{f(\ga)})$ depends on $a$ and $b$, which makes it difficult to extract a local Euler index formula independent of our choice of $(a,b)$ (even if the local Euler index does not itself depend on the choice of $(a,b)$). This suggests that it only makes sense to work with such a formula provided we assume $F$ to be a 
specifically-chosen field, as in \S \ref{sec:geometric_interpretations}.
\end{rem}

\section{Geometric interpretations of Euler indices}\label{sec:geometric_interpretations}
The objective of this section is twofold. We first give concrete interpretations for several interesting base fields (not of characteristic 2) of the global Euler index computed by Theorem~\ref{thm:global_Euler_class_inflection}, as well as of the local Euler indices for points that belong to the ramification locus $R_{\pi}$ of the hyperelliptic projection calculated in Theorems~\ref{thm:euler_index_ramif_l_leq_g} and \ref{thm:euler_index_ramif_l>g}. We then give analogous concrete interpretations and speculations for points in the complement of $R_{\pi}$ when the underlying curve is of genus one.

\subsection{Euler indices over $\mb{R}$} The study of inflection points of real {\it plane} curves dates back to Klein \cite{Kle}, who showed 
that a degree-$d$ curve in $\mb{P}^2$ defined over $\mb{R}$ has at most $d(d-2)$ real inflection points.\footnote{In particular, this means that at most one third of the inflection points are defined over $\mb{R}$; this bound, moreover, is known to be sharp.} For non-planar curves and linear series of rank greater than 2, few quantitative estimates for $\mb{R}$-rational inflection points exist, though \cite{KS} and \cite{BCG} address the rational and hyperelliptic cases, respectively. In the latter two papers, the authors sought to maximize $\mb{R}$-inflection by controlling the (global) topology of the real locus; here, our aim is rather to point out consequences of our arithmetic inflection formula that are uniform across {\it all} topological types. 

\medskip
As explained in the introduction, we have $\rm{GW}(\mb{R}) \cong \mb{Z}^2$, in which the two $\mb{Z}$ factors correspond, respectively, to the rank and signature of quadratic forms. Computing the rank of the classes in Theorem~\ref{thm:global_Euler_class_inflection} (resp., in Theorems~\ref{thm:euler_index_ramif_l_leq_g} and \ref{thm:euler_index_ramif_l>g}), we clearly recover the total $\mb{C}$-inflection of $|2\ell \infty_X|$ (resp., the $\mb{C}$-inflection multiplicity of $|2\ell \infty_X|$ over $R_{\pi}$). On the other hand, the fact that ${\rm sign}(\mb{H})=0$ implies that the signature of the global Euler class is zero. In light of Remark~\ref{rem:loc_Euler_cases}, this means precisely that the sum of oriented Milnor indices over all inflection points of $|2\ell \infty_X|$ is zero. 


\medskip
The signatures of the local Euler indices at ramification points away from $\infty_X$ in Theorems~\ref{thm:euler_index_ramif_l_leq_g} and \ref{thm:euler_index_ramif_l>g} depend on the field of definition of the $x$-coordinates $\ga$ of the underlying ramification points $(\ga,0)$. If $\ga \in \mb{C} \setminus \mb{R}$, then exactly as in \cite[Rmk 6.2]{McK}, the signature of the local Euler index is zero. On the other hand, if $\ga \in \mb{R}$ and $\ell \leq g$, the signature of the local Euler index is equal to the sign of $f^{\pr}(\ga)$. An analogous and only slightly more elaborate statement holds when $\ga \in \mb{R}$ and $\ell > g$, in which case the local Euler index is zero when $\binom{g+1}{2}$ is even, and equal to the sign of $(f^{\pr}(\ga))^{\ell(\ell-g)+1}$ when $\binom{g+1}{2}$ is odd. 

\medskip
On the other hand, according to Theorems~\ref{thm:euler_index_ramif_l_leq_g_infty} and \ref{thm:euler_index_ramif_l>g_infty}, the signature of the local Euler index at $\infty_X$ depends on the sign of $h^{\pr}(0)$, where $h(z)=z^{2g+2}f(z^{-1})$; moreover, $h^{\pr}(0)$ is exactly the leading coefficient of $f$.  If $\ell \leq g$, then the signature of the local Euler index is equal to the sign of $-h^{\pr}(0)$. Analogous arguments show that when $\ell > g$, the local Euler index is zero when $\binom{g+1}{2}$ is even, and otherwise it is equal to the sign of $(-1)^{1+\binom{\ell-g}{2}}(h^{\pr}(0))^{1+\ell(\ell-g)}$.

\medskip
This means, in particular, that whenever $f$ is {\it monic} and $\ell \le g$, the difference between the number of ramification points $\gamma \in \mb{R}$ for which $f^{\pr}(\ga)$ is negative and the number of ramification points for which $f^{\pr}(\gamma)$ is positive is precisely one. 


\subsection{Euler indices over $\mb{F}_q$}\label{subsec:Euler_finite} 
We have ${\rm GW}(\mb{F}_q) \cong \mb{Z} \times \mb{Z}/2\mb{Z}$, 
in which the factors correspond, respectively, to the rank and the discriminant. More precisely, elements of ${\rm GW}(\mb{F}_q)$ of given rank are classified according to whether or not their discriminants are squares; 
note that for explicit calculations we think of the second factor as $\mb{F}_q^{\ast}/(\mb{F}_q^{\ast})^2$ with its multiplicative group law.
The discriminant of the global Euler class computed by Theorem~\ref{thm:global_Euler_class_inflection} is $(-1)^{\frac{\ga_{\mb{C}}}{2}}=(-1)^{\frac{g(2\ell-g+1)^2}{2}}$, which is a square exactly when $g(2\ell-g+1)^2$ or $(q-1)$ is divisible by 4, i.e., if and only if $g$ is odd or $g$ is divisible by 4 or $(q-1)$ is divisible by 4. To compute the discriminants of the 
local Euler indices at ramification points $\gamma \in \mb{A}^1_{\mb{F}_q}$ in Theorems~\ref{thm:euler_index_ramif_l_leq_g} and \ref{thm:euler_index_ramif_l>g}, we proceed as in \cite[\S 6.3]{McK}. Namely, we apply the facts that ${\rm Tr}_{k(\ga)/\mb{F}_q} \langle \al \rangle= \langle {\rm norm}(\al) \rangle \cdot {\rm Tr}_{k(\ga)/\mb{F}_q} \langle 1 \rangle$ for every $\al \in k(\ga)$; that ${\rm norm}(\al)$ is a square in $\mb{F}_q$ if and only if $\al$ is a square in $k(\ga)$; and that ${\rm Tr}_{k(\ga)/\mb{F}_q} \langle 1 \rangle$ is a square if and only if $k(\ga)=\mb{F}_{q^n}$ with $n$ odd. More generally, given $\beta = \sum \langle \alpha_i \rangle \in \rm{GW}(k(\gamma))$, the corresponding trace ${\rm Tr}_{k(\ga)/\mb{F}_q} \beta = \sum \langle {\rm norm}(\alpha_i) \rangle \cdot {\rm Tr}_{k(\ga)/\mb{F}_q} \langle 1 \rangle$ has discriminant ${\rm disc} ({\rm norm}(\prod \alpha_i)) \cdot {\rm disc}({\rm Tr}_{k(\ga)/\mb{F}_q}\langle 1 \rangle )^{\mathrm{rank}(\beta)}$.

\medskip
When $\ell \leq g$, the local index $\beta = \sum \langle \alpha_i \rangle$ computed by Theorem~\ref{thm:euler_index_ramif_l_leq_g} satisfies $\prod \alpha_i = (-1)^{\frac{\binom{\ell+1}{2}-1}{2}}\frac{f^{\pr}(\gamma)}{2}$. The arguments of the preceding paragraph now show that the local index depends only on whether $\frac{f^{\pr}(\gamma)}{2}$ is a square in $k(\gamma)$, together with certain parity conditions on $\ell$ and the degree of $k(\gamma)/\mb{F}_q$. When $\ell > g$, we further assume that $\det M(\ell,g)$ is nonzero in $\mb{F}_q$ so that Theorem~\ref{thm:euler_index_ramif_l>g} applies; the local index at $\gamma$ then depends on whether $2$ and $f^{\pr}(\gamma)$ is a square in $k(\gamma)$, along with parity conditions on $\ell$ and $g$ and the degree of $k(\gamma)/\mb{F}_q$.

\medskip
Similarly, when $\ell \le g$, the local index at $\infty_X$ depends only on whether $\frac{h^{\pr}(0)}{2}$ is a square in $\mb{F}_q$, together with certain parity conditions on $\ell$. When $\ell > g$ and $\det M(\ell,g)$ is nonzero in $\mb{F}_q$, the local index at $\infty_X$ depends on whether or not $2$ and $h^{\pr}(0)$ are squares in $\mb{F}_q$, along with parity conditions on $\ell$ and $g$.

\medskip
Summing local Euler indices and assuming that $p\equiv 1 \pmod{8}$ (so that $-1$ and $2$ are squares in $\mb{F}_q$), we see, for example, that when $\ell\leq g$ and $f$ is monic, the difference between the number of ramification points $\gamma \in \overline{\mb{F}}_q$ for which the discriminant is not a square (either $f^{\pr}(\gamma)$ is a not a square in $k(\gamma)$ and the degree of $k(\gamma)/\mb{F}_q$ is odd, or $f^{\pr}(\gamma)$ is a square and the degree of $k(\gamma)/\mb{F}_q$ is even) and the number of ramification points $\gamma \in \overline{\mb{F}}_q$ for which the discriminant is a square is 1 modulo 2.


\subsection{Euler indices over $\mb{C}(\!(t)\!)$} We have ${\rm GW}(\mb{C}(\!(t)\!)) \cong \mb{Z} \times \mb{Z}/2\mb{Z}$, in which the factors correspond, respectively, to the rank and the discriminant; see, e.g., \cite[Prop. 6.9]{McK}. Let $\nu=\nu_t$ denote the standard $t$-adic valuation on $\mb{C}(\!(t)\!)$ given by the least non-vanishing degree in $t$. If $g = ut^{\nu(g)}$ for some unit $u$, then $\langle g\rangle$ is $\langle 1\rangle$ (resp., $\langle t\rangle$) when $\nu(g)$ is even (resp., odd). 
Since $\mathbb{H} = 2\langle 1\rangle = 2\langle t\rangle$, it follows that the discriminant of the global Euler class computed by Theorem~\ref{thm:global_Euler_class_inflection} is $0$. 

\medskip
An explicit description of the discriminants of the local Euler indices in Theorems~\ref{thm:euler_index_ramif_l_leq_g} and \ref{thm:euler_index_ramif_l>g} is as follows. When $\ell\leq g$ and $(\gamma,0)$ is a ramification point defined over $\mb{C}(\!(t^{1/m})\!)$, the corresponding local index depends on the parity of the $t^{1/m}$-adic valuation of $f^{\pr}(\gamma)$. More explicitly, $\mathrm{disc}(\mathrm{ind}_{(\gamma,0)})$ is given by ${\rm Tr}_{\mb{C}(\!(t^{1/m})\!) / \mb{C}(\!(t)\!)}\langle 1\rangle$ (resp., ${\rm Tr}_{\mb{C}(\!(t^{1/m})\!) / \mb{C}(\!(t)\!)}\langle t^{1/m}\rangle$) when the valuation is even (resp., odd). Similarly, when $\ell > g$, $\binom{g+1}{2}$ is odd, and $(\gamma,0)$ is a ramification point defined over $\mb{C}(\!(t^{1/m})\!)$, we see that $\mathrm{disc}(\mathrm{ind}_{(\gamma,0)})$ is given by ${\rm Tr}_{\mb{C}(\!(t^{1/m})\!) / \mb{C}(\!(t)\!)}\langle 1\rangle$ (resp., ${\rm Tr}_{\mb{C}(\!(t^{1/m})\!) / \mb{C}(\!(t)\!)}\langle t^{1/m}\rangle$) if the valuation of $f^{\pr}(\gamma)^{1+\ell(\ell-g)}$ is even (resp., odd); and that $\mathrm{disc}(\mathrm{ind}_{(\gamma,0)})$ always vanishes when $\binom{g+1}{2}$ is even. The above analysis goes through verbatim if we replace $(\gamma,0)$ by $\infty$, $m$ by 1, and $f^{\pr}(\gamma)$ by $h^{\pr}(0)$.

\medskip
The value of these traces, in turn, only depends on the parity of $m$. Indeed, proceeding as in the $\mb{F}_q$ case, we see that ${\rm Tr}_{\mb{C}(\!(t^{1/m})\!) / \mb{C}(\!(t)\!)}\langle 1\rangle \equiv m-1 \pmod{2}$ and ${\rm Tr}_{\mb{C}(\!(t^{1/m})\!) / \mb{C}(\!(t)\!)}\langle t^{1/m}\rangle \equiv m \pmod{2}$.

\medskip
Summing local Euler indices, we deduce that when $\ell\leq g$ and $f$ is monic, the difference between the number of ramification points $(\gamma,0)$ with odd discriminant (either the degree $m$ of the field of definition ${\mb{C}}(\!(t^{1/m})\!)$ is odd and $f^{\pr}(\gamma)$ has odd valuation, or $m$ is even and $f^{\pr}(\gamma)$ has even valuation) and the number of ramification points $\gamma \in \overline{\mb{C}(\!(t)\!)}$ with even discriminant is 1 modulo 2.


\subsection{Inflection polynomials for elliptic curves}\label{inflectionary_curves}
Over the complex numbers, the {\it Legendre} family of elliptic curves parameterized by
\begin{equation}\label{legendre_family}
y^2=x(x-1)(x-\kappa)
\end{equation}
leads to a nice presentation of the moduli stack of (marked) elliptic curves. In \cite{CG,CG1} the authors studied the variation of inflection polynomials in the Legendre parameter $\tau$.\footnote{In this situation, the linear series on the underlying curves are no longer generally complete; rather, they are distinguished codimension-$(g-1)$ subseries of complete series of degree $2\ell$ generated by the monomial bases $\la$ in the toric coordinates $x$ and $y$ introduced previously.} Their roots, as $\kappa$ varies, trace out {\it inflectionary} curves in the $(x,\kappa)$-plane. According to the analogy between torsion and inflection introduced in \S \ref{subsec:generalized_torsion}, we may think of these inflectionary curves as generalizations of modular curves. Whenever $\kappa$ is a {\it real} parameter, so that the corresponding elliptic curve $E_{\kappa}$ has a real locus $E_{\kappa}(\mb{R})$ with two connected components, \cite[Conj. 3.1]{CG1} predicts that the corresponding inflection polynomial is separable, i.e., has only simple roots.

\begin{figure}
\includegraphics[scale=0.5]{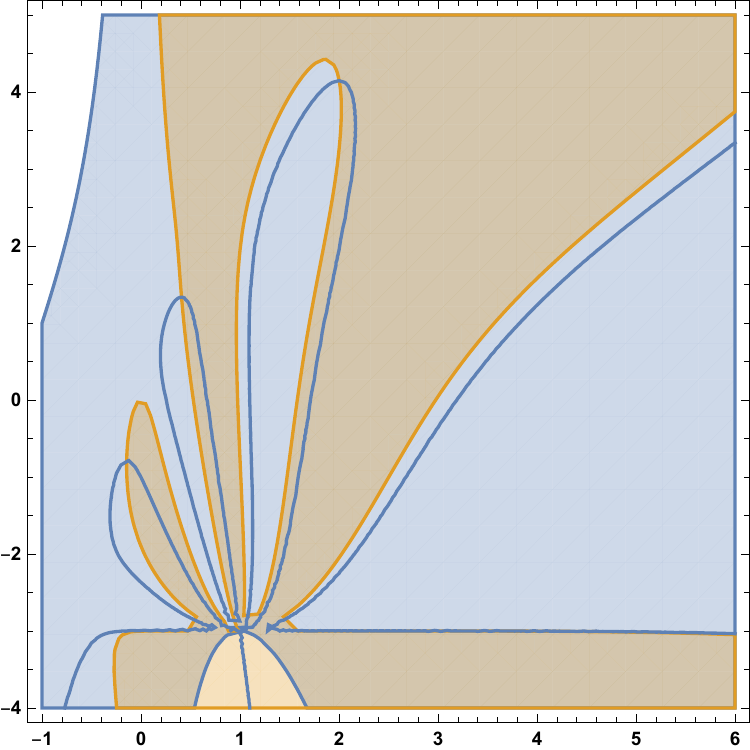}
\includegraphics[scale=0.5]{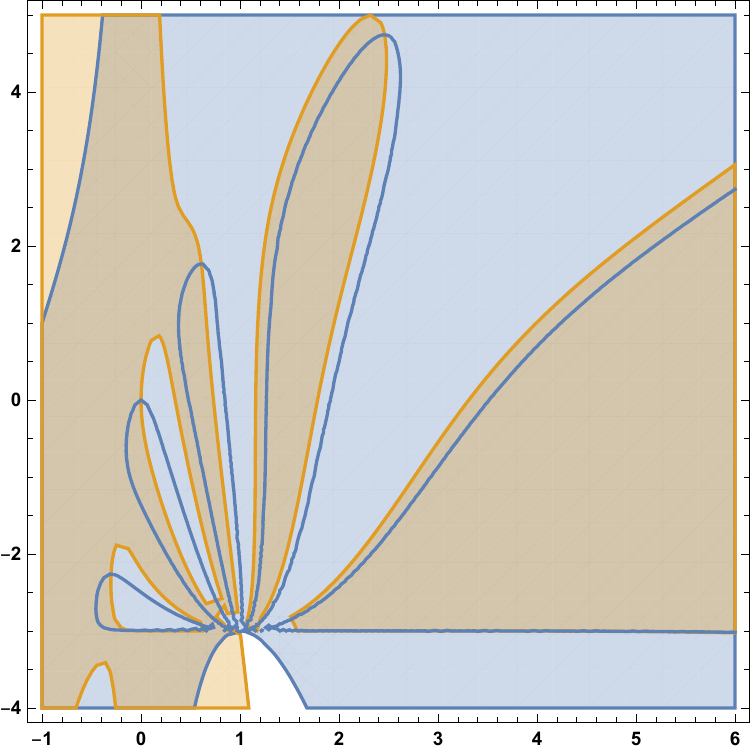}
\caption{Dark blue curves trace out the real loci of $(P_n=0)$ for $n=9,10$ in the $(x,a)$-plane. Here $a$ parameterizes the punctured $j$-line, and the fiber over $a$ is the elliptic curve $E_{(a,2)}:z^2=x^3+ax+2$ in the $(x,z)$-plane. Grey (resp., orange) shading indicates that the Weierstrass cubic $f(x)=x^3+ax+2$ (resp., $\frac{dP_n}{dx}$) is strictly positive.}
\label{Weierstrass_inflectionary_curves}
\end{figure}

\medskip
In \cite{H}, meanwhile, J. Huisman studies the moduli of elliptic curves over base fields $F$ of characteristic $\text{char}(F) \neq 2,3$ via their {\it Weierstrass} presentations $E_{(a,b)}: y^2= x^3+ax+b$. He shows that isomorphism classes of $\mb{R}$-elliptic curves are parameterized by a real projective $j$-line that is an $\mb{R}^{\ast}$-quotient of the punctured plane $\mb{R}^2 \setminus \{(0,0)\}$ with coordinates $(a,b)$. The $j$-line, in turn, is stratified according to the sign of the discriminant $\Delta(a,b)=-16(4a^3+27b^2)$; elliptic curves $E_{(a,b)}$ of strictly positive (resp., negative) discriminant are those whose real loci split as two connected components (resp., comprise a single connected component). Note that when $\Delta=0$, the corresponding fiber $E_{(a,b)}$ is a nodal rational curve. We now turn to the behavior of inflection polynomials along the open sublocus of the $j$-line where $b$ is nonzero. As a convenient normalization, we set $b=2$; then $a$ becomes a local parameter for our punctured $j$-line, and the ray $(a<-3)$ (resp., $(a>-3)$ parameterizes those isomorphism classes of elliptic curves with strictly positive (resp., negative) discriminants. The behavior of the negative-discriminant regime $(a>-3)$ displays some distinctive features relative to that of the positive-discriminant case explored in \cite{CG1}. 

\medskip
First of all, the number $\omega_{\mb{R}}$ of real inflection points is no longer uniform in the modular parameter $a$; rather, there are several critical intervals in $a$ along which $\omega_{\mb{R}}$ is constant, corresponding to the petals in Figure~\ref{Weierstrass_inflectionary_curves}. Computer experiments indicate that the number of petals, as a function of $n$, is precisely $\frac{n}{2}-1$ (resp., $\frac{n-1}{2}-1$) when $n$ is even (resp., odd). The {\it peak} of each petal, on the other hand, belongs to the discrete set of points in the $(x,a)$-plane for which $P_n= \frac{dP_n}{dx}=0$, where $P_n$ is the inflectionary polynomial associated to the Weierstrass cubic $y^2=x^3+ax+2$ (see \S~\ref{subsec:hasse_inflection_polys}); indeed, a peak is tautologically a point where $\frac{da}{dx}=0$, and $\frac{dP_n}{dx}=0$ follows by the chain rule. 
Similarly, those $a$-values at which (the corresponding specialization of) $P_n$ fails to be separable are roots of the $x$-discriminant of $P_n$. In particular, the non-separable set of $a$-values includes the $a$-coordinates of the petal peaks mentioned above. Our empirical (computer-based) evidence suggests the following is true.

\begin{conj}\label{atomic_inflectionary_curves}
Let $a \in \mb{R}$, and let $P_n(x)$, $n \geq 2$ denote the $n$th atomic inflection polynomial associated to the real Weierstrass elliptic curve $E_{(a,2)}: y^2=x^3+ax+2$ as above. The possible numbers of real zeroes of $P_n(x)$, as a set-valued function of the modular parameter $a$, are as follows.
\begin{center}
    \begin{tabular}{|p{3cm}|p{3cm}|p{3cm}|}
    \hline
        Value of $a$ & $n$ odd & $n$ even \\
     \hline
     $a<-3$ & 4, of which 2 satisfy $f>0$ & 2, of which 1 satisfies $f>0$
     \\
     \hline
     $a>-3$ & $2i, i=1, \dots, \frac{n-1}{2}$, of which $(2i-1)$ satisfy $f>0$ & $2i, i=1, \dots, \frac{n}{2}$, of which $(2i-1)$ satisfy $f>0$ \\
     \hline
    \end{tabular}
\end{center}
The number of real roots (in $x$) of $P_n$ is nonincreasing as a function of $a$, as $a$ increases from $-3$ to $\infty$. Moreover, when $n$ is even (resp., odd) there are precisely $\frac{n}{2}-1$ (resp., $\frac{n-1}{2}-1$) values of $a$ at which $P_n$ fails to be separable, and these are the $a$-coordinates of the petal peaks described above. At each of these $a$-values, $P_n$ has a double root and all other roots are simple.
\end{conj}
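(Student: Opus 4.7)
The plan is to prove Conjecture~\ref{atomic_inflectionary_curves} by analyzing the real plane curve $\mathcal{C}_n := V(P_n) \subset \mathbb{R}^2_{x,a}$ using a combination of the recursive structure from Proposition~\ref{prop:recursion} together with a Bezout-type count of its critical points. The first step is to use induction on $n$ together with the recursion
\[
(n+1)P_{n+1} = P_n'\cdot(x^3+ax+2) + \Big(\tfrac{1}{2}-n\Big) P_n \cdot(3x^2+a),
\]
in which $P_n' = \partial P_n/\partial x$, to pin down the bidegree of $P_n(x,a) \in \mathbb{Z}[x,a]$, verifying in particular that $\deg_x P_n = 2n$ with leading $x$-coefficient independent of $a$, so that the projection $\pi_a: \mathcal{C}_n \to \mathbb{R}_a$ has constant fiber degree and its topology governs the sought-for real-root counts.

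The central step is then to analyze $\pi_a$. The number of real preimages of $\pi_a$ over $a_0 \in \mathbb{R}$ changes as $a_0$ varies precisely when $a_0$ is a critical value, i.e., when some fiber point $(x_0, a_0)$ satisfies $P_n = \partial_x P_n = 0$. The conjectured count of petal peaks, $\tfrac{n}{2}-1$ (resp.\ $\tfrac{n-1}{2}-1$) when $n$ is even (resp.\ odd), should follow from a careful Bezout calculation of $V(P_n) \cap V(\partial_x P_n)$ in $\mathbb{P}^2$, after subtracting contributions at infinity and at complex critical points; equivalently, one would identify and count the real roots of the $x$-discriminant $\mathrm{disc}_x(P_n)(a) \in \mathbb{R}[a]$. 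The recursion for $P_n$ propagates by chain rule to a recursion for $\partial_x P_n$, and ultimately for $\mathrm{disc}_x(P_n)$, which I would analyze inductively to extract the precise count.

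Once the critical locus is controlled, the monotonicity statement reduces to showing that at each interior petal peak exactly two real roots coalesce, rather than a higher-multiplicity coincidence. This should follow from a local Morse-type analysis: the second $x$-derivative $\partial_x^2 P_n$ is generically nonvanishing at a critical point, and I expect one can rule out higher-order collisions by combining the recursion with an explicit computation of $\partial_x^2 P_n$ modulo the ideal $(P_n, \partial_x P_n)$. The parity-of-$f$ condition (odd number of real roots satisfying $f(x)>0$) should follow from tracking intersections of $\mathcal{C}_n$ with the three real branches of $V(f) = V(x^3+ax+2)$ in the $(x,a)$-plane, using that on the hyperelliptic curve $y^2=f(x)$ the locus $(f=0)$ is exactly the ramification locus $R_\pi$, whose inflection contribution is controlled by Theorem~\ref{thm:euler_index_ramif_l>g}.

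The regime $a<-3$, where $E_{(a,2)}$ has two real ovals, demands a separate argument that I would carry out by comparison with the complex analytic picture: inflection on the real locus of an elliptic curve is analogous to torsion on a one-dimensional compact abelian group (\autoref{subsec:generalized_torsion}), and the rigid counts $4$ (for $n$ odd) and $2$ (for $n$ even) should be a parity statement inherited from the two-component structure of $E_{(a,2)}(\mathbb{R})$. The main obstacle, as I see it, is ruling out unforeseen coincidences in the discriminant locus: (a) higher-order root collisions at $a$-values beyond the conjectured petal peaks, and (b) critical points at which the sign of $f$ at the colliding roots violates the conjectured parity. Both amount to genericity assertions for a very specific arithmetic family, and establishing them likely requires either explicit manipulation of the recursion or a deformation argument relating $E_{(a,2)}$ to a more tractable limit (e.g.\ $a \to \infty$, where the leading terms of $P_n$ in $a$ can be read off directly from the recursion).
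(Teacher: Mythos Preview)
The statement you are attempting to prove is a \emph{conjecture} in the paper, not a theorem: the authors present it as a prediction supported by computer experiments (``Our empirical (computer-based) evidence suggests the following is true'') and offer no proof. So there is no paper proof to compare your proposal against.

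Your proposal is a reasonable outline of what a proof \emph{might} look like, but it is not a proof, and you yourself flag the essential gaps. The heart of the matter is controlling the \emph{real} critical locus of $\pi_a:\mc{C}_n\to\mb{R}_a$, and a Bezout count only bounds the total number of complex intersection points of $V(P_n)$ and $V(\partial_x P_n)$; it says nothing about how many of those are real, nor about which half-line $a<-3$ or $a>-3$ they lie over. Your proposed inductive analysis of $\mathrm{disc}_x(P_n)(a)$ via the recursion for $P_n$ is the natural thing to try, but discriminants do not propagate well under linear recursions of this shape, and you give no mechanism for extracting the real-root count of the discriminant from the recursion. The Morse step (ruling out higher-order collisions by showing $\partial_x^2 P_n\neq 0$ on the critical locus) and the $a<-3$ count via the torsion analogy are likewise stated as expectations rather than arguments; the torsion analogy in particular is only heuristic here, since for $\ell>g$ the series $|2\ell\infty_X|$ on a genus-one curve is not literally computing torsion.

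In short: the paper does not claim to prove this, your plan identifies the right objects to study, but the steps you label as obstacles are the entire content of the conjecture, and nothing in the proposal closes them.
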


Note, in particular, that the conjecture predicts that for every fixed value of $a$, $P_n$ has an {\it even} number of real roots; and that whenever $n$ is {\it even}, there are values of $a$ for which $P_n$ has {\it only} real roots. Another counter-intuitive upshot of Conjecture~\ref{atomic_inflectionary_curves} is that for fixed values of $n$, those real elliptic curves whose linear series are maximally $\mb{R}$-inflected are not necessarily those with the maximal number (two) of real components. On the other hand, the precise distribution of {\it signatures} of the roots of $P_n$ appears to be somewhat intricate, as evidenced in Figure~\ref{Weierstrass_inflectionary_curves} by the pattern in which the petals intersect the domains of positivity for $\frac{dP_n}{dx}$.

\medskip
It is natural to ask for analogues of Conjecture~\ref{atomic_inflectionary_curves} over $\mb{Q}$ or $\mb{F}_q$.
Faltings' theorem implies that the number of $\mb{Q}$-rational points of the inflectionary curve $\mc{C}_n:=(P_n=0)$ is finite; and in principle the Chabauty--Coleman method of $p$-adic integration \cite{KRZB, McP} may be used to compute $\mc{C}_n(\mb{Q})$. Implementation, however, is beyond the scope of the current paper.

\medskip
Over $\mb{F}_q$, on the other hand, Hasse--Weil theory \cite{Se} applies, and establishes that $$\#\mc{C}_n(\mb{F}_q)=q+1+e_{n,q},$$ where $e_{n,q}$ is a bounded error term.
More precisely, Deligne showed in \cite[Thm 8.1]{D} that $|e_{n,q}| \leq 2g\sqrt{q}$ for {\it smooth} curves of genus $g$. In our case, $\mc{C}_n$ is singular for every $n \geq 2$, but Aubry and Perret \cite{AP} showed that the same basic inequality holds, provided we interpret $g$ as the arithmetic genus.
Note that $\mc{C}_n$ is always a plane curve of degree $2n$. Accordingly, it is instructive to examine the distribution of the renormalized error terms $\wt{e}_{n,p}:=\frac{e_{n,p}}{(2n-1)(2n-2)\sqrt{p}}$ associated with the inflectionary curves $\mc{C}_n$ derived from either the Legendre or Weierstrass presentations as $p$ varies over all odd primes $p$, in the spirit of the Sato--Tate conjecture for elliptic curves proved in \cite{BLGHT}.\footnote{If we further assume that $p \equiv 2 \text{ (mod }3)$, then every element of $\mb{F}_p$ is a cube, and it then follows from Cardano's cubic formula that an elliptic curve $E_{(a,2)}$ admits a Legendre presentation if and only if its discriminant $\Delta=\Delta(a,2)$ is a square in $\mb{F}_p$.}

\medskip
Somewhat counter-intuitively, (the $\mb{F}_p$-rational behavior of) the inflectionary curves $\mc{C}_n$ depends strongly upon whether the underlying family of elliptic curves is of Legendre or Weierstrass type. Most strikingly, the delta-invariants of the singularities and the geometric genera of Legendre and Weierstrass inflectionary curves $\mc{C}_n$ are generally distinct. The singularities and geometric genera of Legendre inflectionary curves $\mc{C}_n$ were addressed in detail in \cite{CG1}; the most salient points (which are conjectural in general, but hold when $n$ is small) are that a) $\mc{C}_n$ has precisely three singularities, each of which is defined over $\mb{Q}$, and which are permuted by automorphisms of $\mc{C}_n$; and b) each singularity of $\mc{C}_n$ has delta-invariant $\lfloor \frac{(n-1)^2}{2} \rfloor+ n-1$. In particular, this implies that the Legendre inflectionary curve $\mc{C}_n$ is of geometric genus {\it zero} whenever $2 \leq n \leq 5$, provided $\mc{C}_n$ is irreducible.\footnote{This might seem to suggest that $\mc{C}_3$ is of geometric genus $10- 3(4)=-2$, an absurd conclusion; the explanation, however, is that $\mc{C}_3$ in fact has 3 irreducible components, so the geometric genus is two more than we expected.} Moreover, we expect that $\mc{C}_n$ is almost always irreducible; the unique exception to this rule of which we are aware is provided by $P_3$, which factors over $\mb{Q}$ as
\[
P_3= \frac{1}{16} (\kappa-x^2)(\kappa-2x+x^2)(\kappa-2x\kappa+x^2).
\]
In particular, the normalization of $\mc{C}_3$ is the disjoint union of three conics. 

\medskip
By contrast, the Weierstrass inflectionary curve $\mc{C}_2$ has geometric genus {\it one}. Indeed, $\mc{C}_2$ is an irreducible curve of arithmetic genus 3 cut out by $P_2=\frac{1}{8}(3x^4+6x^2a+ 24x- a^2)$; its projective completion to a curve in $\mb{P}^2_{x,a,z}$ (obtained by homogenizing $P_2$ with respect to $z$) has a unique singularity in $(0,1,0)$, namely the tacnode (of delta-invariant 2) with affine equation $3x^4+6x^2z-z^2=0$.

\begin{prop}\label{atomic_inflectionary_curves_over_Fq}
The values of the renormalized errors $\wt{e}_{2,p}$ are equidistributed with respect to the Sato--Tate measure on an elliptic curve with complex multiplication.
\end{prop}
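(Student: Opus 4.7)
The plan is to reduce to the classical Hecke--Deuring equidistribution theorem for CM elliptic curves by exhibiting the geometric-genus-one normalization of $\mc{C}_2$ as an elliptic curve of $j$-invariant zero. Viewing $P_2 = 3x^4 + 6ax^2 + 24x - a^2$ as a quadratic in $a$, the substitution $u := a - 3x^2$ yields the $\mb{Q}$-birational model $u^2 = 12x(x^3+2)$, a genus-one hyperelliptic model that carries the $\mb{Q}$-rational point $(0,0)$ (note that $12x(x^3+2)$ has no repeated roots). Successive substitutions $T := 1/x$, $W := u/x^2$, followed by $X := 6T$, $Y := 3W$, convert this into the Weierstrass form
\[
E : Y^2 = X^3 + 108,
\]
which has $j(E) = 0$ and therefore acquires CM by $\mb{Z}[\zeta_3]$ after base change to $\mb{Q}(\sqrt{-3})$.

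Next I would compare point counts. A routine Jacobian calculation shows that the tacnode at $(0{:}1{:}0)$ is the unique singularity of the projective closure $\bar{\mc{C}}_2$, so for every prime $p \geq 5$ of good reduction the normalization morphism $\pi : \bar E \to \bar{\mc{C}}_2$ is an isomorphism away from this point. The two analytic branches of the tacnode, cut out by $3x^4 + 6x^2 z - z^2 = 0$, have slopes $3 \pm 2\sqrt{3}$, so $\pi^{-1}((0{:}1{:}0))$ consists of two geometric points which are $\mb{F}_p$-rational precisely when $3$ is a square mod $p$, and are conjugate otherwise. Consequently the correction $\varepsilon_p := \#\bar{\mc{C}}_2(\mb{F}_p) - \#\bar E(\mb{F}_p)$ takes values in $\{-1, +1\}$, whence
\[
e_{2,p} = -a_p(E) + \varepsilon_p, \qquad a_p(E) := p + 1 - \#\bar E(\mb{F}_p).
\]
By Hecke's theorem, $\{a_p(E)/(2\sqrt{p})\}_p$ is equidistributed in $[-1,1]$ with respect to the CM measure $\tfrac{1}{2}\delta_0 + \tfrac{1}{2\pi\sqrt{1-t^2}}\,dt$; combined with the identity above, the sequence $\wt{e}_{2,p} = -a_p(E)/(6\sqrt{p}) + O(1/\sqrt{p})$ is equidistributed with respect to the pushforward of the CM measure under the scaling $t \mapsto -t/3$.

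The main technical obstacle is the bookkeeping in the comparison step: tracking how $\mathrm{Frob}_p$ permutes the two tacnode branches and verifying that no further singularities lurk in the non-affine charts of $\bar{\mc{C}}_2$. Both are short but needed computations, after which the conclusion is an immediate consequence of the Hecke--Deuring theorem.
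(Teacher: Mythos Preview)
Your argument is correct and follows the same overall strategy as the paper: show that the normalization of $\mc{C}_2$ is a CM elliptic curve with $j=0$, then compare $\mb{F}_p$-point counts and absorb the bounded correction coming from the tacnode into the renormalization. The difference lies in how the Weierstrass model is produced. The paper blows up the tacnode via $t=x^2/z$, realizes the normalization as a smooth intersection of two quadrics in $\mb{P}^3$, and then applies the standard $\det(Q_1 x + Q_2)$ recipe to obtain $y^2=-\tfrac14 x^3+1728$. Your completion of the square in $a$, yielding $u^2=12x(x^3+2)$ and then $Y^2=X^3+108$, is more direct and entirely elementary; note that your model and the paper's are $\mb{Q}$-isomorphic via $(x,y)\mapsto(-4X,4Y)$. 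Your identification of the two branch coefficients $3\pm 2\sqrt{3}$ likewise matches the paper's computation that the fiber over the tacnode is cut out by $3t^2+6t-1=0$, whose discriminant is $48$; in both cases the correction is $-\bigl(\tfrac{3}{p}\bigr)$. Your explicit identification of the limiting measure as the pushforward of the CM Sato--Tate measure under $t\mapsto -t/3$ is a welcome sharpening of the paper's somewhat informal statement. The only items you flag as remaining---uniqueness of the singularity and the Frobenius action on the branches---are exactly what the paper also takes for granted or computes in one line, so nothing is missing.
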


\begin{figure}
\includegraphics[scale=0.6]{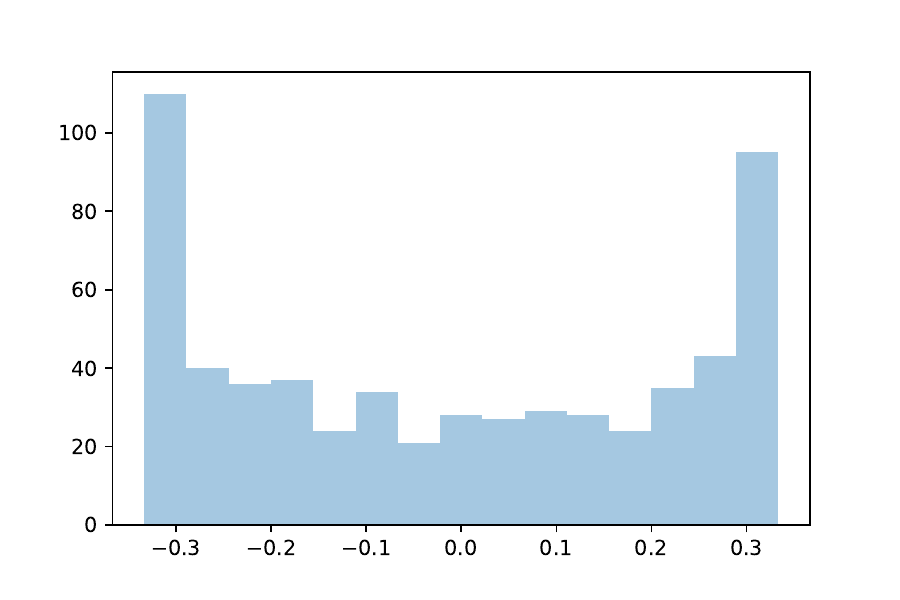}
\caption{Distribution of renormalized errors for the Weierstrass inflectionary curve $\mc{C}_2$.}
\label{sato-tate_histogram}
\end{figure}

See Figure~\ref{sato-tate_histogram} for a graphical representation of the renormalized errors $\wt{e}_{2,p}$ as $p$ varies in the range $p\leq 10000$ and $p$ splits in $\mathbb{Q}(\sqrt{-3})$. 

\begin{proof} To verify Proposition~\ref{atomic_inflectionary_curves_over_Fq}, we will show that the normalization $\wt{\mc{C}}_2$ of $\mc{C}_2$ is an elliptic curve with complex multiplication over $\mathbb{Q}(\sqrt{-3})$. Indeed, we start in $\mb{A}^2_{x,z}$, where $\mc{C}_2$ is given by the equation $\wt{P}_2:=3x^4+ 6x^2z+ 24xz^3-z^2=0$, and the tacnode is supported in the origin. Introducing an auxiliary (weighted) blow-up variable $t= \frac{x^2}{z}$, we can rewrite the pullback of $\wt{P}_2$ to $\mb{Q}[x,z,\frac{1}{z},t]$ as the system of equations $Q_1:=tz-x^2=0, Q_2:=3t^2+6t+24xz-1=0$, whose homogenization with respect to $a$ gives an affine presentation for $\wt{\mc{C}}_2$ as the intersection of quadrics in $\mb{P}^3_{x,a,z,t}$. There is a well-known recipe for converting a space cubic of the latter type to an isomorphic plane cubic; see, e.g., \cite[\S 1.4]{Pr}. Namely, identifying $Q_1$ and $Q_2$ with the $4 \times 4$ bilinear forms to which they correspond, the plane cubic presentation is $y^2= f(x)$, where $f(x):=\det(Q_1 x+ Q_2)$. In our case, we find that $f(x)=-\frac{1}{4}x^3+ 1728$. The corresponding elliptic curve $E$ has complex multiplication over $\mathbb{Q}(\sqrt{-3})$.

\medskip
Now the normalization map $\wt{\mc{C}}_2\to\mc{C}_2$ is an isomorphism over $\mb{Q}$ when restricted to the complement of the point $(0:1:0)\in\mc{C}_2$. The fiber above the latter point consists of those points $(0:1:0:t)\in\wt{\mc{C}}_2$ such that $3t^2 + 6t - 1 = 0$. It follows that for every primes $p>3$, we have $$\#\mc{C}_2(\mb{F}_p) = \#\wt{\mc{C}}_2(\mb{F}_p) - \left(\dfrac{3}{p}\right) = \#E(\mb{F}_p) - \left(\dfrac{3}{p}\right).$$
Since the Legendre symbol only takes the values $\pm 1$, it becomes negligible when renormalizing the error terms $\wt{e}_{2,p}$. Therefore the error terms of $\mc{C}_2$ obey the same distribution as those of $E$.
\end{proof}

\medskip
This is somewhat mysterious, as $\mc{C}_2$ does not itself parameterize ($x$-coordinates of) inflection points of linear series on elliptic curves\footnote{Indeed, because $n=g+2$, those atomic inflectionary curves $\mc{C}_n$ that parameterize inflection points of linear series on elliptic curves satisfy $n \geq 3$.}. It would be interesting to have a modular interpretation of $\mc{C}_2$ and its normalization.

\section*{Acknowledgements}
Many thanks are due to Jesse Kass, Matthias Wendt and Kirsten Wickelgren for explanations, helpful guidance, and patience. This project was born out of a suggestion of Kirsten's following a seminar talk by the first author in September 2018. We are grateful to the other participants of the 2019 Arizona Winter School, especially Hannah Larson and Isabel Vogt, for their inspiring ideas; and to the anonymous referees, whose comments and queries have led to an improved exposition. Finally, the first author would like to thank Indranil Biswas and Cristhian Garay L\'opez, with whom he collaborated on the problems that lay the groundwork for the current paper.

\vspace{\baselineskip}


\end{document}